\date{\today}
\newtheorem{thm}[subsection]{Theorem}
\newtheorem{prop}[subsection]{Proposition}
\newtheorem{cor}[subsection]{Corollary}
\newtheorem{lemma}[subsection]{Lemma}
\newtheorem*{mainthm}{Theorem}
\theoremstyle{definition}
\newtheorem{definition}[subsection]{Definition}
\newtheorem{example}[subsection]{Example}
\numberwithin{equation}{section}
\newcommand{\powser}[1]{[\![#1]\!]}
\newcommand{\G}{\mathbb{G}}
\newcommand{\F}{\mathbb{F}}
\newcommand{\A}{\hat{\mathbb{A}}^{1}}
\newcommand{\Sect}{\mathcal{O}}
\newcommand{\Q}{\mathbb{Q}}
\newcommand{\Po}{\mathbb{P}}
\newcommand{\C}{\mathbb{C}}
\newcommand{\Z}{\mathbb{Z}}
\newcommand{\N}{\mathbb{N}}
\newcommand{\QZ}[1]{(\Q_p/\Z_p)^{#1}}
\newcommand{\Zp}[1]{\Z/p^{#1}}
\newcommand{\m}{\mathfrak{m}}
\newcommand{\n}{\mathfrak{n}}
\newcommand{\al}{\alpha}
\newcommand{\lra}[1]{\overset{#1}{\longrightarrow}}
\newcommand{\Prod}[1]{\underset{#1}{\prod}}
\newcommand{\Oplus}[1]{\underset{#1}{\bigoplus}}
\newcommand{\um}{\underline{m}}
\newcommand{\ka}{\kappa}
\newcommand{\Gu}{\G_{u}}
\newcommand{\Ralg}{R\text{-alg}}
\DeclareMathOperator{\Aut}{Aut}
\DeclareMathOperator{\im}{im}
\DeclareMathOperator{\Hom}{Hom}
\DeclareMathOperator{\colim}{colim}
\DeclareMathOperator{\Isog}{Isog}
\DeclareMathOperator{\Spec}{Spec}
\DeclareMathOperator{\Spf}{Spf}
\DeclareMathOperator{\Sub}{Sub}
\DeclareMathOperator{\Div}{Div}
\DeclareMathOperator{\Level}{Level}
\DeclareMathOperator{\Sum}{Sum}
\DeclareMathOperator{\Tr}{Tr}
\DeclareMathOperator{\tr}{tr}
\DeclareMathOperator{\GL}{GL}
\DeclareMathOperator{\Comprings}{CompRings}
\DeclareMathOperator{\LT}{LT}
\DeclareMathOperator{\Cl}{Cl}
\DeclareMathOperator{\Groupoids}{Groupoids}
\DeclareMathOperator{\Frob}{Frob}
\DeclareMathOperator{\FGL}{FGL}
\DeclareMathOperator{\FG}{FG}
\DeclareMathOperator{\lat}{\mathbb{L}}
\DeclareMathOperator{\Gal}{Gal}
\DeclareMathOperator{\trans}{trans}
\DeclareMathOperator{\latpk}{\lat \mkern-3mu / \mkern-3mu \lat_{p^k}}
\DeclareMathOperator{\qz}{\mathbb{T}}
\DeclareMathOperator{\qzpk}{\qz[p^k]}
\DeclareMathOperator{\id}{id}
\newcommand{\upperRomannumeral}[1]{\uppercase\expandafter{\romannumeral#1}}
\begin{document}
\title{Lubin-Tate Theory, Character Theory, and Power Operations}

\author{Nathaniel Stapleton}
\address{University of Kentucky, Lexington, KY}
\email{nat.j.stapleton@gmail.com}

\begin{abstract}    
This expository paper introduces several ideas in chromatic homotopy theory around Morava's extraordinary $E$-theories. In particular, we construct various moduli problems closely related to Lubin-Tate deformation theory and study their symmetries. These symmetries are then used in conjunction with the Hopkins-Kuhn-Ravenel character theory to provide formulas for the power operations and the stabilizer group action on the $E$-cohomology of a finite group. 
\end{abstract}

\maketitle


\section{Introduction} 
Chromatic homotopy theory decomposes the category of spectra at a prime $p$ into a collection of categories according to certain periodicities. There is one of these categories for each natural number $n$ and it is called the $K(n)$-local category. For $n>1$, the objects in the $K(n)$-local category are often quite difficult to understand computationally. When $n=2$, significant progress has been made but above $n=2$ many basic computational questions are open. Even the most well-behaved ring spectra in the $K(n)$-local category, the height $n$ Morava $E$-theories, still hold plenty of mysteries. In order to understand $E$-cohomology, Hopkins, Kuhn, and Ravenel constructed a character map for each $E$-theory landing in a form of rational cohomology. They proved that the codomain of their character map serves as an approximation to $E$-cohomology in a precise sense. It turns out that many of the deep formal properties of the $K(n)$-local category, when specialized to the Morava $E$-theories, can be expressed in terms of simple formulas or relations satisfied by simple formulas on the codomain of the character map. This is intriguing for several reasons: The codomain of the character map is not $K(n)$-local, it is a rational approximation to $E$-cohomology so it removes all of the torsion from $E$-cohomology. The codomain of the character map for the $E$-cohomology of a finite group is a simple generalization of the ring of class functions on the group. Thus, in this case, these deep properties of the $K(n)$-local category are reflected in combinatorial and group theoretic properties of certain types of conjugacy classes in the group. Finally, a certain $\Q$-algebra, known as the Drinfeld ring of infinite level structures, arises from topological considerations as the coefficients of the codomain of the character map. This $\Q$-algebra plays an important role in local Langlands and the group actions that feed into local Langlands are closely related to fundamental properties of the Morava $E$-theories.

The primary purpose of this article is to explain the relationship between the power operations 
\[
P_m \colon E^0(BG) \to E^0(BG \times B\Sigma_m)
\]
on the Morava $E$-cohomology of finite groups and Hopkins-Kuhn-Ravenel character theory. Morava $E$-theory is built out of the Lubin-Tate ring associated to a finite height formal group over a perfect field of characteristic $p$ by using the Landweber exact functor theorem. Lubin-Tate theory plays an important role in local arithmetic geometry and so it is not too surprising that other important objects from arithmetic geometry, such as the Drinfeld ring, that are closely related to the Lubin-Tate ring arise in the construction of the character map. The power operations on Morava $E$-theory are a consequence of the Goerss-Hopkins-Miller theorem that implies that each Morava $E$-theory spectrum has an essentially unique $E_{\infty}$-ring structure. However, this multiplicative structure plays no role in the construction of the character map, so it should be surprising that the two structures interact as well as they do. The first indicator that this might be the case is the fact that the Drinfeld ring at infinite level picks up extra symmetry that does not exist at finite level. This extra symmetry turns out to be the key to understanding the relationship between character theory and power operations.

To give a careful statement of the results, we need some setup. Fix a prime $p$ and a height $n$ formal group $\F$ over $\ka$, a perfect field of characteristic $p$. Associated to this data is an $E_{\infty}$-ring spectrum $E = E(\F, \ka)$ known as Morava $E$-theory. The coefficient ring $E^0$ is the Lubin-Tate ring. It has the astounding property that the automorphism group of $E$ as an $E_{\infty}$-ring spectrum is the discrete extended Morava stabilizer group $\Aut(\F/\ka) \rtimes \Gal(\ka/\F_p)$. The homotopy fixed points of $E$ for this action by the extended stabilizer group are precisely the $K(n)$-local sphere - the unit in the category of $K(n)$-local spectra. Therefore, a calculation of the $E$-cohomology of a space is the starting point for the calculation the $K(n)$-local cohomotopy of the space. 

Of particular interest is the $E$-cohomology of finite groups. The reason for this is several fold. Morava $E$-theory is a generalization of $p$-adic $K$-theory to higher height. Thus the $E$-cohomology of a finite group is a natural generalization of the representation ring (at the prime $p$) of the group. Further, there is an intriguing relationship between the $E$-cohomology of finite groups and formal algebraic geometry. This is exemplified in Strickland's theorem, a theorem which produces a canonical isomorphism between a quotient of the $E$-cohomology of the symmetric group and the ring of functions on the scheme that represents subgroups of the universal deformation of $\F$. Finally, it seems reasonable to hope that a good understanding of the these rings could lead to insight into the relationship between chromatic homotopy theory and geometry.

Hopkins, Kuhn, and Ravenel made significant progress towards understanding these rings by constructing an analogue of the character map in representation theory for these rings. Recall that the character map is a map of commutative rings from the representation ring of $G$ into class functions on $G$ taking values in the complex numbers. Hopkins, Kuhn, and Ravenel constructed a generalization of the ring of class functions on $G$, called $\Cl_n(G,C_0)$. The ring $C_0$ is an important object in arithmetic geometry that appears here because of it carries the universal isomorphism of $p$-divisible groups between $\QZ{n}$ and the universal deformation of $\F$. Using the ring of generalized class functions, they constructed a character map
\[
\chi \colon E^0(BG) \to \Cl_n(G,C_0).
\]
The ring $\Cl_n(G,C_0)$ comes equipped with a natural action of $\GL_n(\Z_p)$. Hopkins, Kuhn, and Ravenel prove that the character induces an isomorphism
\[
\Q \otimes E^0(BG) \cong  \Cl_n(G,C_0)^{\GL_n(\Z_p)}.
\]
Thus $\Q \otimes E^0(BG)$ admits a purely algebraic description.

The $E_{\infty}$-ring structure on $E$ gives rise to power operations. For each $m \geq 0$, these are multiplicative non-additive natural transformations
\[
P_m \colon E^0(X) \to E^0(X \times B\Sigma_m). 
\]
When $X = BG$, one might hope for an operation on generalized class functions $\Cl_n(G,C_0) \to \Cl_n(G \times \Sigma_m,C_0)$ compatible with $P_m$ through the character map.


The purpose of this article is to give a formula for such a map. To describe the formula we need to introduce some more notation. Let $\lat = \Z_{p}^n$ and let $\qz = \lat^*$, the Pontryagin dual of $\lat$, so that $\qz$ is abstractly isomorphic to $\QZ{n}$. Let $\Aut(\qz)$ be the automorphisms of $\qz$ and let $\Isog(\qz)$ be the monoid of endoisogenies (endomorphisms with finite kernel) of $\qz$. The group $\Aut(\qz)$ is anti-isomorphic to $\GL_n(\Z_p)$. There is a right action of $\Isog(\qz)$ by ring maps on $C_0$. Given a finite group $G$, let $\hom(\lat, G)$ be the set of continuous homomorphisms from $\lat$ to $G$. This set is in bijective correspondence with $n$-tuples of pairwise commuting $p$-power order elements in $G$. The group $G$ acts $\hom(\lat,G)$ by conjugation and we will write $\hom(\lat,G)_{/\sim}$ for the set of conjugacy classes. The ring of generalized class functions $\Cl_n(G,C_0)$ is just the ring of $C_0$ valued functions on $\hom(\lat,G)_{/\sim}$. There is a canonical bijection
\[
\hom(\lat,\Sigma_m)_{/\sim} \cong \Sum_m(\qz),
\]
where 
\[
\Sum_m(\qz) = \{\oplus_i H_i| H_i \subset \qz \text{ and } \sum_i|H_i|=m\},
\]
formal sums of subgroups of $\qz$ such that the sum of the orders is precisely $m$. Finally let $\Sub(\qz)$ be the set of finite subgroups of $\qz$ and let $\pi \colon \Isog(\qz) \to \Sub(\qz)$ be the map sending an isogeny to its kernel. Choose a section $\phi$ of $\pi$ so that, for $H \subset \qz$, $\phi_H$ is an isogeny with kernel $H$.

Now we are prepared to describe the formula for the power operation on class functions
\[
P_{m}^{\phi} \colon \Cl_n(G,C_0) \to \Cl_n(G \times \Sigma_m,C_0),
\]
which depends on our choice of section $\phi$. Because of the canonical bijection described above, an element in $\hom(\lat, G \times \Sigma_m)_{/\sim}$ may be described as a pair $([\al],\oplus_i H_i)$, where $\al \colon \lat \to G$. Given $f \in \Cl_n(G,C_0)$, we define
\[
P_{m}^{\phi}(f)([\al], \oplus_i H_i) = \Prod{i} f([\al \phi_{H_i}^*]) \phi_{H_i} \in C_0,
\]
where $\phi_{H_i}^* \colon \lat \to \lat$ is the Pontryagin dual of $\phi_{H_i}$. Using this construction, we prove the following theorem:
\begin{mainthm}
For all section $\phi \in \Gamma(\Sub(\qz), \Isog(\qz))$, there is a commutative diagram
\[
\xymatrix{E^0(BG) \ar[r]^-{P_m} \ar[d]_-{\chi} & E^0(BG \times B\Sigma_m) \ar[d]^-{\chi} \\ \Cl_n(G,C_0) \ar[r]^-{P_{m}^{\phi}} & \Cl_n(G \times \Sigma_m,C_0),}
\]
where the vertical arrows are the Hopkins-Kuhn-Ravenel character map.
\end{mainthm}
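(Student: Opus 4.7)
The strategy is to prove commutativity by direct evaluation. Fix $x \in E^0(BG)$ and a conjugacy class $([\al], \oplus_i H_i) \in \hom(\lat, G \times \Sigma_m)_{/\sim}$. I would use the pointwise description of the character map: for a homomorphism $\gamma \colon \lat \to K$, $\chi(y)([\gamma])$ is the image of $y$ under the composite
\[
E^0(BK) \lra{(B\gamma)^*} E^0(B\lat) \to C_0 \otimes_{E^0} E^0(B\lat) \to C_0,
\]
where the final map is induced by the universal isomorphism between $\qz$ and the universal deformation of $\F$ carried by $C_0$. The problem then reduces to computing the image of $P_m(x)$ along the analogous composite determined by the homomorphism $\beta = (\al, \pi) \colon \lat \to G \times \Sigma_m$, where, up to conjugacy, $\pi$ presents $\{1,\ldots,m\}$ as the $\lat$-set $\coprod_i \lat/H_i^\perp$ (with $H_i^\perp \subset \lat$ the Pontryagin annihilator of $H_i \subset \qz$).

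Next I would invoke the external/wreath product formula for the total power operation: because the $\lat$-set decomposes into orbits, the map $B\beta$ factors through a product of orbit-wise classifying maps, and $(B\beta)^* P_m(x)$ decomposes, after passing to $C_0$, as a product over $i$ of contributions coming from the individual orbits. This reduces the problem to the transitive case, in which one must evaluate $\chi(P_{|H|}(x))$ on the homomorphism $\lat \to G \times \Sigma_{|H|}$ with $\Sigma_{|H|}$-component corresponding to the transitive $\lat$-set $\lat/H^\perp$ and $G$-component equal to $\al$ restricted along $H^\perp \hookrightarrow \lat$ in the appropriate sense.

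The transitive case is the heart of the argument and where the choice of section $\phi$ enters; it will be the main obstacle. Two identifications must be matched. On the $B\Sigma_{|H|}$ side, Strickland's theorem identifies the relevant quotient of $E^0(B\Sigma_{|H|})$ with functions on the scheme of order $|H|$ subgroups of the universal deformation of $\F$; pulling back along the orbit classifying map and inserting the universal isomorphism $\qz \cong \G$ over $C_0$ extracts precisely the subgroup $H \subset \qz$, and the $\phi$-coordinate of this subgroup scheme yields the scalar $\phi_H \in C_0$. On the $BG$ side, restricting $P_{|H|}(x)$ to the transitive orbit factors the pullback $(B\al)^*$ through the inclusion of the stabilizer, which under the universal isomorphism together with $\phi$ becomes precisely the precomposition of $\al$ with the Pontryagin dual isogeny $\phi_H^* \colon \lat \to \lat$.

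The delicate point is to verify that these two pieces combine to give exactly the factor $f([\al \phi_H^*]) \phi_H$ rather than, say, differing by a correction term depending on the section $\phi$. This requires carefully tracking the compatibility between the $\Isog(\qz)$-action on $C_0$ and the dual isogeny action on $\hom(\lat, G)$, and checking that the choice $\phi$ of section washes out consistently on both sides of Strickland's identification. Once this transitive identification is in place, multiplying the contributions over the orbits $H_i$ yields the product formula defining $P_m^{\phi}$, which completes the proof.
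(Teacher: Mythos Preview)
Your high-level strategy---evaluate pointwise, decompose into orbits, and handle the transitive case---is sound and close in spirit to the paper's argument. But the transitive case, which you correctly identify as the heart, is where your sketch has genuine gaps.

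First, two misconceptions. The element $\phi_H$ is not a scalar in $C_0$; it is an isogeny of $\qz$ and acts on $C_0$ as a ring endomorphism. The factor $f([\al\phi_H^*])\phi_H$ means: evaluate $f$ at $[\al\phi_H^*]$ to get an element of $C_0$, then apply the ring map $\phi_H$ to it. Your phrasing ``the $\phi$-coordinate of this subgroup scheme yields the scalar $\phi_H$'' suggests you have not understood where the formula comes from. Second, there is no ``inclusion of the stabilizer'' on the $BG$ side for $P_m$. You appear to be importing the structure of the total power operation on $G\wr\Sigma_m$, where conjugacy classes do involve maps out of stabilizer lattices $\lat_H$; but for $G\times\Sigma_m$ the map $\al\colon\lat\to G$ is simply $\al$ and does not factor through anything. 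The appearance of $\al\phi_H^*$ in the formula comes entirely from the action of $\Isog(\qz)$ on the moduli problem $\Level(\qz,\Gu)$, not from any group-theoretic restriction.

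The substantive gap is this: Strickland's theorem identifies $E^0(B\Sigma_{p^k})/I_{\tr}$, but it tells you nothing about how the power operation interacts with it. The input you actually need is the theorem of Ando--Hopkins--Strickland, which describes $P_{p^k}/I_{\tr}\colon E^0(BA)\to E^0(BA)\otimes_{E^0}E^0(B\Sigma_{p^k})/I_{\tr}$ as the ring of functions on the map of moduli problems sending $(H\subset\G,\,A^*\to\G,\,i,\,\tau)$ to $(A^*\to\G\to\G/H,\,i\sigma^k,\,\tau/H)$. This is only available when the group is abelian, because only then does $E^0(BA)$ corepresent a moduli problem. Thus a reduction from $G$ to finite abelian groups is essential, and your sketch omits it. The paper accomplishes this via an injection $\Cl_n(G,C_0)\hookrightarrow\prod_{A\subset G}\Cl_n(A,C_0)$ together with naturality of $P_{p^k}$; one could equivalently note that evaluation at $[\al]$ factors through $E^0(B\,\mathrm{im}\,\al)$ and apply naturality pointwise, but either way the step must be made explicit.

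For comparison, the paper organizes the argument as three reductions: first to $m=p^k$ via an injection induced by $\prod_j\Sigma_{p^j}^{a_j}\subset\Sigma_m$; then to $G$ abelian as above; then, for abelian $A$, an induction on $k$ using an injection $\Cl_n(\Sigma_{p^k},C_0)\hookrightarrow\Cl_n(\Sigma_{p^{k-1}}^{\times p},C_0)\times\Cl_n(\Sigma_{p^k},C_0)/I_{\tr}$, with the Ando--Hopkins--Strickland description handling the quotient factor. Your orbit decomposition is a reasonable alternative to the first and third of these reductions combined, but it does not bypass the second, and it still needs Ando--Hopkins--Strickland rather than Strickland alone.
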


Several ingredients go into the proof of this theorem. The ring $C_0$ is the rationalization of the Drinfeld ring at infinite level, it is closely related to a certain moduli problem over Lubin-Tate space. The first ingredient is an understanding of the symmetries of this moduli problem. These symmetries play an important role in the local Jacquet-Langlands correspondence, do not go into the proof of the Goerss-Hopkins-Miller theorem, and show up here because of their close relationship to a result of Ando, Hopkins, and Strickland. The second ingredient is the result of Ando, Hopkins, and Strickland, which gives an algebro-geometric description of a simplification of $P_m$ applied to abelian groups. The third ingredient is the fact that, rationally, the $E$-cohomology of finite groups can be detected by the $E$-cohomology of the abelian subgroups. This is a consequence of Hopkins, Kuhn, Ravenel character theory. 

Recall that $\GL_n(\Z_p)$ acts on $\Cl_n(G,C_0)$. We prove that the choice of section $\phi$ disappears after restricting to the $\GL_n(\Z_p)$-invariant class functions. 
\begin{mainthm}
For all $\phi \in \Gamma(\Sub(\qz), \Isog(\qz))$, the power operation $P_{m}^{\phi}$ sends $\GL_n(\Z_p)$-invariant class functions to $\GL_n(\Z_p)$-invariant class functions and the resulting operation
\[
\Cl_n(G,C_0)^{\GL_n(\Z_p)} \to \Cl_n(G \times \Sigma_m,C_0)^{\GL_n(\Z_p)}
\]
is independent of the choice of $\phi$.
\end{mainthm}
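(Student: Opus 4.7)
The plan is to prove independence of $\phi$ on the $\GL_n(\Z_p)$-invariants first, and then bootstrap invariance from independence. Two preliminary ingredients are needed. First, the map $\pi \colon \Isog(\qz) \to \Sub(\qz)$ is a principal $\Aut(\qz)$-bundle in the sense that any two isogenies with kernel $H$ differ uniquely by an automorphism on the target side; hence for any two sections we have $\phi'_H = u_H \circ \phi_H$ for a unique $u_H \in \Aut(\qz)$. Second, the $\GL_n(\Z_p)$-action on $\Cl_n(G,C_0)$, transported through the anti-isomorphism $\GL_n(\Z_p) \cong \Aut(\qz)^{\mathrm{op}}$, acts on $\al \in \hom(\lat,G)$ by precomposition with the Pontryagin dual $u^*$ and on values in $C_0$ via $\Aut(\qz) \subset \Isog(\qz)$; so $\GL_n(\Z_p)$-invariance of $f$ is exactly the identity $f([\beta u^*]) \cdot u = f([\beta])$ for all $\beta \in \hom(\lat,G)$ and $u \in \Aut(\qz)$.

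For independence, I would substitute $\phi'_{H_i} = u_{H_i} \circ \phi_{H_i}$ and $(\phi'_{H_i})^* = \phi_{H_i}^* \circ u_{H_i}^*$ into the formula for $P_m^{\phi'}(f)([\al], \oplus_i H_i)$. Because the right $\Isog(\qz)$-action on $C_0$ respects composition, each factor regroups as $\bigl(f([\al \phi_{H_i}^* u_{H_i}^*]) \cdot u_{H_i}\bigr) \cdot \phi_{H_i}$, and the invariance identity applied with $u = u_{H_i}$ collapses it to $f([\al \phi_{H_i}^*]) \cdot \phi_{H_i}$, recovering $P_m^\phi(f)([\al], \oplus_i H_i)$.

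To then deduce $\GL_n(\Z_p)$-invariance of $P_m^\phi(f)$, I would appeal to the independence just proved rather than recomputing. For $g \in \Aut(\qz)$, the rule $\phi'_H := \phi_{gH} \circ g$ defines another section of $\pi$, since it has kernel $H$. By independence, $P_m^{\phi'}(f) = P_m^\phi(f)$. Expanding the left-hand side via the formula, and using that $g \in \Aut(\qz) \subset \Isog(\qz)$ acts on $C_0$ as a ring map so that the trailing factors of $g$ may be pulled past the product, rewrites it as $P_m^\phi(f)([\al g^*], \oplus_i gH_i) \cdot g$. This last expression is precisely $(P_m^\phi(f) \cdot g)([\al], \oplus_i H_i)$ under the $\Aut(\qz)$-action on $\Cl_n(G \times \Sigma_m, C_0)$, so equating yields $P_m^\phi(f) \cdot g = P_m^\phi(f)$.

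The main obstacle is bookkeeping rather than substantive. Pontryagin duality $\qz = \lat^*$ exchanges left and right actions, the right action of $\Isog(\qz)$ on $C_0$ is by ring maps, and the composition convention in $\Isog(\qz)$ must be matched carefully to the right-action law $(x \cdot a) \cdot b = x \cdot (a \circ b)$; verifying that $f([\beta u^*]) \cdot u = f([\beta])$ really is what $\GL_n(\Z_p)$-invariance asserts in these conventions is where the care lies. Once pinned down, both halves of the theorem reduce to the same mechanism: the freedom in the choice of $\phi$, a fiberwise $\Aut(\qz)$-torsor, is exactly absorbed by the diagonal $\Aut(\qz)$-action on $f$ and on $C_0$.
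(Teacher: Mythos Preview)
Your proposal is correct, and the underlying mechanism is the same as the paper's: both arguments pivot on the fact that $\Isog(\qz) \to \Sub(\qz)$ is an $\Aut(\qz)$-torsor, together with the invariance identity $f([\beta u^*])\,u = f([\beta])$ for $u \in \Aut(\qz)$.

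The organizational difference is worth noting. The paper proves the two assertions in the opposite order and by two parallel direct calculations: first it shows $P_m^\phi(f)$ is $\Aut(\qz)$-invariant by writing $\phi_{\gamma H_i}\gamma = \sigma_i \phi_{H_i}$ for suitable $\sigma_i \in \Aut(\qz)$ and invoking invariance of $f$; then it shows independence by writing $\xi_H = \sigma_H \phi_H$ and repeating essentially the same manipulation. You instead do one direct calculation (independence), and then deduce invariance by observing that $\phi'_H := \phi_{gH}\circ g$ is itself a section, so that $P_m^{\phi'}(f) = P_m^\phi(f)$ unwinds to $(P_m^\phi(f))\cdot g = P_m^\phi(f)$. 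This is a genuine economy: your second step recycles the first rather than rerunning it, and it makes transparent why the two statements are really one phenomenon. The paper's order has the mild expository advantage that invariance is the more ``visible'' claim and independence then looks like a corollary, but mathematically the content is identical.
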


Both of these results are simplifications of the main results of \cite{cottpo}. By proving simpler statements, we are able to bypass work understanding conjugacy classes of $n$-tuples of commuting elements in wreath products and also the application of the generalization of Strickland's theorem in \cite{genstrickland}. This simplifies the argument and allows us to make use of more classical results in stable homotopy theory thereby easing the background required of the reader.

A second goal of this article is to describe an action of the stabilizer group on generalized class functions that is compatible with the action of the stabilizer group on $E^0(BG)$ through the character map. There is a natural action of $\Aut(\F/\ka)$ on $C_0$ and we show that the diagonal action of $\Aut(\F,\ka)$ on $\Cl_n(G,C_0)$ makes the character map into an $\Aut(\F/\ka)$-equivariant map. Further, we show that the diagonal action of the stabilizer group on $\Cl_n(G,C_0)$ commutes with the power operations $P_{m}^{\phi}$ exhibiting an algebraic analogue of the fact that $\Aut(\F/\ka)$ acts on $E$ by $E_{\infty}$-ring maps.

%
%

\subsection*{Acknowledgements}
Most of this article is based on \cite{cottpo}, joint work with Tobias Barthel. My understanding of this material is a result of our work together and it is a pleasure to thank him for being a great coauthor. I also offer my sincere thanks to Peter Bonventre and Tomer Schlank for their helpful comments.

\section{Notation and conventions}\label{sec:notation}
We will use $\hom$ to denote the set of morphisms between two objects in a category. We will capitalize the first letter, such as in $\Hom$, $\Level$, and $\Sub$, to denote functors. Given a (possibly formal) scheme $X$, we will write $\Sect_X$ to denote the ring of functions $\hom(X,\mathbb{A}^1)$ on $X$.

\section{Lubin-Tate theory}\label{LT}
In this section we recall the Lubin-Tate moduli problem and describe several well-behaved moduli problems that live over the Lubin-Tate moduli problem. 

We begin with a brief overview of the theory of ($1$-dimensional, commutative) formal groups over complete local rings. Let $\Comprings$ be the category of complete local rings. Objects in this category are local commutative rings $(R, \m)$ such that $R \cong \lim_{k} R/\m^k$ and maps are local homomorphisms of rings. We will denote the quotient map $R \to R/\m$ by $\pi$. Note that if $\ka$ is a field, then $(\ka, (0))$ is a complete local ring. Given a complete local ring $(R, \m)$, let $\Comprings_{R/}$ be the category of complete local $R$-algebras. The formal affine line over $R$, 
\[
\A_R \colon \Comprings_{R/} \to \text{Set}_*,
\]
is the functor to pointed sets corepresented by the complete local $R$-algebra $(R\powser{x}, \m+(x))$. Thus, for a complete local $R$-algebra $(S, \n)$,
\[
\A_R(S,\n) = \hom_{\Ralg}(R\powser{x},S) \cong \n,
\]
where $\hom_{\Ralg}(-,-)$ denotes the set of morphisms in $\Comprings_{R/}$ and the last isomorphism is the map that takes a morphism to the image of $x$ in $S$.

A formal group $\G$ over $R$ is a functor
\[
\G \colon \Comprings_{R/} \to \text{Abelian Groups}
\]
that is abstractly isomorphic to $\A_R$ when viewed as a functor to pointed sets. A coordinate on a formal group $\G$ over $R$ is a choice of isomorphism $\G \lra{\cong} \A_R$. Given a coordinate, the multiplication map
\[
\mu \colon \G \times \G \to \G
\]
gives rise to a map
\[
\mu^* \colon R\powser{x} \to R\powser{x_1, x_2}
\]
and $\mu^*(x) = x_1+_{\G}x_2$ is the formal group law associated to the coordinate. For $i \in \N$, we define the $i$-series to be the power series
\[
[i]_{\G}(x) = \overbrace{x +_{\G} \ldots +_{\G} x}^{i \text{ terms}}.
\]
Given a formal group over a complete local ring $R$ and a map of complete local rings $j \colon R \to S$, we will write $j^*\G$ for the restriction of the functor $\G$ to the category of complete local $S$-algebras.

Let $\ka$ be a perfect field of characteristic $p$. ``Perfect" means that the Frobenius $\sigma \colon \ka \to \ka$ is an isomorphism of fields. A formal group $\F$ over $\ka$ is said to be of height $n$ if, after choosing a coordinate, the associated formal group law $x +_{\F} y$ has the property that the $p$-series $[p]_{\F}(x)$ has first term $x^{p^n}$. 

Fix a formal group $\F$ over $\ka$ of height $n$. All of the constructions in this section will depend on this choice. Associated to $\F / \ka$ is a moduli problem 
\[
\LT \colon \Comprings \to \Groupoids
\]
studied by Lubin and Tate in \cite{lubintate}. We will take a coordinate-free approach to this moduli problem that is closely related to \cite[Section 6]{subgroups}. The functor $\LT$ takes values in groupoids. Next we will describe the objects and morphisms of these groupoids.


A deformation of $\F / \ka$ to a complete local ring $(R,\m)$ is a triple of data
\[
(\G / R, i \colon \ka \to R/\m, \tau: \pi^* \G \lra{\cong} i^* \F),
\]
where $\G$ is a formal group over $R$, $i \colon \ka \to R/\m$ is a map of fields, and $\tau$ is an isomorphism of formal groups over $R/\m$. We will often use shorthand and refer to a deformation as a triple $(\G, i, \tau)$. The deformations of $\F / \ka$ form the objects of $\LT(R,\m)$. The morphisms of $\LT(R,\m)$ are known as $\star$-isomorphisms. A $\star$-isomorphism $\delta$ between two deformations $(\G, i, \tau)$ and $(\G', i', \tau')$ only exists when $i = i'$ and then it is an isomorphism of formal groups $\delta \colon \G \lra{\cong} \G'$ compatible with the isomorphisms $\tau$ and $\tau'$ in the sense that the square
\[
\xymatrix{\pi^* \G \ar[r]^{\tau} \ar[d]_{\pi^* \delta} & i^* \F  \ar[d]^{\id} \\ \pi^* \G' \ar[r]^{\tau'} & i^* \F} 
\]
commutes.

Given a map of complete local rings $j \colon (R,\m) \to (S,\n)$, there is an induced map of groupoids $\LT(R,\m) \to \LT(S,\n)$ defined by sending
\[
(\G, i, \tau) \mapsto (j^* \G, \ka \lra{i} R/\m \lra{j/\m} S/\n, (j/\m)^* \tau). 
\]
This makes $\LT$ into a functor. Let $W(\ka)$ be the ring of $p$-typical Witt vectors on $\ka$. This is a complete local ring of characteristic $0$ with the property that $W(\ka)/p \cong \ka$. When $\ka = \F_p$, $W(\ka) = \Z_p$ (the $p$-adic integers). Now we are prepared to state the Lubin-Tate theorem. This theorem was proved in \cite{lubintate}, the homotopy theorist may be interested in reading \cite[Chapters 4 and 5]{Rezk-Notes} or \cite[Lecture 21]{luriechromatic}.

\begin{thm}\cite{lubintate}  \label{lubintate}
The functor $\LT$ is corepresented by a complete local ring $\Sect_{\LT}$ non-canonically isomorphic to $W(\ka)\powser{u_1, \ldots, u_{n-1}}$ called the Lubin-Tate ring.
\end{thm}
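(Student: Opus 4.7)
The plan is to prove the theorem in two stages. First, establish rigidity: the groupoid $\LT(R,\m)$ is equivalent to a set because $\star$-isomorphisms between deformations, when they exist, are unique. Second, classify the resulting set-valued functor by normal forms for the $p$-series, which will produce the ring $W(\ka)\powser{u_1,\ldots,u_{n-1}}$.

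For rigidity, after choosing coordinates I would show that any endomorphism $\delta\colon\G\to\G$ of a deformation that reduces to the identity modulo $\m$ is already the identity. Writing $\delta(x)=x+\epsilon(x)$ with $\epsilon\in\m^k\cdot R\powser{x}$, the homomorphism identity $\delta(x+_{\G}y)=\delta(x)+_{\G}\delta(y)$ reduced modulo $\m^{k+1}$ constrains the shape of the leading term of $\epsilon$, while the commutation $[p]_{\G}\circ\delta=\delta\circ[p]_{\G}$ uses the height assumption---that $\pi^*[p]_{\G}$ has leading term a unit times $x^{p^n}$---to force that term to vanish. Induction on $k$ then gives $\epsilon=0$, which in turn implies that any two $\star$-isomorphisms between a fixed pair of deformations agree.

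Once rigidity is in place, I would pick a coordinate on $\F$ to present it by a formal group law $F\in\ka\powser{x,y}$, and show that every deformation $(\G,i,\tau)$ is $\star$-isomorphic to one in which $\G$ is given by a formal group law $G\in R\powser{x,y}$ lifting $i_*F$ with $\tau=\id$; this reduces to lifting a coordinate along the surjection $R\twoheadrightarrow R/\m$. A formal Weierstrass preparation argument applied to $[p]_G(x)$, combined with a further $\star$-isomorphism, normalizes the $p$-series into the form
\[
[p]_G(x)=px+_{G}u_1x^p+_{G}\cdots+_{G}u_{n-1}x^{p^{n-1}}+_{G}w(x)x^{p^n}
\]
with $u_i\in\m$ and $w\in R\powser{x}$ a unit; rigidity guarantees that the parameters $u_i$ are well-defined functions of the $\star$-isomorphism class of the deformation.

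Finally, recording these parameters together with the residue field datum $i\colon\ka\to R/\m$---which by the universal property of Witt vectors for perfect fields lifts uniquely to a map $W(\ka)\to R$---defines a natural transformation from $\LT$ to the functor corepresented by $W(\ka)\powser{u_1,\ldots,u_{n-1}}$. To produce the inverse, I would construct, for each choice of values $u_i\in\m$ and each map $W(\ka)\to R$, an explicit formal group law $G\in R\powser{x,y}$ with the prescribed $p$-series by successive approximation modulo $\m^k$; the obstructions at each stage are linear equations in the next layer of unknowns, solvable because of the explicit shape imposed on $[p]_G$. The main obstacle I expect is exactly this construction step: one must verify both that the approximations can be carried out to all orders (which is where finite height $n$ enters decisively) and that different auxiliary choices produce $\star$-isomorphic deformations, which in turn feeds back into the rigidity lemma as the key tool underwriting uniqueness.
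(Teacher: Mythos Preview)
The paper does not supply its own proof of this theorem: it states the result and directs the reader to the original Lubin--Tate paper, to Rezk's notes, and to Lurie's lectures. So there is no in-paper argument to compare your proposal against.

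That said, your outline follows the standard strategy one finds in those references: first establish rigidity (no nontrivial $\star$-automorphisms of a deformation), then use normal forms for the $p$-series to extract the parameters $u_1,\ldots,u_{n-1}$, with the residue-field map handled via the universal property of Witt vectors. The specific normalization you propose for $[p]_G$ matches what the paper later invokes in Section~\ref{etheory} when verifying Landweber exactness, citing \cite[Section 6]{subgroups}. Your self-identified obstacle---actually constructing a deformation with prescribed $p$-series parameters by successive approximation---is indeed where the work lies; in the literature this is typically organized via the cohomology of formal group laws (Lazard's symmetric $2$-cocycle calculations), which makes the obstruction and deformation spaces explicit and shows they vanish or are free in the relevant degrees. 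Your ``linear equations in the next layer'' is a correct intuition for this, but the cleanest execution packages it as a cohomology computation rather than an ad hoc induction.
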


Part of the content of this theorem is that, for any complete local ring $(R,\m)$, the groupoid $\LT(R,\m)$ is fine (ie. there is a unique isomorphism between any two objects that are isomorphic). It follows that $\Sect_{\LT}$ carries the universal deformation $(\Gu, \id_{\ka}, \id_{\F})$. This means that, given any deformation $(\G, i, \tau)$ over a complete local ring $R$ there exists a map $j \colon \Sect_{\LT} \to R$ such that $(j^*\Gu, j/\m, \id_{(j/\m)^* \F})$ is $\star$-isomorphic to $(\G, i, \tau)$.

It is possible to add various bells and whistles to the Lubin-Tate moduli problem. We will now explain three ways of doing this.

We construct a modification of the Lubin-Tate moduli problem depending on a finite abelian group $A$. If $\G$ is a formal group over a complete local ring $(R,\m)$, then a homomorphism $A \to \G$ over $R$ is just a map of abelian groups $A \to \G(R)$. We will write $\hom(A, \G(R))$ for this abelian group. Define 
\[
\Hom(A,\Gu) \colon \Comprings \to \Groupoids
\]
to be the functor with $\Hom(A,\Gu)(R,\m)$ equal to the groupoid with objects triples $(f \colon A \to \G, i, \tau)$, where $f$ is a homomorphism from $A$ to $\G$ over $R$ and $\G$, $i$ and $\tau$ are as before, and morphisms $\star$-isomorphisms that commute with this structure. In other words, a $\star$-isomorphism from $(f \colon A \to \G, i, \tau)$ to $(f' \colon A \to \G', i, \tau')$ is a $\star$-isomorphism $\delta$ from $(\G, i, \tau)$ to $(\G', i, \tau')$ such that the diagram
\[
\xymatrix{A \ar[r]^f \ar[rd]^{f'} & \G \ar[d]^{\delta} \\ & \G'}
\]
commutes.

The functor $\Hom(A,\Gu)$ lives over $\LT$. There is a forgetful natural transformation $\Hom(A,\Gu) \to \LT$ given by sending an object $(f \colon A \to \G, i, \tau)$ to $(\G, i, \tau)$. It follows from the definition that the forgetful functor $\Hom(A,\Gu)(R,\m) \to \LT(R,\m)$ is faithful. Thus $\Hom(A,\Gu)(R,\m)$ is a fine groupoid. 


\begin{cor} \label{HomLT}
The functor $\Hom(A,\Gu)$ is corepresentable by a complete local ring $\Sect_{\Hom(A,\Gu)}$ that is finitely generated and free as a module over $\Sect_{\LT}$.
\end{cor}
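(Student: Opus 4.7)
The plan is to construct the corepresenting ring by explicit presentation: adjoin torsion variables for the cyclic factors of $A$ to the Lubin-Tate ring $\Sect_{\LT}$. First I would reduce to the $p$-primary case. For any formal group $\G$ over a complete local ring $(R,\m_R)$ and any integer $m$ coprime to $p$, the series $[m]_\G(x)$ equals $mx + (\text{higher order})$, and since $m$ is a unit in $R$ this factors as $x$ times a unit in $R\powser{x}$, so its only root in $\m_R$ is $0$. Hence any homomorphism from the prime-to-$p$ part of $A$ into $\G(R)$ is trivial, and I may as well assume $A = \bigoplus_{j=1}^{r}\Z/p^{k_j}$.

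Next, fix a coordinate on $\Gu$, so that for any complete local $\Sect_{\LT}$-algebra $(R,\m_R)$ classified by $j\colon \Sect_{\LT}\to R$, the pullback $j^*\Gu$ has underlying pointed set $\m_R$ with group structure coming from the associated formal group law. Under this identification, giving a homomorphism $A \to j^*\Gu$ over $R$ is exactly the data of a tuple $(a_1,\dots,a_r) \in \m_R^{r}$ with $[p^{k_j}]_\Gu(a_j) = 0$ for each $j$. I would therefore define
\[
\Sect_{\Hom(A,\Gu)} \;:=\; \Sect_{\LT}\powser{a_1,\dots,a_r}\big/\bigl([p^{k_1}]_\Gu(a_1),\dots,[p^{k_r}]_\Gu(a_r)\bigr),
\]
a complete local $\Sect_{\LT}$-algebra whose maximal ideal is the image of $\m_{\Sect_{\LT}} + (a_1,\dots,a_r)$. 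Corepresentability then follows by combining the Lubin-Tate theorem with the fact, already noted in the paragraph preceding the statement, that $\Hom(A,\Gu)(R,\m_R)$ is a fine groupoid: given $(f\colon A \to \G, i, \tau)$, Lubin-Tate supplies a unique classifying $j$ and a unique $\star$-isomorphism $\delta\colon j^*\Gu \to \G$, and $\delta^{-1}\circ f$ yields the required tuple in $\m_R^{r}$ and hence a map $\Sect_{\Hom(A,\Gu)}\to R$.

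For the freeness claim, the key tool is Weierstrass preparation. Because $\Gu$ reduces modulo $\m_{\Sect_{\LT}}$ to the height-$n$ formal group $\F$, the series $[p]_\Gu(x)$ is congruent to $x^{p^n}$ modulo higher-order error terms and $\m_{\Sect_{\LT}}$; iterating the formal group law inductively shows $[p^{k_j}]_\Gu(x) \equiv x^{p^{nk_j}} \pmod{\m_{\Sect_{\LT}}}$. Weierstrass preparation then factors $[p^{k_j}]_\Gu(x) = u_j(x)\,w_j(x)$ with $u_j \in \Sect_{\LT}\powser{x}^{\times}$ and $w_j$ a distinguished polynomial of degree $p^{nk_j}$, and identifies $\Sect_{\LT}\powser{x}/[p^{k_j}]_\Gu(x)$ with the free $\Sect_{\LT}$-module on $1, x, \dots, x^{p^{nk_j}-1}$. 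Adjoining the variables $a_1,\dots,a_r$ one at a time — the residue field remains $\ka$ at each intermediate stage, so the Weierstrass hypothesis continues to hold over the enlarged base — I would conclude by induction that $\Sect_{\Hom(A,\Gu)}$ is free over $\Sect_{\LT}$ of rank $\prod_{j} p^{nk_j} = |A|^{n}$.

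The main obstacle, and the only place requiring genuine care, is verifying that the leading term of $[p^{k_j}]_\Gu(x)$ modulo the maximal ideal is precisely $x^{p^{nk_j}}$ through this iteration; once the Weierstrass degree is pinned down at every stage, the rest of the argument is either formal from the corepresentability of $\LT$ or routine bookkeeping with complete local rings.
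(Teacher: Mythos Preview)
Your proposal is correct and follows essentially the same approach as the paper: build the corepresenting ring explicitly as $\Sect_{\LT}\powser{a_1,\dots,a_r}/([p^{k_j}]_{\Gu}(a_j))$, invoke Weierstrass preparation for freeness, and use the Lubin-Tate theorem to pass from corepresentability over $\Sect_{\LT}$ to corepresentability on $\Comprings$. The only real difference is that you include an explicit reduction to the $p$-primary case, whereas the paper's proof simply writes $A = \prod_i C_{p^{k_i}}$ without comment; your extra paragraph is harmless and arguably fills a small omission.
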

\begin{proof}
The idea of the proof is that we will first construct an $\Sect_{\LT}$-algebra with the correct algebraic properties and then use Theorem \ref{lubintate} to show that the underlying complete local ring corepresents the moduli problem $\Hom(A,\Gu)$. 

Recall that $\Comprings_{\Sect_{\LT}/}$ is the category of complete local $\Sect_{\LT}$-algebras. Consider the functor
\[
\Gu^{A} \colon \Comprings_{\Sect_{\LT}/} \to \text{Abelian Groups}
\]
that sends a complete local $\Sect_{\LT}$-algebra $j \colon \Sect_{\LT} \to R$ to the abelian group $\hom(A,j^*\Gu(R))$, which is isomorphic to $\hom(A, \Gu(R))$. When $A = C_{p^k}$, this functor is corepresented by $\Sect_{\LT}\powser{x}/[p^k]_{\Gu}(x)$, where $[p^k]_{\Gu}(x)$ is the $p^k$-series for the formal group law associated a choice of coordinate on $\Gu$. This is a complete local ring with maximal ideal $\m+(x)$, where $\m$ is the maximal ideal of $\Sect_{\LT}$, and it follows from the Weierstrass preparation theorem that it is a free module of rank $p^{kn}$. If $A = \prod_iC_{p^{k_i}}$, then $\Gu^A$ is corepresented by a tensor product of these sorts of $\Sect_{\LT}$-algebras and thus is finitely generated and free as an $\Sect_{\LT}$-module. Thus $\Sect_{\Gu^A}$ is a complete local $\Sect_{\LT}$-algebra that is finitely generated and free as an $\Sect_{\LT}$-module.

We will now show that the functor corepresented by the underlying complete local ring
\[
\hom(\Sect_{\Gu^A},-) \colon \Comprings \to \text{Set}
\]
is naturally isomorphic to $\Hom(A, \Gu)$. 

A map of complete local rings $\Sect_{\Gu^A} \to R$ is equivalent to the choice of an $\Sect_{\LT}$-algebra structure on $R$, $j \colon \Sect_{\LT} \to R$, and a homomorphism $A \to j^*\Gu$. Using this data, we may form the deformation
\[
(A \to j^* \Gu, j/\m, \id_{(j/\m)^*\F}).
\]

Let $(f \colon A \to \G, i, \tau)$ be an object in the groupoid $\Hom(A, \Gu)(R, \m)$. By Theorem \ref{lubintate}, there is a map $j \colon \Sect_{\LT} \to R$ and a unique $\star$-isomorphism
\[
(\G, i, \tau) \cong (j^* \Gu, j/\m, \id_{(j/\m)^*\F})
\]
which includes the data of an isomorphism of formal groups $t \colon \G \cong j^*\Gu$. Thus the deformation $(f, i, \tau)$ is (necessarily uniquely) $\star$-isomorphic to the deformation
\[
(A \lra{f} \G \lra{t} j^*\Gu, j/\m, \id_{(j/\m)^*\F}).
\]
Now the data of $j$ and $A \to j^*\Gu$ is equivalent to the data of a map $\Sect_{\Gu^A} \to R$. These two constructions are clearly inverse to each other.
\end{proof}

Let $A$ be a finite abelian $p$-group. Level structures on formal groups were introduced by Drinfeld in \cite[Section 4]{drinfeldell1}. A level structure $f \colon A \to \G$ is a homomorphism from $A$ to $\G$ over $R$ with a property. There is a divisor on $\G$ associated to every homomorphism from $A$ to $\G$ and the idea is that a level structure is a homomorphism over $R$ for which the associated divisor is a subgroup scheme of $\G$. More explictly, fix a coordinate on $\G$, which provides an isomorphism $\Sect_{\G} \cong R\powser{x}$. For $f \colon A \to \G(R)$, let $g_f(x) = \Prod{a \in A} (x-f(a))$. The divisor associated to $f$ is $\Spf(R\powser{x}/(g_f(x)))$ and $f$ is a level structure if this is a subgroup scheme of $\G$ and the rank of $A$ is less than or equal to $n$ (the height). It is useful to think about level structures as injective maps, even though they are not necessarily injective in any sense.

We define
\[
\Level(A,\Gu) \colon \Comprings \to \Groupoids
\]
to be the functor that assigns to a complete local ring $(R,\m)$ the groupoid $\Level(A,\Gu)(R,\m)$ with objects triples $(l \colon A \to \G, i, \tau)$, where $l$ is a level structure, and morphisms $\star$-isomorphisms that commute with the level structures in the same way as in the definition of $\Hom(A,\Gu)(R,\m)$. Again, there is a faithful forgetful natural transformation $\Level(A,\Gu) \to \LT$. The proof of the next corollary is along the same lines as the proof of Corollary \ref{HomLT} with some extra work needed to show that the complete local ring is a domain.

\begin{cor} \cite[Proposition 4.3]{drinfeldell1}
The functor $\Level(A,\Gu)$ is corepresentable by a complete local domain $\Sect_{\Level(A,\Gu)}$ that is finitely generated and free as a module over $\Sect_{\LT}$.
\end{cor}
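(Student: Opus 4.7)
The plan is to follow the template of \Cref{HomLT}: exhibit a complete local $\Sect_{\LT}$-algebra $\Sect_{\Level(A,\Gu)}$ that algebraically corepresents the subfunctor of level structures inside $\Gu^{A}$, and then use \Cref{lubintate} together with the $\star$-rigidity of the Lubin-Tate groupoid to identify $\Spf \Sect_{\Level(A,\Gu)}$ with $\Level(A,\Gu)$ exactly as in \Cref{HomLT}. The new features compared to \Cref{HomLT} are the construction of the correct ring and the verification that it is a domain.

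For $A = C_{p^k}$, after fixing a coordinate on $\Gu$, Weierstrass preparation factors the $p^k$-series as $[p^k]_{\Gu}(x) = \langle p^k\rangle(x) \cdot [p^{k-1}]_{\Gu}(x)$, and I would define
\[
\Sect_{\Level(C_{p^k},\Gu)} := \Sect_{\LT}\powser{x}/\langle p^k\rangle(x).
\]
This quotient enforces precisely that the universal divisor attached to a cyclic generator is disjoint from the $p^{k-1}$-torsion, which is the level structure condition. For a general finite abelian $p$-group $A$ (necessarily of rank at most $n$), I would construct $\Sect_{\Level(A,\Gu)}$ inductively along a filtration $0 = A_0 \subset A_1 \subset \cdots \subset A_m = A$ with cyclic subquotients $A_{i+1}/A_i \cong C_p$. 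Given a level structure on $A_i$ with image the finite subgroup scheme $H_i \subset \Gu$, extending it to a level structure on $A_{i+1}$ amounts to choosing a level structure on $A_{i+1}/A_i \cong C_p$ into the quotient formal group $\Gu/H_i$, which is well-defined precisely because $H_i$ is a subgroup scheme. Thus $\Sect_{\Level(A_{i+1},\Gu)}$ is built from $\Sect_{\Level(A_i,\Gu)}$ by applying the cyclic construction to the universal quotient formal group at that stage; Weierstrass preparation at each step shows that the resulting ring is finitely generated and free over $\Sect_{\LT}$, and the identification with $\Level(A,\Gu)$ on the level of functors goes through verbatim as in \Cref{HomLT} since $\star$-isomorphisms preserve level structures.

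The main obstacle is showing that $\Sect_{\Level(A,\Gu)}$ is a domain. My approach would be to argue inductively that each $\Sect_{\Level(A_i,\Gu)}$ is a regular local Noetherian ring of Krull dimension $n$, from which the domain property is automatic. The base case $\Sect_{\Level(C_p,\Gu)}$ is regular because $\langle p\rangle(x)$ is a distinguished polynomial of degree $p^n - 1$ whose constant term equals $p$ up to a unit (it is $[p]_{\Gu}'(0)$), so quotienting by it trades the parameter $p$ for $x$ in a regular system of parameters $(p, u_1, \ldots, u_{n-1}, x)$ of $\Sect_{\LT}\powser{x}$. The inductive step is more delicate because $p$ is no longer a generator of $\m/\m^2$ at later stages; one must analyze the $\langle p \rangle$-polynomial of the universal quotient formal group more carefully, using that $\Gu/H_i$ still has height $n$ over the residue field. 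This is the main technical content of Drinfeld's \cite[Proposition 4.3]{drinfeldell1}, which I would cite for the inductive step.
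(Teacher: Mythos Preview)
The paper does not actually prove this corollary; it only remarks that the argument ``is along the same lines as the proof of \Cref{HomLT} with some extra work needed to show that the complete local ring is a domain,'' and cites Drinfeld. Your proposal follows exactly this template and supplies considerably more detail than the paper, correctly identifying inductive regularity as the mechanism behind the domain property and deferring the hard step to Drinfeld, just as the paper does.

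One technical point in your elaboration is not quite right as stated. You assert that extending a level structure on $A_i$ to one on $A_{i+1}$ ``amounts to choosing a level structure on $A_{i+1}/A_i \cong C_p$ into the quotient formal group $\Gu/H_i$.'' A rank count shows this fails: for $A_i = (\Z/p)^i \subset A_{i+1} = (\Z/p)^{i+1}$ the fiber of restriction has rank $p^n - p^i$, while $\Level(C_p,\Gu/H_i)$ has rank $p^n - 1$; and for $A_i = C_{p^i} \subset A_{i+1} = C_{p^{i+1}}$ the fiber has rank $p^n$, again not $p^n - 1$. Drinfeld's actual inductive step adjoins a root of $[p]_{\Gu}(T)/\prod_{a \in H_i}(T - a)$ over the previous stage rather than passing to the quotient formal group. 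Similarly, your justification of the cyclic formula (``disjoint from the $p^{k-1}$-torsion'') conflates ``generator of exact order $p^k$'' with the level structure condition; these happen to cut out the same closed subscheme of $\Hom(C_{p^k},\Gu)$, but that equality is essentially the content of the result rather than its definition. Since you explicitly defer the inductive step to \cite[Proposition 4.3]{drinfeldell1}, neither slip undermines your outline.
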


A particular case of this moduli problem will play an important role. Let $\lat = \Z_{p}^{\times n}$ and let $\qz = \lat^*$, the Pontryagin dual of $\lat$. The abelian group $\qz$ is abstractly isomorphic to $\QZ{\times n}$. We use the notation $\lat$ to indicate that we have fixed a ($p$-adic) lattice for which the Pontryagin dual $\qz$ is a ($p$-adic) torus. Let $\qzpk$ be the $p^k$-torsion in the torus and let 
\[
\lat_{p^k} = p^k\lat \subset \lat.
\]
By construction, there is a canonical isomorphism $\qzpk \cong (\latpk)^*$. Both $\qzpk$ and $\latpk$ are abstractly isomorphic to $(\Z/p^k)^{\times n}$. The functors $\Level(\qzpk, \Gu)$ are particularly important.

Finally we introduce one last moduli problem over $\LT$. Define 
\[
\Sub_{p^k}(\Gu) \colon \Comprings \to \Groupoids
\]
to be the functor that assigns to a complete local ring $(R,\m)$ the groupoid $\Sub_{p^k}(\Gu)(R,\m)$ with objects triples $(H \subset \G, i, \tau)$, where $H$ is a subgroup scheme of $\G$ of order $p^k$. Morphisms are $\star$-isomorphisms that send the chosen subgroup to the chosen subgroup. Just as with the other moduli problems, there is a faithful forgetful natural transformation $\Sub_{p^k}(\Gu) \to \LT$. Once again, the proof of the next corollary is along the same lines as the proof of Corollary \ref{HomLT} with some extra work needed to show that the complete local ring is finitely generated and free as an $\Sect_{\LT}$-module.

\begin{cor} \cite[Theorem 42]{subgroups} \label{ltsub}
The moduli problem $\Sub_{p^k}(\Gu)$ is corepresented by a complete local ring $\Sect_{\Sub_{p^k}(\Gu)}$ that is finitely generated and free as a module over $\Sect_{\LT}$.
\end{cor}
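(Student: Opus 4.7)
The plan is to mimic the proof of Corollary~\ref{HomLT}. First I would construct an $\Sect_{\LT}$-algebra $B$ that corepresents the set-valued functor on $\Comprings_{\Sect_{\LT}/}$ sending $j \colon \Sect_{\LT} \to R$ to the set of order-$p^k$ subgroup schemes of $j^*\Gu$. Then, using Theorem~\ref{lubintate} and the already-noted faithfulness of the forgetful transformation $\Sub_{p^k}(\Gu) \to \LT$, I would argue exactly as in Corollary~\ref{HomLT} that $B$, viewed as a complete local ring, corepresents the full groupoid-valued functor $\Sub_{p^k}(\Gu)$: any object $(H \subset \G, i, \tau)$ is uniquely $\star$-identified with a deformation of the form $(H' \subset j^*\Gu, j/\m, \id)$ via Theorem~\ref{lubintate}, and the subgroup data transports along the unique $\star$-isomorphism.

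To construct $B$, fix a coordinate on $\Gu$ so that $\Sect_{\Gu} \cong \Sect_{\LT}\powser{x}$. A subgroup scheme $H \subset j^*\Gu$ of order $p^k$ is finite flat of rank $p^k$ over $R$, so by the Weierstrass preparation theorem the ideal cutting it out is generated by a unique monic polynomial of the form $f(x) = x^{p^k} + c_1 x^{p^k-1} + \cdots + c_{p^k-1}x$ with vanishing constant term. The condition that $V(f)$ be closed under the group law amounts, modulo $f(x)$ and $f(y)$, to the vanishing of $f(x +_{\Gu} y) \in R\powser{x,y}$; unraveling this produces a system of polynomial relations on the coefficients $c_i$ over $\Sect_{\LT}$. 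I would take $B = \Sect_{\LT}[c_1,\ldots,c_{p^k-1}]/I$, where $I$ is the ideal generated by these subgroup relations, with its $\m$-adic topology.

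The main obstacle is showing that $B$ is finitely generated and free as an $\Sect_{\LT}$-module, which is the substance of Strickland's work in \cite{subgroups}. A workable route is to compare $B$ with the level structure rings from the previous corollary: for each finite abelian $p$-group $A$ of order $p^k$ with at most $n$ generators, the assignment sending a level structure $l \colon A \to \G$ to the subgroup cut out by the divisor $\sum_{a \in A}[l(a)]$ defines a natural transformation $\Level(A,\Gu) \to \Sub_{p^k}(\Gu)$, hence a ring map $B \to \Sect_{\Level(A,\Gu)}$ whose target is already known to be free of calculable rank over $\Sect_{\LT}$. A bookkeeping of the $\Aut(A)$-orbits on surjections $A \twoheadrightarrow H$ exhibits $\Sect_{\Level(A,\Gu)}$ as a free $B$-module of the expected rank; faithful flatness of this extension transports freeness back down to $B$, and the resulting rank of $B$ over $\Sect_{\LT}$ comes out as a sum of Gaussian binomial coefficients indexed by partitions of $k$ with at most $n$ parts.
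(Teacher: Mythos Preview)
Your reduction strategy and the construction of $B$ are exactly what the paper has in mind: it says the argument is ``along the same lines as the proof of Corollary~\ref{HomLT} with some extra work needed to show that the complete local ring is finitely generated and free as an $\Sect_{\LT}$-module,'' and defers that extra work entirely to \cite[Theorem~42]{subgroups}. So on the corepresentability half you match the paper.

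The gap is in your freeness sketch. For a fixed $A$ of order $p^k$, the map $\Level(A,\Gu)\to\Sub_{p^k}(\Gu)$ is \emph{not} faithfully flat once $n\ge 2$ and $k\ge 2$. After base change to $C_0$ the source parametrizes injections $A\hookrightarrow\qz$, so the fiber over a subgroup $H\subset\qz$ of order $p^k$ is $\Iso(A,H)$, which is empty whenever $H\not\cong A$. Hence $\Sect_{\Level(A,\Gu)}$ has generic fibers of nonconstant rank over $B$ and cannot be free, or even flat, over it; your descent step then has nothing to descend from. Your own ``bookkeeping of $\Aut(A)$-orbits on surjections $A\twoheadrightarrow H$'' already detects this: with $|A|=|H|=p^k$ a surjection is an isomorphism, so there is one orbit if $A\cong H$ and none otherwise. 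Concretely, for $n=2$ and $|A|=p^2$ cyclic, the rank of $\Sect_{\Level(A,\Gu)}$ over $\Sect_{\LT}$ is $p^4-p^2$, while that of $B$ is $p^2+p+1$, and the former is not a multiple of the latter.

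A repair in the same spirit is to work instead with the full Drinfeld level $\Level(\qzpk,\Gu)$: for each subgroup $H\subset\qzpk$ of order $p^k$ one has the map $l\mapsto l(H)$ into $\Sub_{p^k}(\Gu)$, and now every isomorphism type of order-$p^k$ subgroup of $\qz$ is reached. Organizing these maps under the $\Aut(\qzpk)$-action and comparing ranks over the special and generic fibers is essentially the substance of Strickland's argument, and it is where the Gaussian-binomial count you allude to actually enters.
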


\section{Symmetries of certain moduli problems over Lubin-Tate space}

Besides being corepresentable, the moduli problems described in the last section are highly symmetric. We begin by describing the action of the stabilizer group on each of the moduli problems and then we will explain how to take the quotient of a deformation by a finite subgroup and how a certain limit of these moduli problems picks up extra symmetry. A reference that includes a concise exposition of much of the material in this section is \cite[Section 1]{carayolltt}.

The group $\Aut(\F / \ka)$ is known as the Morava stabilizer group. This group acts on the right on all of the moduli problems that we have described in a very simple way. Let $s \in \Aut(\F / \ka)$, then
\begin{equation} \label{stabaction}
s \cdot (\G, i, \tau)  = (\G, i, (i^*s) \tau ),
\end{equation}
where $i^*s \colon i^* \F \to i^* \F$ is the induced isomorphism. This is well-defined: if $\delta$ is a $\star$-isomorphism from $(\G, i , \tau)$ to $(\G', i, \tau')$ then $f$ is still a $\star$-isomorphism between $s \cdot (\G, i , \tau)$ and $s \cdot (\G', i, \tau')$ since
\[
\xymatrix{\pi^* \G \ar[r]^{\tau} \ar[d]^{\pi^*\delta} & i^* \F \ar[r]^-{i^*s} \ar[d]_{=} & i^* \F  \ar[d]_{=}   \\ \pi^* \G' \ar[r]^{\tau'} & i^* \F \ar[r]^-{i^*s} & i^* \F   }
\]
commutes. The actions of $\Aut(\F / \ka)$ on $\Hom(A,\Gu)$, $\Level(A,\Gu)$, and $\Sub_{p^k}(\Gu)$ are defined similarly.

There is a right action of $\Aut(A)$ on both $\Hom(A,\Gu)$ and $\Level(A,\Gu)$ given by precomposition and a left action of $\Aut(A)$ on both $\Hom(A,\Gu)$ and $\Level(A,\Gu)$ given by precomposition with the inverse. Since neither of these actions affects $\G$, $i$, or $\tau$, the corresponding actions of $\Aut(A)$ on $\Sect_{\Hom(A,\Gu)}$ and $\Sect_{\Level(A,\Gu)}$ are by $\Sect_{\LT}$-algebra maps. We mention the left action by precomposition with the inverse because that is the action that we will generalize and that will play the most important role in our constructions. Specializing to $A = \qzpk$, we see that there is a left action of $\Aut(\qzpk) \cong\GL_n(\Zp{k})$ on $\Level(\qzpk,\Gu)$ given by precomposition with the inverse. 

There is a forgetful functor $\Level(\qzpk,\Gu) \to \Level(\qz[p^{k-1}],\Gu)$ induced by the map sending a level structure $\qzpk \to \G$ to the level structure given by the composite $\qz[p^{k-1}] \subset \qzpk \to \G$. Let 
\[
\Level(\qz,\Gu) = \lim_{k} \Level(\qzpk,\Gu). 
\]
We will refer to the ring of functions $\Sect_{\Level(\qz,\Gu)} = \colim_k \Sect_{\Level(\qzpk,\Gu)}$ as the Drinfeld ring. The groupoid $\Level(\qz,\Gu)(R,\m)$ can be thought of as follows: It consists of tuples $(l \colon \qz \to \G, i, \tau)$ up to $\star$-isomorphism, where $l$ is a map from $\qz$ to $\G$ over $R$ such that, for all $k \geq 0$, the induced map $\qz[p^k] \to \G$ is a level structure. If $H \subset \qz$ is a subgroup of order $p^k$, then $H$ determines a map $\Level(\qz,\Gu) \to \Sub_{p^k}(\Gu)$ defined by
\[
(l, i, \tau) \mapsto (l(H) \subset \G, i, \tau).
\]

There is a left action of $\Aut(\qz)$ on $\Level(\qz,\Gu)$ given by precomposition with the inverse. An isogeny $\qz \to \qz$ is a homomorphism with finite kernel. It turns out that the action of $\Aut(\qz)$ on $\Level(\qz,\Gu)$ extends to an action of $\Isog(\qz)$, the monoid of endoisogenies of $\qz$, a much larger object! To describe this action, we first need to describe the quotient of a deformation by a finite subgroup.

Let $\sigma \colon \Spec(\ka) \to \Spec(\ka)$ be the Frobenius endomorphism on $\ka$ and let $\sigma_{\F} \colon \F \to \F$ be the Frobenius endomorphism on the formal group $\F$. Let $\sigma^* \F$ be the pullback of $\F$ along $\sigma$. The relative Frobenius is an isogeny of formal groups of degree $p$
\[
\Frob \colon \F \to \sigma^* \F.
\]
It is constructed using the universal property of the pullback:
\begin{equation} \label{frob}
\xymatrix{\F \ar@/_1pc/[ddr] \ar@/^1pc/[rrd]^{\sigma_{\F}} \ar@{-->}[dr]|{\Frob} & & \\ & \sigma^* \F \ar[r] \ar[d] & \F \ar[d] \\ & \Spec(\ka) \ar[r]^-{\sigma} & \Spec(\ka).}
\end{equation}
The formal group $\F$ lives over $\ka$, so there is a canonical map $\F \to \Spec(\ka)$. This fits together with the $\sigma_{\F}$ to produce the map $\Frob \colon \F \to \sigma^* \F$. Using the $k$th power of the Frobenius $\sigma^k$ and $\sigma_{\F}^{k}$ in the diagram above instead of $\sigma$ and $\sigma_{\F}$, we construct the $k$th relative Frobenius $\Frob^k \colon \F \to (\sigma^k)^* \F$. The kernel of the $k$th relative Frobenius is a subgroup scheme of $\F$ of order $p^k$. A formal group over a field of characteristic $p$ has a unique subgroup scheme of order $p^k$ for each $k \geq 0$. Thus $\ker(\Frob^k) \subset \F$ is the unique subgroup scheme of $\F$ of order $p^k$.

Let $(\G, i, \tau)$ be a deformation of $\F/\ka$ to a complete local ring $R$ and let $H \subset \G$ be a subgroup scheme of order $p^k$. The quotient of $\G$ by $H$ is the formal group 
\[
\G/H = \xymatrix{\text{Coeq}(\G \times H \ar@<.5ex>[r]^-{\text{act}} \ar@<-.5ex>[r]_-{\text{proj.}} & \G)}
\]
constructed as the coequalizer of the action map and the projection map as in \cite[Section 5]{subgroups}. There is a canonical way to extend the quotient formal group $\G/H$ to a deformation of $\F$. Let $q \colon \G \to \G/H$ be the quotient map.  Consider the following diagram of formal groups over $R/m$
\[
\xymatrix{\pi^*\G \ar[r]^{\tau} \ar[d]^{\pi^*q} & i^*\F  \ar[d]^{i^*\Frob^k} \\ \pi^*(\G/H)  \ar@{-->}[r]^{\tau/H} & i^* (\sigma^k)^* \F}
\]
Since both $\ker(\pi^*q) \subset \pi^*\G$ and $\ker(i^* \Frob^k \circ \tau) \subset \pi^*\G$ have the same order, we must have $\ker(\pi^*q) = \ker(i^* \Frob^k \circ \tau)$. Thus, by the first isomorphism theorem for formal groups, there is a unique isomorphism (the dashed arrow) making the diagram commutes. We will call this isomorphism $\tau/H$. Thus, given the deformation $(\G, i, \tau)$ over $R$ and $H \subset \G$, we may form the deformation
\[
(\G/H, i \circ \sigma^k, \tau/H).
\]

Finally, we will produce a left action of the monoid $\Isog(\qz)$ on $\Level(\qz,\Gu)$. First note that $\Isog(\qz)$ really is much larger than $\Aut(\qz)$. Pontryagin duality produces an anti-isomorphism between $\Aut(\qz)$ and $\GL_n(\Z_p)$ and an anti-isomorphism between $\Isog(\qz)$ and the monoid of $n \times n$ matrices with coefficients in $\Z_p$ and non-zero determinant. Let $H \subset \qz$ be a finite subgroup of order $p^k$, let $\phi_H \in \Isog(\qz)$ be an endoisogeny of $\qz$ with kernel $H$, and let $q_H \colon \qz \to \qz/H$ be the quotient map. Recall that if $l \colon \qzpk \to \G$ is a level structure and $H \subset \qzpk$, then $l(H) \subset \G$ is a subgroup scheme of order $|H|$. We will abuse notation and refer to $l(H) \subset \G$ as $H$. 

For a deformation equipped with level structure
\[
(l \colon \qz \to \G, i, \tau),
\]
we set 
\begin{equation} \label{isogaction}
\phi_H \cdot (l, i, \tau)    = (l/H \circ \psi_{H}^{-1}, i \circ \sigma^k, \tau/H),
\end{equation}
where $l/H \colon \qz/H \to \G/H$ is the induced level structure and $\psi_H$ is the unique isomorphism that makes the square
\begin{equation} \label{psidiagram}
\xymatrix{ \qz \ar[d]_{q_H} \ar[dr]^-{\phi_H} & \\ \qz/H \ar[r]^-{\psi_H}_-{\cong} & \qz}
\end{equation}
commute. We leave it to the reader to define the action of $\phi_H$ on $\star$-isomorphisms and to check that the resulting formula does in fact give an action.

\begin{prop}
The formula of Equation \eqref{isogaction} defines a left action of $\Isog(\qz)$ on $\Level(\qz,\Gu)$.
\end{prop}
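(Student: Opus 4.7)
The plan is to verify the three monoid-action axioms: (i) the formula produces a well-defined object of $\Level(\qz,\Gu)(R,\m)$; (ii) the identity acts as the identity; and (iii) $(\phi'\phi) \cdot x = \phi' \cdot (\phi \cdot x)$ for all $\phi, \phi' \in \Isog(\qz)$. The action on $\star$-isomorphisms, once defined in the obvious way, satisfies the same axioms by the same arguments.

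For (i), the isomorphism $\psi_H^{-1} \colon \qz \to \qz/H$ transports $p^j$-torsion to $p^j$-torsion, so the restriction of $l/H \circ \psi_H^{-1}$ to $\qz[p^j]$ factors through a map of finite groups, and the claim reduces at each finite level to the standard fact that pushing a Drinfeld level structure forward along the quotient by a finite subgroup yields a level structure (the same ingredient behind $\Sub_{p^k}(\Gu)$). That $\tau/H$ is a $\star$-isomorphism of the asserted type is built into its construction via the first isomorphism theorem for formal groups. For (ii), taking $\phi_H = \id$ forces $H = 0$, $k = 0$, and $\psi_H = \id$, so the formula returns $(l,i,\tau)$ unchanged.

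The substance of the proof is (iii). Let $H = \ker(\phi)$ and $H' = \ker(\phi')$ have orders $p^k$ and $p^{k'}$, and set $K = \ker(\phi'\phi) = \phi^{-1}(H')$, of order $p^{k+k'}$, so $H \subseteq K$. Writing $\phi = \psi_H \circ q_H$ and $\phi' = \psi_{H'} \circ q_{H'}$, the equality $\phi^{-1}(H') = K$ gives $\psi_H^{-1}(H') = K/H$, so that $q_{H'} \circ \psi_H$ kills $K/H$ and descends, via the third isomorphism theorem $(\qz/H)/(K/H) \cong \qz/K$, to an isomorphism $\bar\psi \colon \qz/K \to \qz/H'$. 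Uniqueness in the factorization $\phi'\phi = \psi_{\phi'\phi} \circ q_K$ then yields the key algebraic identity $\psi_{\phi'\phi} = \psi_{H'} \circ \bar\psi$.

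With that in hand, I would match the three coordinates of $\phi' \cdot (\phi \cdot (l,i,\tau))$ and $(\phi'\phi) \cdot (l,i,\tau)$ separately. The residue-field maps agree because $\sigma^{k+k'} = \sigma^{k'} \circ \sigma^k$. The equality $\tau/K = (\tau/H)/H'$ follows from uniqueness of the Frobenius quotient after gluing the two defining diagrams, using $\Frob^{k+k'} = ((\sigma^k)^* \Frob^{k'}) \circ \Frob^k$ on $\F$ and the factorization $q_K = \bar q \circ q_H$ with $\bar q \colon \G/H \to \G/K$ on the formal-group side. For the level structure, writing $l_1 = l/H \circ \psi_H^{-1}$, the third isomorphism theorem applied to $\qz$ identifies $\psi_H^{-1}/H'$ with $\bar\psi^{-1}$, and applied to $\G$ identifies $(\G/H)/l_1(H')$ with $\G/K$ via $l_1(H') = l(K)/l(H)$, turning $l_1/H' \circ \psi_{H'}^{-1}$ into $l/K \circ \bar\psi^{-1} \circ \psi_{H'}^{-1} = l/K \circ \psi_{\phi'\phi}^{-1}$. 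The main obstacle is not any single step but the bookkeeping: several canonical isomorphisms arising from first- and third-isomorphism-theorem invocations appear on both the lattice and the formal-group sides, and keeping them aligned is cleanest carried out by drawing the relevant commutative cubes rather than in prose.
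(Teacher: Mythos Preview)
Your verification is correct and is precisely the check the paper omits: the proposition appears in the paper without proof, with the surrounding text explicitly leaving it to the reader to define the action on $\star$-isomorphisms and to verify that the formula gives an action. Your decomposition into well-definedness, identity, and associativity---with the associativity reduced to the identity $\psi_{\phi'\phi} = \psi_{H'}\circ\bar\psi$ and the matching of the three coordinates via iterated first/third isomorphism theorems and the factorization $\Frob^{k+k'} = ((\sigma^k)^*\Frob^{k'})\circ\Frob^k$---is exactly the routine unwinding one is meant to supply.
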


Restricting the above action to the case that $H=e$, the trivial subgroup, we that $q_e$ is the identity map and that $\psi_e = \phi_e$ so that
\[
\phi_e \cdot (l,i,\tau) = (l \circ \phi_{e}^{-1}, i, \tau).
\]
This shows that the action of $\Isog(\qz)$ on $\Level(\qz,\Gu)$ extends the action of $\Aut(\qz)$ given by precomposition with the inverse. Since we have a left action of $\Isog(\qz)$ on $\Level(\qz,\Gu)$, we have a right action of $\Isog(\qz)$ on $\Sect_{\Level(\qz,\Gu)}$.


\section{Morava $E$-theory} \label{etheory}
In this section we construct Morava $E$-theory using the Landweber exact functor theorem, we calculate the $E$-cohomology of finite abelian groups, we describe the Goerss-Hopkins-Miller theorem, and we describe the resulting power operations on $E$-cohomology.


Morava $E$-theory is a cohomology theory that is built using the Landweber exact functor theorem using the universal formal group over the Lubin-Tate ring associated to the height $n$ formal group $\F/\ka$. Theorem \ref{lubintate} states that there is a non-canonical isomorphism
\[
\Sect_{\LT} \cong W(\ka)\powser{u_1, \ldots, u_{n-1}}.
\]
It follows from \cite[Section 6]{subgroups} that a coordinate can be chosen on the universal deformation formal group $\G_u$ such that the associated formal group law has the property that
\[
[p]_{\G_u}(x) = u_ix^{p^i} \mod (p, u_1, \ldots u_{i-1}, x^{p^i+1}).
\]
Since $(p,u_1, \ldots, u_{n-1})$ is a regular sequence in $W(\ka)\powser{u_1, \ldots, u_{n-1}}$, this property allows the Landweber exact functor theorem to be applied producing a homology theory $E_*(-) = E(\F,\ka)_*(-)$, called Morava $E$-theory, by the formula
\[
E_*(X) = \Sect_{\LT} \otimes_{MUP_0} MUP_*(X),
\] 
where $MUP$ is $2$-periodic complex cobordism. Good introductions to the Landweber exact functor theorem can be found in \cite{coctalos} and \cite[Lecture 16]{luriechromatic}.

\begin{example}
When $(\F, \ka) = (\hat{\G}_m, \F_p)$, the formal multiplicative group over the field with $p$ elements, then $\G_u = \hat{\G}_m$ over $W(\ka) = \Z_p$. The resulting cohomology theory is precisely $p$-adic $K$-theory.
\end{example}

There is a cohomology theory associated to this homology theory. On spaces equivalent to a finite CW-complex, it satisfies
\[
E^*(X) = \Sect_{\LT} \otimes_{MUP_0} MUP^*(X).
\]
From now on, we will always write $E^0$ rather than $\Sect_{\LT}$ for the Lubin-Tate ring. We will primarily be interested in spaces of the form $BG$ for $G$ a finite group and these are almost always not equivalent to a finite CW complex. To get a grip on the $E$-cohomology of these large spaces, we boot strap from the calculation of $E^*(BS^1)$ using the Atiyah-Hirzebruch spectral sequence, the Milnor sequence, and the Mittag-Leffler condition to control $\lim^1$ as described very clearly in \cite[Chapter 2.2]{millernotes}.


\begin{prop} \cite[Theorem 2.3]{millernotes} \label{S1}
There is an isomorphism of $E^*$-algebras
\[
E^*(BS^1) \cong E^*\powser{x}.
\]
\end{prop}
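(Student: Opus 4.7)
The plan is to compute $E^*(BS^1)$ by writing $BS^1 \simeq \C P^\infty = \colim_n \C P^n$, computing $E^*(\C P^n)$ via the Atiyah-Hirzebruch spectral sequence for each finite $n$, and then passing to the limit through the Milnor sequence.

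First I would observe that since $E$ is constructed by Landweber exactness from $MUP$, it is $2$-periodic and its coefficient ring $E^*$ is concentrated in even degrees. Hence for the finite CW-complex $\C P^n$ the Atiyah-Hirzebruch spectral sequence
\[
E_{2}^{s,t} = H^s(\C P^n; E^t) \Rightarrow E^{s+t}(\C P^n)
\]
has $E_2$-term concentrated in even total degree (both factors are), so all differentials vanish for parity reasons and the spectral sequence collapses at $E_2$. There are no extension problems to worry about beyond identifying a ring generator: choosing a complex orientation on $E$ (which exists because $E$ is built from $MUP$) produces a class $x \in E^0(\C P^\infty)$ whose restriction to $E^0(\C P^1) = E^0(S^2)$ is the orientation class. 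The resulting map $E^*[x]/(x^{n+1}) \to E^*(\C P^n)$ is a map of graded $E^*$-algebras that is an isomorphism on associated graded of the AHSS filtration, hence an isomorphism.

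Next I would pass to the limit. The cofinal tower of inclusions $\C P^n \hookrightarrow \C P^{n+1}$ induces a tower of restriction maps $E^*(\C P^{n+1}) \to E^*(\C P^n)$, which under the identifications above is the obvious truncation
\[
E^*[x]/(x^{n+2}) \twoheadrightarrow E^*[x]/(x^{n+1}).
\]
These maps are surjective, so the tower satisfies the Mittag-Leffler condition and $\lim^1$ vanishes. The Milnor short exact sequence
\[
0 \to {\lim_n}^{1} E^{*-1}(\C P^n) \to E^*(BS^1) \to \lim_n E^*(\C P^n) \to 0
\]
then collapses to give an isomorphism $E^*(BS^1) \cong \lim_n E^*[x]/(x^{n+1}) = E^*\powser{x}$ of $E^*$-algebras.

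The only genuinely delicate point is verifying that the tower of restriction maps is strictly surjective (not merely that the images stabilize), which is why a concrete choice of complex orientation is needed: it lets one exhibit preimages of $x^i$ compatibly in $n$. Otherwise, the argument is an application of the general machinery (AHSS collapse by parity, Milnor sequence, Mittag-Leffler) explained in \cite[Chapter 2.2]{millernotes}.
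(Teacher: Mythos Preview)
Your proposal is correct and follows exactly the approach the paper outlines: the paper does not actually write out a proof of this proposition but instead cites \cite[Chapter 2.2]{millernotes} and names precisely the ingredients you use (Atiyah--Hirzebruch spectral sequence, Milnor sequence, and the Mittag--Leffler condition to kill $\lim^1$). Your write-up fills in those details faithfully.
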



Hopkins, Kuhn, and Ravenel observe in \cite[Lemma 5.7]{hkr} that the Gysin sequence associated to the circle bundle $S^1 \to BC_{p^k} \to BS^1$ can be used to calculate $E^*(BC_{p^k})$ from Proposition \ref{S1}.

\begin{prop}
The isomorphism of Proposition \ref{S1} induces an isomorphism of $E^*$-algebras
\[
E^*(BC_{p^k}) \cong E^*\powser{x}/([p^k]_{\G_u}(x)).
\]
\end{prop}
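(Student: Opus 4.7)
The plan is to exploit the fibration $S^1 \to BC_{p^k} \to BS^1$ coming from the inclusion $C_{p^k} \hookrightarrow S^1$, together with the Gysin sequence in $E$-cohomology. Geometrically, this is the principal $S^1$-bundle whose associated complex line bundle is the $p^k$-fold tensor power $L^{\otimes p^k}$ of the tautological line bundle $L \to BS^1$. The Euler class of $L$ in $E^2(BS^1)$ is precisely the generator $x$ under the identification of Proposition \ref{S1}, because this is literally how a coordinate on the formal group $\G_u$ is set up once the complex orientation has been chosen.

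The next step is to identify the Euler class of $L^{\otimes p^k}$. In any complex-oriented theory, tensor product of line bundles corresponds to the formal group law, so
\[
e(L^{\otimes p^k}) = [p^k]_{\G_u}(x) \in E^2(BS^1) \cong E^*\powser{x}.
\]
I would then feed this into the Gysin sequence for the $S^1$-bundle $BC_{p^k} \to BS^1$:
\[
\cdots \to E^{*-2}(BS^1) \xrightarrow{\cdot [p^k]_{\G_u}(x)} E^*(BS^1) \to E^*(BC_{p^k}) \to E^{*-1}(BS^1) \to \cdots.
\]

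To turn this into the desired isomorphism I need to know that multiplication by $[p^k]_{\G_u}(x)$ is injective on $E^*\powser{x}$. This is where the height $n$ condition enters: modulo the maximal ideal $\m \subset E^0$, the series $[p^k]_{\G_u}(x)$ reduces to a unit multiple of $x^{p^{kn}}$ plus higher-order terms, so by the Weierstrass preparation theorem it is the product of a unit and a distinguished polynomial of degree $p^{kn}$ in $E^0\powser{x}$. In particular it is a non-zero-divisor. Since $E^*(BS^1) \cong E^*\powser{x}$ is concentrated in even degrees, the long exact Gysin sequence collapses to a short exact sequence
\[
0 \to E^*\powser{x} \xrightarrow{\cdot [p^k]_{\G_u}(x)} E^*\powser{x} \to E^*(BC_{p^k}) \to 0,
\]
which yields the desired isomorphism of $E^*$-algebras $E^*(BC_{p^k}) \cong E^*\powser{x}/([p^k]_{\G_u}(x))$.

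The main obstacle, such as it is, is a bookkeeping one: ensuring that the Gysin sequence is available in $E$-cohomology for this infinite-dimensional total space and that the resulting short exact sequence is actually one of $E^*$-algebras rather than just modules. The first point is handled by the same Milnor/Mittag-Leffler argument cited from \cite{millernotes} that produces $E^*(BS^1)$, noting that the resulting quotient $E^*\powser{x}/([p^k]_{\G_u}(x))$ is a finitely generated free $E^*$-module of rank $p^{kn}$, so no further completion is needed. The second point is automatic because the map $E^*(BS^1) \to E^*(BC_{p^k})$ is a ring map induced by $BC_{p^k} \to BS^1$, and its image generates the quotient.
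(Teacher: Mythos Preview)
Your proof is correct and follows essentially the same route as the paper: both invoke the Gysin sequence for the circle bundle $S^1 \to BC_{p^k} \to BS^1$ and argue that multiplication by $[p^k]_{\G_u}(x)$ is injective. The paper's version is terser---it simply observes that $E^*\powser{x}$ is a domain rather than invoking Weierstrass preparation---but the structure of the argument is the same.
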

\begin{proof}
The Gysin sequence associated to the circle bundle $S^1 \to BC_{p^k} \to BS^1$ is the exact sequence
\[
\xymatrix{E^*\powser{x} \ar[rr]^{\times [p^k]_{\Gu}(x)} & & E^*\powser{x} \ar[r] &  E^*(BC_{p^k}),}
\]
where $[p^k]_{\Gu}(x)$ is in degree $-2$. Since $E^*\powser{x}$ is a domain, multiplication by $[p^k]_{\Gu}(x)$ is injective and the result follows.
\end{proof}

As we noted in the proof of Corollary \ref{HomLT}, the Weierstrass preparation theorem implies that $E^*\powser{x}/([p^k]_{\G_u}(x))$ is a finitely generated free module over $E^*$. Since $E^*(BC_{p^k})$ is a finitely generated free module, we can use the Kunneth isomorphism to boot strap our calculation to finite abelian groups. 
\begin{prop} \label{abcomputation}
Let $A \cong \prod_{i=1}^{m} C_{p^{k_i}}$, then there is an isomorphism of $E^*$-algebras
\[
E^*(BA) \cong E^*\powser{x_1, \ldots, x_m}/([p^{k_1}]_{\G_u}(x), \ldots, [p^{k_m}]_{\G_u}(x)).
\]
\end{prop}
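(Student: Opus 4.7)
The plan is to reduce to the cyclic case by induction on $m$ and invoke the Künneth formula. The base case $m=1$ is the preceding proposition. For the inductive step, write $A \cong A' \times C_{p^{k_m}}$ where $A' \cong \prod_{i=1}^{m-1} C_{p^{k_i}}$ and use the identification $BA \simeq BA' \times BC_{p^{k_m}}$ together with the fact that the inductive hypothesis presents $E^*(BA')$ in the desired form.

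The key point — and the only thing that requires justification beyond formal manipulation — is that the Künneth map
\[
E^*(BA') \otimes_{E^*} E^*(BC_{p^{k_m}}) \longrightarrow E^*(BA' \times BC_{p^{k_m}})
\]
is an isomorphism of $E^*$-algebras. The previous proposition combined with Weierstrass preparation (exactly as cited in the proof of \Cref{HomLT}) shows that $E^*(BC_{p^{k}}) \cong E^*\powser{x}/([p^{k}]_{\G_u}(x))$ is finitely generated and free as an $E^*$-module of rank $p^{kn}$. Inductively, $E^*(BA')$ is likewise finitely generated and free over $E^*$. The Atiyah-Hirzebruch spectral sequence for the product (or equivalently the Milnor sequence combined with the finite-freeness, so that $\lim^1$ vanishes trivially) then collapses the Künneth map to an isomorphism.

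Once the Künneth isomorphism is in hand, the only remaining task is to identify the right-hand side algebraically. Because both tensor factors are finitely generated and free over $E^*$, there is no distinction between the algebraic and the completed tensor product, and the natural map
\[
E^*\powser{x_1,\ldots,x_{m-1}}/([p^{k_1}]_{\G_u}(x_1), \ldots, [p^{k_{m-1}}]_{\G_u}(x_{m-1})) \otimes_{E^*} E^*\powser{x_m}/([p^{k_m}]_{\G_u}(x_m))
\]
\[
\longrightarrow E^*\powser{x_1,\ldots,x_m}/([p^{k_1}]_{\G_u}(x_1), \ldots, [p^{k_m}]_{\G_u}(x_m))
\]
is an isomorphism of $E^*$-algebras (each side is free of the same rank $\prod_i p^{k_i n}$, and the map is surjective by inspection on generators). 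Combining this with the Künneth identification of the left-hand side with $E^*(BA)$ yields the desired presentation.

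The main potential obstacle is the Künneth isomorphism itself, since $BA'$ and $BC_{p^{k_m}}$ are not finite CW complexes. However, because $E^*(BC_{p^{k_m}})$ is a finitely generated free $E^*$-module concentrated in even degrees, the Atiyah-Hirzebruch spectral sequence for $BA' \times BC_{p^{k_m}}$ degenerates for the same filtration-and-$\lim^1$ reasons that make the Milnor-sequence argument work for $E^*(BS^1)$ in \Cref{S1}, and this is the standard route to Künneth in this setting.
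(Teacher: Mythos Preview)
Your proof is correct and follows exactly the route the paper indicates: the paper does not give a formal proof of this proposition but simply remarks beforehand that, since $E^*(BC_{p^k})$ is finitely generated and free over $E^*$ by Weierstrass preparation, one can use the K\"unneth isomorphism to bootstrap from the cyclic case. Your write-up is in fact more detailed than the paper's, supplying the induction explicitly and addressing why K\"unneth applies despite the spaces not being finite CW complexes.
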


In this article, we will restrict our attention to the $0$th cohomology ring $E^0(X)$. Since $E$ is even periodic, this commutative ring contains plenty of information. The isomorphism of Proposition \ref{abcomputation} induces an isomorphism of $E^0$-algebras
\[
E^0(BA) \cong E^0\powser{x_1, \ldots, x_m}/([p^{k_1}]_{\G_u}(x), \ldots, [p^{k_m}]_{\G_u}(x)).
\]
Given the proof of Corollary \ref{HomLT}, the careful reader may already recognize a close relationship between $E^0(BA)$ and $\Hom(A, \Gu)$.


Now that we have calculated the $E$-cohomology of finite abelian groups we describe some other properties of Morava $E$-theory. The most important theorem regarding Morava $E$-theory is due to Goerss, Hopkins, and Miller \cite{structuredmoravae} and we will endeavor to describe it now. Given a pair $(\F, \ka)$, we have produced a homology theory $E(\F, \ka)_*(-)$. Let $\FG$ denote the category with objects pairs $(\F, \ka)$ of a perfect field of characteristic $p$ and a finite height formal group over $\ka$. A morphism from $(\F, \ka)$ to $(\F', \ka')$ is given by a map $j \colon \ka \to \ka'$ and an isomorphism of formal group laws $j^*\F \to \F'$. This category is equivalent to the category called $\FGL$ in \cite{structuredmoravae}. Goerss, Hopkins, and Miller produce a fully faithful functor
\[
\xymatrix{\FG \ar[r]^-{E(-,-)} & E_{\infty}\text{-ring spectra}}
\]
such that the underlying homology theory of the $E_{\infty}$-ring spectrum $E(\F, \ka)$ is $E(\F, \ka)_*(-)$. Moreover, they show that the space of $E_{\infty}$-ring spectra $R$ with $E(\F, \ka)_*R$ an algebra object in graded $E(\F, \ka)_*E(\F, \ka)$-comodules isomorphic to $E(\F, \ka)_*E(\F, \ka)$ is equivalent to $B\Aut_{\FG}(\F,\ka)$. Thus the group of automorphisms of $E(\F,\ka)$ as an $E_{\infty}$-ring spectrum is the extended stabilizer group $\Aut_{\FG}(\F,\ka) \cong \Aut(\F / \ka) \rtimes \Gal(\ka,\F_p)$. It follows that $\Aut(\F / \ka)$ acts on $E(\F,\ka)^*(X)$ by ring maps for any space $X$. 




In the homotopy category, the $E_{\infty}$-ring structure on Morava $E$-theory gives rise to power operations. For any $E_{\infty}$-ring spectrum, these are constructed as follows: Let $X$ be a space and let $\Sigma^{\infty}_{+}X \to E$ be a an element of $E^0(X)$. Applying the extended powers functor $(E\Sigma_m)_+ \wedge_{\Sigma_m} (-)^{\wedge m}$ to both sides produces a map of spectra
\[
(E\Sigma_m)_+ \wedge_{\Sigma_m} (\Sigma^{\infty}_{+}X)^{\wedge m} \simeq \Sigma^{\infty}_{+}(E\Sigma_m \times_{\Sigma_m} X^{\times m}) \to (E\Sigma_m)_+ \wedge_{\Sigma_m} E^{\wedge m}.
\] 
Now the $E_{\infty}$-ring structure on $E$ gives us a map 
\[
(E\Sigma_m)_+ \wedge_{\Sigma_m} E^{\wedge m} \to E.
\]
Composing these two maps produces the $m$th total power operation
\[
\Po_m \colon E^0(X) \to E^0(E\Sigma_m \times_{\Sigma_m} X^{\times m}),
\]
a multiplicative, non-additive map. We will solely be concerned with the case $X = BG$, when $G$ is a finite group. There is an equivalence
\[
E\Sigma_m \times_{\Sigma_m} BG^{\times m} \simeq BG \wr \Sigma_m,
\]
where $G \wr \Sigma_m = G^{\times m} \rtimes \Sigma_m$ is the wreath product. In this case, the total power operation takes the form
\begin{equation} \label{totalpower}
\Po_m \colon E^0(BG) \to E^0(BG \wr \Sigma_m).
\end{equation}
We will make use of variants of the total power operation which we will introduce after a brief interlude regarding transfers.

As $E^*(-)$ is a cohomology theory, it has a theory of transfer maps for finite covers. For $H \subset G$, the map $BH \to BG$ is a finite cover and thus there is a transfer map in $E$-cohomology of the form
\[
\Tr_E \colon E^0(BH) \to E^0(BG).
\]
This is a map of $E^0(BG)$-modules for the $E^0(BG)$-module structure on the domain from the restriction map. We will see that these transfers play an important role in understanding the additive properties of power operations and that they interact nicely with the character maps of Hopkins, Kuhn, and Ravenel. In particular, these transfers can be used to define several important ideals. Assume $i,j>0$ and that $i+j = m$, then there is a transfer map
\[
\Tr_E \colon E^0(BG \wr (\Sigma_i \times \Sigma_j)) \to E^0(BG \wr \Sigma_m)
\]
induced by the inclusion $G \wr (\Sigma_i \times \Sigma_j) \subset G \wr \Sigma_m$. Summing over these maps as $i$ and $j$ vary gives a map of $E^0(BG \wr \Sigma_m)$-modules
\[
\Oplus{i,j}\Tr_E \colon \Oplus{i,j}E^0(BG \wr (\Sigma_i \times \Sigma_j)) \to E^0(BG \wr \Sigma_m).
\]
Define the ideal $J_{\tr} \subset E^0(BG \wr \Sigma_{m})$ by
\[
J_{\tr} = \im(\Oplus{i,j}\Tr_E).
\]
When $G$ is trivial, we will refer to this ideal as $I_{\tr} \subset E^0(B\Sigma_m)$.


Since $\Sigma_m$ acts trivially on the image of the diagonal $\Delta \colon G \to G^m$, we have a map of spaces $BG \times B\Sigma_m \to BG \wr \Sigma_m$. Restriction along this map gives us the power operation
\[
P_m \colon E^0(BG) \lra{\Po_m} E^0(BG \wr \Sigma_m) \to E^0(BG \times B \Sigma_m).
\] 
Since $E^0(B\Sigma_m)$ is a free $E^0$-module by \cite[Theorem 3.2]{etheorysym}, we have a Kunneth isomorphism
\[
E^0(BG \times B \Sigma_m) \cong E^0(BG) \otimes_{E^0} E^0(B\Sigma_m).
\]
We will also denote the resulting map by
\[
P_m \colon E^0(BG) \to E^0(BG) \otimes_{E^0} E^0(B\Sigma_m)
\]
and refer to it as the power operation as well. Both the total power operation $\Po_m$ and the power operation $P_m$ are multiplicative non-additive maps. Let $\iota \colon * \to B\Sigma_m$ be the inclusion of base point. The maps $\Po_m$ and $P_m$ are called power operations because the composite
\[
E^0(BG) \lra{P_m} E^0(BG) \otimes_{E^0} E^0(B\Sigma_m) \lra{\id \otimes \iota^*} E^0(BG) 
\]
is the $m$th power map $(-)^m \colon E^0(BG) \to E^0(BG)$. 


The failure of the total power operation to be additive is controlled by the ideal $J_{\tr}$. In other words, $J_{\tr} \subset E^0(BG \wr \Sigma_{m})$ is the smallest ideal such that the quotient
\begin{equation} \label{additivetotal}
\Po_m/J_{\tr} \colon E^0(BG) \to E^0(BG \wr \Sigma_m)/J_{\tr}
\end{equation}
is a map of commutative rings. The same relationship holds for the ideal $I_{\tr}$ and the power operation $P_m$, the quotient map
\[
P_m/I_{\tr} \colon E^0(BG) \to E^0(BG) \otimes_{E^0} E^0(B\Sigma_m)/I_{tr}
\]
is a map of commutative rings.

The total power operation and its variants satisfy several identities that hold for any $E_{\infty}$-ring spectrum. These are described in great detail in Chapters I and VIII of \cite{bmms}. For the purposes of this paper, we will need the following identity, which follows from \cite[VIII Proposition 1.1.(i)]{bmms}, that will come in handy for the induction arguments in Section \ref{secmain}: Let $i,j > 0$ such that $i+j = m$ and let $\Delta_{i,j} \colon \Sigma_i \times \Sigma_j \to \Sigma_m$ be the canonical map and also recall that $\Delta \colon G \to G \times G$ is the diagonal. The following diagram commutes
\begin{equation} \label{powerdiagram}
\xymatrix{E^0(BG) \ar[rr]^-{P_m} \ar[d]_-{P_i \times P_j} & & E^0(BG \times B\Sigma_m) \ar[d]^-{\Delta_{i,j}^{*}} \\ E^0(BG \times B\Sigma_i \times BG \times B\Sigma_j) \ar[rr]^-{\Delta^*} & & E^0(BG \times B\Sigma_i \times B\Sigma_j),}
\end{equation}
where $P_i \times P_j$ is the external product.
%
%
%
%

%

\section{Character theory} \label{sec:characters}

Since Morava $E$-theory is a well-behaved generalization of $p$-adic $K$-theory, one might wonder if some of the more geometric properties of $K$-theory can be extended to $E$-theory. The character map in classical representation theory is an injective map of commutative rings from the representation ring of $G$ to the ring of class functions on $G$ taking values in $\C$
\[
\chi \colon R(G) \to \Cl(G,\C).
\]
It is the map taking a representation $\rho \colon G \to \GL_n(\C)$ to the class function sending $g$ to $\Tr(\rho(g))$, the trace of $\rho(g)$. It has the important property that 
\[
\C \otimes \chi \colon \C \otimes_{\Z} R(G) \to \Cl(G,\C)
\]
is an isomorphism. 

In \cite{hkr}, Hopkins, Kuhn, and Ravenel tackled the question of generalizing the character map to Morava $E$-theory. They did not have geometric cocycles (such as $G$-representations) available to them, but they did have a generalization of the representation ring of $G$: $E^0(BG)$. Following the construction of the classical character map, they produce an $E^0$-algebra $C_0$ that plays the role of $\C$ above, a ring of ``generalized" class functions $\Cl_n(G,C_0)$, and a character map
\[
\chi \colon E^0(BG) \to \Cl_n(G,C_0)
\]
with the property that the induced map
\[
C_0 \otimes_{E^0} E^0(BG) \to \Cl_n(G,C_0)
\]
is an isomorphism. In this section we will give a short exposition of their work on character theory and explain the relationship between their character map and the action of the stablizer group on $E^0(BG)$. A longer exposition on character theory can be found in \cite[Appendix A.1]{eric}, written by the author, and of course, the best source is the original \cite{hkr}.

We begin by describing Proposition 5.12 in \cite{hkr}, which draws a connection between the group cohomology calculations of Section \ref{etheory} and the moduli problem $\Hom(A,\Gu)$ of Section \ref{LT}. Recall that $A^* = \hom(A,S^1)$ is the Pontryagin dual of $A$.

\begin{prop} \label{eabhom}
Let $A$ be a finite abelian group. There is a canonical isomorphism of $E^0$-algebras
\[
E^0(BA) \cong \Sect_{\Hom(A^*,\Gu)}
\]
compatible with the action of the stabilizer group.
\end{prop}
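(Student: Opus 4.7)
The plan is to construct a canonical natural $E^0$-algebra map $\Sect_{\Hom(A^*,\Gu)} \to E^0(BA)$ and then verify it is an isomorphism by reducing to the cyclic case, where it follows from matching the two explicit computations from Sections \ref{LT} and \ref{etheory}. Compatibility with the Morava stabilizer group will then be automatic from naturality of the construction.

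First I would recall that the Landweber construction yields a canonical isomorphism $\Spf E^0(BS^1) \cong \Gu$ of formal groups over $\Spf E^0$, compatible with the $H$-space structure on $BS^1$. Given a finite abelian $p$-group $A$ and a character $\chi \in A^* = \hom(A,S^1)$, the classifying map $B\chi \colon BA \to BS^1$ pulls the canonical coordinate class back to an element of the augmentation ideal of $E^0(BA)$, i.e., an $E^0(BA)$-valued point of $\Gu$. Because $BS^1$ is a topological abelian group, the assignment $\chi \mapsto B\chi^*$ defines a group homomorphism from $A^*$ to $\Gu(E^0(BA))$, or equivalently a point of $\Hom(A^*, \Gu)(E^0(BA))$. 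By the universal property established in Corollary \ref{HomLT}, this data is classified by a canonical $E^0$-algebra map
\[
\Sect_{\Hom(A^*,\Gu)} \to E^0(BA).
\]
No coordinate on $\Gu$ is chosen in this construction, so the map is genuinely canonical.

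Next I would show this map is an isomorphism by induction on the number of cyclic factors. For $A = \Cp{k}$, Pontryagin duality identifies $A^* \cong \Cp{k}$, and after choosing a coordinate on $\Gu$, the proof of Corollary \ref{HomLT} identifies $\Sect_{\Hom(\Cp{k},\Gu)}$ with $E^0\powser{x}/([p^k]_{\Gu}(x))$, which agrees with $E^0(B\Cp{k})$ by Proposition \ref{abcomputation}; the natural map sends the standard generator to the pullback of $x$ along a generator of $A^*$, which is an isomorphism by inspection. For a decomposition $A \cong A_1 \oplus A_2$, both sides factor as tensor products over $E^0$: on the moduli side, $\Hom(A_1^* \oplus A_2^*, \Gu) \cong \Hom(A_1^*, \Gu) \times_{\LT} \Hom(A_2^*, \Gu)$, while on the cohomology side the freeness of $E^0(BA_i)$ over $E^0$ yields a Künneth isomorphism. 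The natural map respects these decompositions, so the inductive step goes through and the cyclic case implies the general case.

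Finally, compatibility with $\Aut(\F/\ka)$ is inherited from naturality. The stabilizer acts on $\Spf E^0(BS^1) \cong \Gu$ by Equation \eqref{stabaction}, since the Goerss-Hopkins-Miller theorem ensures that the action on $E$ as an $E_\infty$-ring spectrum realizes the action of the stabilizer on the universal deformation. As the natural map is built only from this formal group and the functorial construction $\chi \mapsto B\chi^*$, which involves no non-equivariant choice, the isomorphism intertwines the two actions. The main obstacle lies in the deformation-theoretic content of the identification $\Spf E^0(BS^1) \cong \Gu$: one must check that this isomorphism respects not only the formal group structures but also the identification of their reductions modulo the maximal ideal with $\F$ over $\ka$, so that the stabilizer action on $E$ really corresponds to the action on $\Gu$ of Equation \eqref{stabaction}. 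This is ultimately an input from the Goerss-Hopkins-Miller picture rather than something internal to the computational argument above.
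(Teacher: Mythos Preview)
Your proposal is correct and follows essentially the same strategy as the paper: both build the map by sending a character $\chi \in A^*$ to the $E$-cohomology pullback along $B\chi \colon BA \to BS^1$, using the identification $\Spf E^0(BS^1) \cong \Gu$. The only real differences are in the bookkeeping: the paper verifies the isomorphism at the level of functors of points (building an inverse by noting that a generating set of $A^*$ embeds $A$ into a torus), whereas you verify it by induction on cyclic factors via K\"unneth and the explicit computations; and for the stabilizer compatibility the paper reduces to $S^1$ and cites Rezk's functorial Landweber construction rather than Goerss--Hopkins--Miller, which is the sharper reference since only the cohomology-theory-level action is needed here.
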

\begin{proof}
Recall from the proof of Corollary \ref{HomLT} that $\Sect_{\Hom(A^*,\Gu)} = \Sect_{\Gu^{A^*}}$. Thus, given a complete local $E^0$-algebra $R$, we need to produce a canonical isomorphism 
\[
\hom_{E^0\text{-alg}}(E^0(BA), R) \cong \hom(A^*,\Gu(R)).
\]
Let $f \colon A \to S^1$ be an element of $A^*$ and let $E^0(BA) \to R$ be a map of $E^0$-algebras. Applying $E$-cohomology to $f$ gives a map $E^0(BS^1) \to E^0(BA)$, composing this with the map $E^0(BA) \to R$ gives us a map $E^0(BS^1) \to R$ which is equivalent to a point in $\Gu(R)$. The trick to building a map backwards is to notice that a generating set in $A^*$ determines an injection from $A$ into a finite dimensional torus.

Now we will sketch why the isomorphism respects the action of $\Aut(\F/\ka)$. The point is that we can reduce to the case $A = S^1$ and the stabilizer group acts on $\Gu$ (purposefully not over $E^0 = \Sect_{\LT}$) viewed as a functor from complete local rings to groupoids. This action can be seen in the construction of $E^*(-)$ as a cohomology theory by noting that the Landweber exact functor theorem can be upgraded in this situation to a functor from $\FGL$ to cohomology theories. 

We view $\Gu$ as a functor from complete local rings to groupoids in the usual way by sending a complete local ring $(R,\m)$ to the groupoid of deformations equipped with a marked point $(\Z \to \G, i, \tau)$ and $\star$-isomorphisms compatible with the map from $\Z$. The stabilizer group acts in the usual way. There is another way to describe this action, though. Associated to $s \in \Aut(\F,\ka)$ is the deformation
\[
(\Gu, \id_{\ka}, s).
\]
This determines an isomorphism of commutative rings $f_s \colon E^0 \to E^0$ such that $\pi^*(f_s)= \id_{\ka}$ and by Theorem \ref{lubintate} there is a unique isomorphism 
\[
g_s \colon \Gu \lra{\cong} f_{s}^*\Gu
\]
such that $\pi^*(g_s) = s$. Now we define the action of $s \in \Aut(\F,\ka)$ on $\Gu$ to be the composite along the top
\[
\xymatrix{\Gu \ar[r]^-{g_s} & f_{s}^* \Gu \ar[r] \ar[d] & \Gu \ar[d] \\ & \LT \ar[r]^{\Spf(f_s)} & \LT,}
\]
where the square is the pullback square defining $f_{s}^*\Gu$. We connect to topology as follows: it suffices to prove the result for $A = S^1$ in which case we must show that the $\Aut(\F,\ka)$-action on $E^0(BS^1)$ and the $\Aut(\F, \ka)$ action on $\Gu$ described above agree. This follows from Rezk's functorial construction of the Morava $E$-theories (as cohomology theories) using the Landweber exact functor theorem in Section 6.7 of \cite{Rezk-Notes}.

\end{proof}

The ring $C_0$ is defined to be the rationalization of the Drinfeld ring $\Sect_{\Level(\qz,\Gu)}$. It has the property that for all $k \geq 0$, there is a canonical isomorphism of group schemes
\[
C_0 \otimes_{E^0} \Gu[p^k] \cong \qzpk.
\]
In fact, this can be taken as the defining property of $C_0$, but we will not need this fact in this document. Since $C_0$ is the rationalization of the Drinfeld ring, it comes equipped with a right action of $\Isog(\qz)$ by ring maps and an action of $\Aut(\qz)$ by $\Q \otimes E_0$-algebra maps. In fact, $C_0$ is an $\Aut(\qz)$-Galois extension of $\Q \otimes E^0$ so there is an isomorphism
\[
C_{0}^{\Aut(\qz)} \cong \Q \otimes E^0.
\]

Let $\hom(\lat, G)$ be the set of continuous homomorphisms from $\lat$ to $G$. This set is in bijective correspondence with the set of $n$-tuples of pairwise commuting $p$-power order elements of $G$. There is an action of $G$ on this set by conjugation. We define 
\[
\Cl_n(G,C_0) = \Prod{\hom(\lat, G)_{/\sim}} C_0, 
\]
where $\hom(\lat, G)_{/\sim}$ is the set of conjugacy classes of maps from $\lat$ to $G$. Thus $\Cl_n(G,C_0)$ is the ring of conjugation invariant functions on $\hom(\lat,G)$ taking values in $C_0$.

The character map is defined as follows: Given a conjugacy class $\al \colon \lat \to G$, there exists a $k \in \N$ such that $\al$ factors through $\latpk$. Applying the classifying space functor to this map gives a map of spaces
\[
B\latpk \to BG.
\]
Applying $E$-cohomology to this map gives a map of $E^0$-algebras
\[
\al^* \colon E^0(BG) \to E^0(B\latpk).
\]
By Proposition \ref{eabhom}, the codomain is canonically isomorphic to $\Sect_{\Hom(\qzpk, \Gu)}$, which is the ring corepresenting $\Hom(\qzpk,\Gu)$. There are canonical forgetful maps of moduli problems $\Level(\qz,\Gu) \to \Level(\qzpk,\Gu) \to \Hom(\qzpk,\Gu)$ over $\LT$ that induce maps of $E_0$-algebras
\[
\Sect_{\Hom(\qzpk, \Gu)} \to \Sect_{\Level(\qzpk,\Gu)} \to \Sect_{\Level(\qz,\Gu)} \to C_0.
\]
Composing these gives an $E^0$-algebra map
\begin{align} \label{charactermap}
\chi_{[\al]} \colon E^0(BG) \lra{\al^*} E^0(B\latpk) \cong \Sect_{\Hom(\qzpk, \Gu)} \to \Sect_{\Level(\qzpk,\Gu)} \to \Sect_{\Level(\qz,\Gu)} \to C_0.
\end{align}
Putting these together for all of the conjugacy classes in $\hom(\lat, G)$ gives the character map
\[
\chi \colon E^0(BG) \to \Cl_n(G,C_0).
\]

\begin{example} \label{characterab}
Let $A$ be a finite abelian group and let $\al \colon \lat \to A$ be a map. In this case, $\chi_{[\al]}$ admits an interpretation completely in terms of the moduli problems in Section \ref{LT}. Equation \eqref{charactermap} implies that we may view $\chi_{[\al]}$ as landing in $\Sect_{\Level(\qz,\Gu)}$. Proposition \ref{eabhom} implies that the domain is canonically isomorphic to $\Sect_{\Hom(A^*,\Gu)}$. Putting these observations together, $\chi_{[\al]}$ gives us a map
\[
\Sect_{\Hom(A^*,\Gu)} \to \Sect_{\Level(\qz,\Gu)}
\]
or on the level of moduli problems, a map
\[
\Level(\qz,\Gu) \to \Hom(A^*,\Gu).
\]
Unwrapping the definition of $\chi_{[\al]}$, when this map is applied to a complete local ring $(R,m)$, it sends a deformation with level structure
\[
(l \colon \qz \to \G, i, \tau) 
\]
to the deformation equipped with homomorphism
\[
(A^* \lra{\al^*} \qz \lra{l} \G, i, \tau).
\]
\end{example}

There is a left action of $\Aut(\qz)$ on $\hom(\lat, G)_{/\sim}$ given by precomposition with the Pontryagin dual. Combining this with the right action of $\Aut(\qz)$ on $C_0$, there is a right action on $C_0$-valued function on $\hom(\lat, G)_{/\sim}$ (ie. $\Cl_n(G,C_0)$). Explicitly this action is defined as follows: Let $\phi \in \Aut(\qz)$, let $f \in \Cl_n(G,C_0)$, and let $[\al] \in \hom(\lat, G)_{/\sim}$ then
\[
(f \phi)([\al]) = (f([\al \phi^*])) \phi.
\]
To see that this is an action note that, for $\phi, \tau \in \Aut(\qz)$,
\[
((f \phi) \tau)([\al]) = (f \phi)([\al \tau^*])\tau = f([\al \tau^* \phi^*]) \phi \tau = f([\al (\phi \tau)^*]) \phi \tau.
\]

It turns out that the base change of the character map $\chi$ to $C_0$
\[
C_0 \otimes \chi \colon C_0 \otimes_{E^0} E^0(BG) \to \Cl_n(G,C_0)
\]
is equivariant with respect to the right $\Aut(\qz)$-action on the source given by the action of $\Aut(\qz)$ on the left tensor factor and the right $\Aut(\qz)$-action on the target given above.

\begin{thm} \cite[Theorem C]{hkr} \label{thmc}
The character map induces an isomorphism
\[
C_0 \otimes_{E^0} E^0(BG) \lra{\cong} \Cl_n(G,C_0)
\]
and taking $\Aut(\qz)$-fixed points gives an isomorphism
\[
\Q \otimes E^0(BG) \lra{\cong} \Cl_n(G,C_0)^{\Aut(\qz)}.
\]
\end{thm}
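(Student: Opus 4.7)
The plan is to first establish the isomorphism for finite abelian groups via direct algebro-geometric computation, then bootstrap to arbitrary finite groups using the Mackey-functor structure on both sides, and finally derive the fixed-point statement from Galois descent. For $A$ finite abelian, Proposition \ref{eabhom} supplies the identification $E^0(BA) \cong \Sect_{\Hom(A^*, \Gu)}$, and Example \ref{characterab} identifies each $\chi_{[\al]}$ with the ring map induced by postcomposing the tautological $\qz \to \Gu$ with the Pontryagin dual $\al^* \colon A^* \to \qz$. After base change along $E^0 \to C_0$, the defining property $C_0 \otimes_{E^0} \Gu[p^k] \cong \qzpk$ trivializes all $p$-power torsion of $\Gu$, and since continuous maps from the finite abelian $A^*$ to $\Gu$ factor through $p$-torsion of sufficiently large exponent, I obtain
\[
C_0 \otimes_{E^0} \Sect_{\Hom(A^*, \Gu)} \cong \Prod{\hom(A^*, \qz)} C_0.
\]
Pontryagin duality identifies $\hom(A^*, \qz)$ with $\hom(\lat, A) = \hom(\lat, A)_{/\sim}$, and unwrapping Example \ref{characterab} shows that the base-changed character map is the identity under these identifications, settling the abelian case.

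The step from abelian to arbitrary $G$ is where I expect the main work to lie. My approach would be to regard both $M(G) := C_0 \otimes_{E^0} E^0(BG)$ and $N(G) := \Cl_n(G, C_0)$ as functors of $G$ equipped with compatible restriction and transfer maps for subgroup inclusions: the transfers on the topological side are those recalled in Section \ref{etheory}, while on $N(G)$ they are summation along cosets. The indexing set $\hom(\lat, G)_{/\sim}$ decomposes as $\coprod_{[\al]} \{[\al]\}$, and every $\al \colon \lat \to G$ factors through the finite abelian $p$-subgroup $\im(\al) \subseteq G$. Using the restriction maps along $B\im(\al) \hookrightarrow BG$ for all such $\al$, together with a Frobenius-reciprocity/Artin-induction argument valid after rationalization, I would express both $M(G)$ and $N(G)$ as equalizers over the system of abelian $p$-subgroups of $G$ modulo conjugation, reducing the claim to the abelian case already established. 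I expect the main obstacle to be verifying that the character map intertwines the topologically defined transfer in $E$-theory with the combinatorial transfer on class functions; this ultimately comes down to the compatibility of Pontryagin duality with the relevant quotient maps, a compatibility already implicit in the construction of $\chi$.

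For the fixed-point statement, since $C_0$ is an $\Aut(\qz)$-Galois extension of $\Q \otimes E^0$, faithfully flat Galois descent gives $(C_0 \otimes_{\Q \otimes E^0} V)^{\Aut(\qz)} \cong V$ for every $\Q \otimes E^0$-module $V$. Applied to $V = \Q \otimes E^0(BG)$, this identifies $M(G)^{\Aut(\qz)}$ with $\Q \otimes E^0(BG)$. The character map is $\Aut(\qz)$-equivariant for the action described immediately above the theorem, so taking $\Aut(\qz)$-fixed points in the first isomorphism $M(G) \cong N(G)$ yields the second claimed isomorphism.
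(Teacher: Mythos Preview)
The paper does not supply its own proof of this theorem: it is stated with a citation to \cite[Theorem~C]{hkr} and used as a black box. So there is no in-paper argument to compare your proposal against.

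That said, your outline is essentially the strategy of the original Hopkins--Kuhn--Ravenel proof. The abelian case via Proposition~\ref{eabhom}, Example~\ref{characterab}, and the trivialization $C_0 \otimes_{E^0} \Gu[p^k] \cong \qzpk$ is correct. Your identification of the ``main obstacle'' --- compatibility of the topological transfer with the combinatorial one --- is exactly \cite[Theorem~D]{hkr}, which this paper later records as Theorem~\ref{thmd}; so within the paper's logical flow that step is already available. The Galois-descent argument for the fixed-point statement is also the right one, and the paper explicitly records that $C_0$ is an $\Aut(\qz)$-Galois extension of $\Q \otimes E^0$ just before the theorem.

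One point where your sketch is looser than the actual HKR argument: the passage from abelian to general $G$ is not literally an equalizer computation. HKR instead show that both $G \mapsto C_0 \otimes_{E^0} E^0(BG)$ and $G \mapsto \Cl_n(G,C_0)$ are Mackey functors with compatible Green-functor module structures, and then invoke a rational Artin-induction theorem (every class function is a rational combination of functions induced from abelian subgroups) to conclude. Your ``Frobenius-reciprocity/Artin-induction'' phrase points at this, but the equalizer description would need justification you have not given. If you want to fill this in, the cleanest route is to show directly that after tensoring with $C_0$ the restriction maps to abelian subgroups are jointly injective (this is close to Lemma~\ref{lem2}) and then use transfers from abelian subgroups, which become split after inverting $|G|$, to produce surjectivity.
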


The primary goal of this document is to give an exposition of the relationship between the power operations of Section \ref{etheory} and the character map above. However, we are already in the position to explain the relationship between the stabilizer group action on $E^0(BG)$ and the character map, so we will do that now.

Since the stabilizer group $\Aut(\F / \ka)$ acts on $E$ by $E_{\infty}$-ring maps, it acts on the function spectrum $E^{X}$ by $E_{\infty}$-ring maps for any space $X$ and given a map of spaces $X \to Y$, the induced map of $E_{\infty}$-ring spectra
\[
E^Y \to E^X
\]
is $\Aut(\F / \ka)$-equivariant. Recall from Section \ref{LT} that $\Aut(\F / \ka)$ also acts on $\Sect_{\Hom(\qzpk,\Gu)}$, $\Sect_{\Level(\qzpk,\Gu)}$ and $\Sect_{\Level(\qz,\Gu)}$ by commutative ring maps and thus there is also an action of $\Aut(\F / \ka)$ on $C_0$. Proposition \ref{eabhom} implies that the canonical isomorphism
\[
E^0(B\latpk) \cong \Sect_{\Hom(\qzpk,\Gu)}
\]
is $\Aut(\F / \ka)$-equivariant. Putting all of this together, we see that the composite of Equation \ref{charactermap}
\[
E^0(BG) \to E^0(B\latpk) \cong \Sect_{\Hom(\qzpk,\Gu)} \to \Sect_{\Level(\qzpk,\Gu)} \to \Sect_{\Level(\qz,\Gu)} \to C_0
\]
is $\Aut(\F / \ka)$-equivariant. Since the character map is a product of maps of this form, we may conclude the following proposition. 

\begin{prop}
The Hopkins-Kuhn-Ravenel character map
\[
\chi \colon E^0(BG) \to \Cl_n(G,C_0)
\]
is $\Aut(\F / \ka)$-equivariant for the canonical action of $\Aut(\F / \ka)$ on $E^0(BG)$ and the diagonal action of $\Aut(\F / \ka)$ on $\Cl_n(G,C_0)$. 
\end{prop}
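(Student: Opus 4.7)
The plan is to verify equivariance of the character map one component at a time by decomposing $\chi$ into its defining components $\chi_{[\al]}$ for $[\al] \in \hom(\lat,G)_{/\sim}$ and checking that each intermediate map in the composite defining $\chi_{[\al]}$ is $\Aut(\F/\ka)$-equivariant. Since $\Aut(\F/\ka)$ has no action on $\hom(\lat,G)_{/\sim}$ itself (unlike $\Aut(\qz)$), the diagonal action on $\Cl_n(G,C_0) = \prod_{[\al]}C_0$ simply acts coordinatewise via the action on $C_0$. Hence it suffices to show that for each conjugacy class $[\al]$ and each $s \in \Aut(\F/\ka)$, we have $s \cdot \chi_{[\al]}(x) = \chi_{[\al]}(s\cdot x)$ for all $x \in E^0(BG)$.

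Recall the composite
\[
\chi_{[\al]} \colon E^0(BG) \lra{\al^*} E^0(B\latpk) \cong \Sect_{\Hom(\qzpk,\Gu)} \to \Sect_{\Level(\qzpk,\Gu)} \to \Sect_{\Level(\qz,\Gu)} \to C_0.
\]
First, the map $\al^*$ is induced by applying $E^0(-)$ to the map of spaces $B\latpk \to BG$. Because the Goerss-Hopkins-Miller theorem makes $\Aut(\F/\ka)$ act on the spectrum $E$ by $E_\infty$-ring maps, it acts on $E^{X}$ naturally in $X$, so every map induced from a map of spaces is automatically equivariant. Second, the isomorphism $E^0(B\latpk) \cong \Sect_{\Hom(\qzpk,\Gu)}$ is $\Aut(\F/\ka)$-equivariant by Proposition \ref{eabhom}. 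Third, the three remaining arrows are the maps on rings of functions induced by the forgetful natural transformations $\Level(\qz,\Gu) \to \Level(\qzpk,\Gu) \to \Hom(\qzpk,\Gu)$ (together with the rationalization to $C_0$); since the $\Aut(\F/\ka)$-action defined by Equation \eqref{stabaction} is natural in the auxiliary structure (homomorphism, level structure, or truncation), each of these maps is equivariant. Composing these equivariant maps yields the desired equivariance of $\chi_{[\al]}$.

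Assembling the components over all conjugacy classes gives equivariance of $\chi$ with respect to the diagonal action, which is what was claimed. No real obstacle arises — the only point requiring care is checking that the rings $\Sect_{\Level(\qzpk,\Gu)}$ and $\Sect_{\Level(\qz,\Gu)}$ carry compatible stabilizer actions and that the transition maps between them respect these actions, but this is immediate from the coordinate-free description in Section \ref{LT}, where $\Aut(\F/\ka)$ acts only on the isomorphism datum $\tau$ while leaving the homomorphism/level structure untouched.
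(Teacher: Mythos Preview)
Your argument is correct and matches the paper's own proof essentially verbatim: decompose $\chi$ into its components $\chi_{[\al]}$, use naturality of the $\Aut(\F/\ka)$-action on $E^0(-)$ for the first map, invoke Proposition~\ref{eabhom} for the middle isomorphism, and use the description of the stabilizer action on the moduli problems for the remaining forgetful maps. The only difference is cosmetic---the paper presents this discussion in the paragraph preceding the proposition rather than as a formal proof.
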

%
%

\section{Transfers and conjugacy classes of tuples in symmetric groups}
In keeping with the theme of this article, in this section we will describe the relationship between transfer maps for Morava $E$-theory and the character map. In Theorem D of \cite{hkr}, Hopkins, Kuhn, and Ravenel show that a surprisingly simple function between generalized class functions, inspired by the formula for induction in representation theory, is compatible with the transfer map for Morava $E$-theory. 


To describe Theorem D of \cite{hkr}, we need a bit of group-theoretic set-up. Let $H \subset G$ be a subgroup and let $[\al] \in \hom(\lat, G)_{/\sim}$. The set of cosets $G/H$ has a left action of $G$ and we can take the fixed points for the action of the image of $\al$ on $G/H$ to get the set $(G/H)^{\im \al}$. A key property of these cosets is that if $gH \in (G/H)^{\im \al}$, then $\im(g^{-1} \al g) \subset H$. 


Mimicking the formula for the transfer in representation theory, Hopkins, Kuhn, and Ravenel define a transfer map on generalized class functions,
\[
\Tr_{C_0} \colon \Cl_n(H, C_0) \to \Cl_n(G, C_0),
\]
for $H \subset G$ by the formula
\begin{equation} \label{transfereqn}
\Tr_{C_0}(f)([\al]) = \sum_{gH \in (G/H)^{\im \al}} f([g^{-1} \al g]) 
\end{equation}
for a conjugacy class $[\al] \in \hom(\lat, G)_{/\sim}$ and a generalized class function $f \in \Cl_n(H,C_0)$. Note that the formula is independent of the implicit choice of system of representatives of $G/H$.


\begin{thm} \cite[Theorem D]{hkr} \label{thmd} For any finite group $G$ and subgroup $H \subset G$, there is a commutative diagram
\[
\xymatrix{E^0(BH) \ar[r]^{\Tr_{E}} \ar[d]_{\chi} & E^0(BG) \ar[d]^{\chi} \\ \Cl_n(H,C_0) \ar[r]^{\Tr_{C_0}} & \Cl_n(G,C_0).}
\]
\end{thm}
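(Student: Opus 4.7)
Since $\Cl_n(G, C_0) = \prod_{[\al]} C_0$ is a product indexed by conjugacy classes, it suffices to verify the identity
\[
\chi_{[\al]}(\Tr_E x) = \sum_{gH \in (G/H)^{\im \al}} \chi_{[g^{-1}\al g]}(x), \qquad x \in E^0(BH),
\]
for each fixed class $[\al] \in \hom(\lat, G)_{/\sim}$. Pick $k$ so that $\al$ factors through $\latpk$ and write $\epsilon \colon E^0(B\latpk) \to C_0$ for the composite of Equation \eqref{charactermap}, so that $\chi_{[\al]} = \epsilon \circ \al^*$. The plan is to compute $\al^* \circ \Tr_E$ via the Mackey base change formula, and then to show that after applying $\epsilon$ only the $\im \al$-fixed contributions survive.

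The homotopy pullback of $BH \to BG \leftarrow B\latpk$ decomposes as
\[
\xymatrix{ \coprod_{gH \in \latpk \backslash G / H} BK_g \ar[r] \ar[d] & BH \ar[d] \\ B\latpk \ar[r]^-{B\al} & BG, }
\]
where $K_g := \al^{-1}(gHg^{-1}) \subset \latpk$ is the stabilizer of $gH$ and the top horizontal map is given on the $gH$-component by $c_g \colon K_g \to H$, $a \mapsto g^{-1}\al(a)g$. Base change for the transfer produces
\[
\al^*(\Tr_E x) = \sum_{gH \in \latpk \backslash G/H} \Tr_E^{K_g \subset \latpk}(c_g^* x).
\]
For a \emph{fixed} orbit, i.e.\ $gH$ with $K_g = \latpk$ (equivalently $gH \in (G/H)^{\im \al}$), the inner transfer is the identity, and unraveling the character map at the class $[g^{-1}\al g] \in \hom(\lat, H)_{/\sim}$ gives $\epsilon(c_g^* x) = \chi_{[g^{-1}\al g]}(x)$. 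These terms reproduce the desired right-hand side exactly.

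It remains to verify that the non-fixed terms vanish after $\epsilon$, i.e.\ that for every proper subgroup $A \subsetneq \latpk$ the composite $\epsilon \circ \Tr_E^{A \subset \latpk} \colon E^0(BA) \to C_0$ is zero. By Proposition \ref{eabhom} and Pontryagin duality, $\Res \colon E^0(B\latpk) \to E^0(BA)$ is the surjection corresponding to the closed immersion $\Hom(A^*, \Gu) \hookrightarrow \Hom(\qzpk, \Gu)$, and its kernel $I \subset \Sect_{\Hom(\qzpk,\Gu)}$ is generated by the universal-homomorphism coordinates $x_a$ for $a$ ranging over a basis of the nontrivial quotient $(\latpk/A)^* \subset \qzpk$. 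Frobenius reciprocity gives $I \cdot \Tr_E(y) = \Tr_E(\Res(I) \cdot y) = 0$, so $\epsilon(\Tr_E(y)) \in C_0$ is annihilated by each $\epsilon(x_a) = l(a)$, where $l \colon \qz \to \Gu$ is the universal level structure. The defining property of $C_0$ recalled in Section \ref{sec:characters} gives an isomorphism $\qzpk \cong \Gu[p^k]_{C_0}$ induced by $l$, so each $l(a)$ is nonzero; since $C_0$ is a domain (the rationalization of the complete local domain $\Sect_{\Level(\qz,\Gu)}$), this forces $\epsilon(\Tr_E(y)) = 0$.

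The main obstacle is this vanishing step: it expresses algebraically the sharp distinction between arbitrary homomorphisms $\qzpk \to \Gu$ and level structures. Transfers from proper subgroups of $\latpk$ produce contributions supported on a closed ``non-generic'' locus in $\Hom(\qzpk, \Gu)$, while level structures refuse to lie in that locus, and this discrepancy is precisely what kills the non-fixed double-coset terms upon passage to $C_0$.
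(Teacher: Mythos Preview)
The paper does not supply its own proof of this theorem; it is simply quoted as \cite[Theorem~D]{hkr}. So there is no in-paper argument to compare against. That said, your approach is exactly the standard one (and is essentially how Hopkins--Kuhn--Ravenel prove it): pull back along $B\al$ using the Mackey double-coset formula, observe that the fixed double cosets reproduce the transfer formula on class functions, and show that the remaining terms---transfers from proper subgroups $A \subsetneq \latpk$---die under $\epsilon$.

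Your vanishing argument is correct but can be sharpened slightly, and doing so removes any worry about whether the infinite-level ring $C_0$ is a domain. Over the Drinfeld ring at level $k$ one has the Weierstrass-type factorization
\[
[p^k]_{\Gu}(x) \;=\; u \prod_{a \in \qzpk}(x - x_a)
\]
for a unit $u$; dividing by $x$ and evaluating at $x=0$ shows that $\prod_{a \neq 0} x_a$ equals $p^k$ up to a unit. Hence in $C_0 = \Q \otimes \Sect_{\Level(\qz,\Gu)}$ each $\epsilon(x_a)$ with $a \neq 0$ is a \emph{unit}, not merely nonzero, and the conclusion $\epsilon(\Tr_E^{A\subset\latpk} y)=0$ follows without appealing to integrality of $C_0$. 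This is how the original argument in \cite{hkr} is usually phrased.

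One small wording fix: $(\latpk/A)^*$ is a finite abelian $p$-group, so you should say ``generating set'' rather than ``basis''. This does not affect the argument, since you only need a single nonzero $a$ in that subgroup.
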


We need one lemma, which we leave to the reader, that will help us compute with the transfer map $\Tr_{C_0}$.

\begin{lemma} \label{lemH}
Let $H \subset G$ be a subgroup and let $\al \colon \lat \to G$. If $g \in G$ has the property that $\im g^{-1}\al g \subset H$, then $gH \in (G/H)^{\im \al}$.
\end{lemma}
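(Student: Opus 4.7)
The plan is to unwind the definitions of $\im(g^{-1}\al g)$ and of the fixed-point set $(G/H)^{\im \al}$, and observe that the hypothesis and conclusion are essentially reformulations of the same condition on conjugation.

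First I would recall that $(G/H)^{\im \al} = \{xH \in G/H \mid a \cdot xH = xH \text{ for all } a \in \im \al\}$, and that $a \cdot xH = xH$ is equivalent to $x^{-1} a x \in H$. So the conclusion $gH \in (G/H)^{\im \al}$ is equivalent to the statement that $g^{-1} a g \in H$ for every $a \in \im \al$.

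Next I would note that $\im(g^{-1} \al g) = \{g^{-1} \al(v) g \mid v \in \lat\} = \{g^{-1} a g \mid a \in \im \al\}$, so the hypothesis $\im(g^{-1} \al g) \subset H$ says precisely that $g^{-1} a g \in H$ for every $a \in \im \al$. Combining this with the reformulation of the conclusion in the previous step finishes the proof. The only potential subtlety worth flagging is whether one should write the action of $G$ on $G/H$ on the left (so that $a \cdot xH = axH$) or on the right, but either convention gives the same equivalence $a \cdot xH = xH \iff x^{-1}ax \in H$, so no real obstacle arises. This is genuinely a one-line unraveling, so I would present it as such without further preamble.
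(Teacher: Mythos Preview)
Your argument is correct and is exactly the straightforward unraveling of definitions one would expect. The paper itself does not prove this lemma but leaves it to the reader, so there is nothing further to compare.
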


Applying Equation \eqref{transfereqn}, we may conclude from Lemma \ref{lemH} that if $\al \colon \lat \to G$ has the property that $\im g^{-1}\al g \subset H$ and if $1_{[g^{-1}\al g]} \in \Cl_n(H,C_0)$ is the class function with value $1$ on $[g^{-1}\al g]$ and $0$ elsewhere, then
\[
\Tr_{C_0}(1_{[g^{-1}\al g]})([\al]) > 0.
\] 
Also, if $[\beta] \neq [\al] \in \hom(\lat,G)_{/ \sim}$, then 
\[
\Tr_{C_0}(1_{[g^{-1}\al g]})([\beta]) = 0.
\]
Otherwise, $\beta$ would be conjugate to $g^{-1}\al g$, which is conjugate to $\al$.

Since the additive power operations are ring maps, they are easier to understand than the power operations. To study the additive power operations, we will understand the analogue of the ideal $I_{\tr} \subset E^0(B\Sigma_{p^k})$ in the ring of generalized class functions $\Cl_n(\Sigma_{p^k}, C_0)$. 

\begin{definition}
Let 
\[
\Sum_m(\qz) = \{\oplus_i H_i| H_i \subset \qz, \sum_{i} |H_i| = m\}
\]
be the set of formal sums of subgroups $H_i \subset \qz$ such that the sum of the orders is $m$.
\end{definition}

\begin{prop} \label{conjiso} 
There is a canonical bijection of sets
\[
\hom(\lat, \Sigma_m)_{/\sim} \cong \Sum_m(\qz).
\]
\end{prop}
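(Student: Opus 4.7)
The plan is to identify $\hom(\lat,\Sigma_m)$ with the set of continuous $\lat$-actions on the set $\{1,\ldots,m\}$, and then use the orbit decomposition together with Pontryagin duality.

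First I would unpack both sides. A continuous homomorphism $\al \colon \lat \to \Sigma_m$ is the same data as a continuous action of $\lat$ on the $m$-element set $\{1,\ldots,m\}$. Two such homomorphisms $\al,\beta$ are conjugate in $\Sigma_m$ if and only if the corresponding $\lat$-sets are isomorphic. So I want a canonical bijection between isomorphism classes of continuous $\lat$-sets of size $m$ and the set $\Sum_m(\qz)$.

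Next I would use the orbit decomposition. Any continuous $\lat$-set of size $m$ decomposes as a disjoint union of transitive $\lat$-sets $\coprod_i \lat/K_i$, where each $K_i \subset \lat$ is the stabilizer of a chosen point in the $i$th orbit. Continuity forces $K_i$ to be an open subgroup of finite index, and the orbit sizes $|\lat/K_i|$ sum to $m$. Conversely, any unordered collection of open finite-index subgroups $\{K_i\}$ with $\sum_i |\lat/K_i|=m$ arises this way, and the resulting $\lat$-set is determined up to isomorphism by the multiset $\{K_i\}$. Thus
\[
\hom(\lat,\Sigma_m)_{/\sim} \cong \bigl\{\textstyle\bigoplus_i K_i \bigm| K_i \subset \lat \text{ open of finite index},\ \sum_i|\lat/K_i|=m\bigr\}.
\]

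Finally I would apply Pontryagin duality to translate open finite-index subgroups of $\lat$ into finite subgroups of $\qz = \lat^*$. The assignment $K \mapsto K^{\perp} := \ker(\qz \to K^*)$, i.e.\ sending $K$ to the finite subgroup $(\lat/K)^* \subset \qz$, is a bijection between open finite-index subgroups of $\lat$ and finite subgroups of $\qz$, and it preserves cardinality in the sense that $|\lat/K| = |K^{\perp}|$. Applying this bijection orbit-by-orbit sends $\bigoplus_i K_i$ to $\bigoplus_i H_i \in \Sum_m(\qz)$ with $H_i = K_i^{\perp}$, and the condition $\sum_i |\lat/K_i| = m$ becomes $\sum_i |H_i| = m$. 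This gives the desired canonical bijection.

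The only subtle point, which I would verify carefully, is that conjugacy in $\Sigma_m$ matches isomorphism of $\lat$-sets (straightforward once one notes that conjugation by $\tau \in \Sigma_m$ corresponds to relabeling points) and that the multiset of stabilizers recovered from the orbit decomposition is well-defined independent of the choice of point in each orbit (true because $\lat$ is abelian, so stabilizers are constant on orbits). Everything else is routine bookkeeping with Pontryagin duality; the main potential obstacle is simply keeping the continuity hypothesis in play to ensure the stabilizers are open, but this is immediate from the fact that $\al$ factors through some $\latpk$.
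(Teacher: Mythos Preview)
Your proof is correct and follows essentially the same route as the paper: both identify $\hom(\lat,\Sigma_m)_{/\sim}$ with isomorphism classes of $\lat$-sets of size $m$, decompose into orbits, take the stabilizer $\lat_i$ in each orbit (well-defined since $\lat$ is abelian), and then pass to $H_i=(\lat/\lat_i)^*\subset\qz$ via Pontryagin duality. Your write-up is in fact a bit more explicit than the paper's, which leaves the verification that the map is a bijection to the reader.
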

\begin{proof}
Since $\Sigma_m = \Aut(\um)$, where $\um$ is a fixed $m$-element set, a map $\al \colon \lat \to \Sigma_m$ gives $\um$ the structure of an $\lat$-set. Maps $\al \colon \lat \to \Sigma_m$ and $\beta \colon \lat \to \Sigma_m$ are conjugate if and only if the corresponding $\lat$-sets are isomorphic. Thus $\hom(\lat, \Sigma_m)_{/\sim}$ is in bijective correspondence with the set of isomorphism classes of $\lat$-sets of size $m$.

Given $\al$, decompose $\um = \coprod_i \um_i$, where $\um_i$ is a transitive $\lat$-set and let $\lat_{i}$ be the stabilizer of any point in $\um_i$. Since $\lat$ is abelian, the stabilizer does not depend on the choice of point. We define $H_i = (\lat/\lat_i)^* \subset \lat^* = \qz$. This construction only depends on the conjugacy class of $\al$, so send $[\al]$ to $\oplus_i H_i$. We leave it to the reader to check that this map is a bijection.
\end{proof}

Let $\hom(\lat,\Sigma_m)^{\trans}$ be the set of transitive homomorphisms $\lat \to \Sigma_m$. Note that $\lat \to \Sigma_m$ is transitive if and only if the image is a transitive abelian subgroup of $\Sigma_m$. Since $\lat = \Z_{p}^n$, this can only occur when $m$ is a power of $p$.

\begin{cor} \label{subiso}
There is a canonical bijection
\[
\hom(\lat, \Sigma_{p^k})^{\trans}_{/\sim} \cong \Sub_{p^k}(\qz),
\]
where $\Sub_{p^k}(\qz)$ is the set of subgroups $H \subset \qz$ of order $p^k$.
\end{cor}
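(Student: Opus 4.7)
The plan is to deduce this directly from Proposition \ref{conjiso} by tracking what the bijection $\hom(\lat,\Sigma_m)_{/\sim} \cong \Sum_m(\qz)$ does in the transitive case. Specializing to $m = p^k$, the plan is to show that, under the bijection of Proposition \ref{conjiso}, the subset $\hom(\lat,\Sigma_{p^k})^{\trans}_{/\sim}$ of transitive conjugacy classes corresponds to the subset of $\Sum_{p^k}(\qz)$ consisting of formal sums with a single summand $H$ of order $p^k$, and this subset is tautologically $\Sub_{p^k}(\qz)$.

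More precisely, first I would recall from the proof of Proposition \ref{conjiso} that a homomorphism $\al \colon \lat \to \Sigma_{p^k}$ is equivalent to an $\lat$-set structure on $\underline{p^k}$, and the associated formal sum is obtained by decomposing $\underline{p^k} = \coprod_i \underline{p^k}_i$ into transitive $\lat$-sets and setting $H_i = (\lat/\lat_i)^* \subset \qz$, where $\lat_i$ is the stabilizer of a point of $\underline{p^k}_i$. By definition, $\al$ is transitive precisely when this decomposition has a single summand $\underline{p^k}_1 = \underline{p^k}$, so the associated formal sum is a single subgroup $H = (\lat/\lat_1)^* \subset \qz$ with $|H| = [\lat : \lat_1] = p^k$.

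Conversely, given any subgroup $H \subset \qz$ of order $p^k$, the formal sum consisting of this single subgroup lies in $\Sum_{p^k}(\qz)$ and, by the bijection of Proposition \ref{conjiso}, corresponds to a conjugacy class of homomorphisms $\al \colon \lat \to \Sigma_{p^k}$ whose associated $\lat$-set has exactly one orbit, hence is transitive. So the restriction of the bijection of Proposition \ref{conjiso} identifies $\hom(\lat,\Sigma_{p^k})^{\trans}_{/\sim}$ with $\Sub_{p^k}(\qz)$.

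There is no serious obstacle here; the only point that deserves care is the verification that the correspondence between orbits of an $\lat$-set and summands in $\Sum_{p^k}(\qz)$ really is a bijection on the nose, which was already handled inside the proof of Proposition \ref{conjiso}. The content of the corollary is then just that ``transitive'' on the topological side matches ``one summand'' on the arithmetic side, and that a single summand of order $p^k$ is the same data as an element of $\Sub_{p^k}(\qz)$.
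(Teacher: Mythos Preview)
Your proposal is correct and takes essentially the same approach as the paper: the paper's proof is the single sentence ``Follows immediately from the construction of the bijection of Proposition \ref{conjiso},'' and you have simply unpacked what that means by noting that transitive $\lat$-actions correspond to formal sums with a single summand, which is exactly an element of $\Sub_{p^k}(\qz)$.
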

\begin{proof}
Follows immediately from the construction of the bijection of Proposition \ref{conjiso}.
\end{proof}

The next lemma will be useful when we try to understand the relation between additive power operations and character theory. Recall the inclusion $\Delta_{i,j} \colon \Sigma_i \times \Sigma_j \hookrightarrow \Sigma_m$, where $i+j = m$ and $i,j>0$. Proposition \ref{conjiso} gives an isomorphism
\[
\hom(\lat, \Sigma_i \times \Sigma_j)_{/\sim} \cong \Sum_i(\qz) \times \Sum_j(\qz).
\]
We leave the proof of the next lemma to the reader.

\begin{lemma} \label{summap}
Assume $i,j>0$ and $i+j = m$, then the map
\[
\Sum_i(\qz, G) \times \Sum_j(\qz,G) \to \Sum_m(\qz,G)
\]
induced by $\Delta_{i,j}$ sends a pair of sums of subgroups
\[
(\Oplus{l} H_l, \Oplus{l} K_l) 
\]
to the sum
\[
\Oplus{l} H_l \oplus \Oplus{l} K_l.
\]
\end{lemma}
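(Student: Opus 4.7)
The plan is to unwind the bijection of Proposition \ref{conjiso} and observe that $\Delta_{i,j}$ corresponds under the $\lat$-set translation to disjoint union. Since the bijection of Proposition \ref{conjiso} was built by identifying $\hom(\lat, \Sigma_m)_{/\sim}$ with isomorphism classes of $\lat$-sets of size $m$ and then decomposing into transitive orbits, the lemma will reduce to two transparent facts: first, that the inclusion $\Delta_{i,j}$ realizes $\underline{m}$ as $\underline{i} \sqcup \underline{j}$ with the obvious action; second, that transitive orbit decompositions commute with disjoint unions.

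First, I would fix the identification $\Sigma_m = \Aut(\underline{m})$ with $\underline{m} = \underline{i} \sqcup \underline{j}$, so that $\Delta_{i,j}$ is the canonical inclusion $\Aut(\underline{i}) \times \Aut(\underline{j}) \hookrightarrow \Aut(\underline{i} \sqcup \underline{j})$ by permuting each summand separately. Given $(\al, \beta) \in \hom(\lat, \Sigma_i) \times \hom(\lat, \Sigma_j)$, the composite $\Delta_{i,j} \circ (\al, \beta)$ equips $\underline{m}$ with precisely the $\lat$-set structure of the disjoint union of the $\lat$-sets determined by $\al$ and $\beta$.

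Next, I would observe that transitive decomposition respects disjoint union: if $\underline{i} = \coprod_l \underline{i}_l$ and $\underline{j} = \coprod_l \underline{j}_l$ are the transitive decompositions, then $\underline{i} \sqcup \underline{j} = \coprod_l \underline{i}_l \sqcup \coprod_l \underline{j}_l$ is the transitive decomposition of the disjoint union, and the stabilizers of points in the summands are unchanged. Applying the construction $\underline{m}_l \mapsto (\lat/\lat_l)^* \subset \qz$ from the proof of Proposition \ref{conjiso}, one sees that if $\al \mapsto \oplus_l H_l$ and $\beta \mapsto \oplus_l K_l$, then $\Delta_{i,j} \circ (\al, \beta) \mapsto \oplus_l H_l \oplus \oplus_l K_l$, which is exactly the claimed formula.

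There is no substantive obstacle here: the entire statement is a naturality/unpacking assertion tracing through the bijection. The only care required is in matching conventions so that the identification $\Sigma_m = \Aut(\underline{i} \sqcup \underline{j})$ is compatible with the map $\Delta_{i,j}$ used throughout, which is a choice-of-basepoint issue that does not affect the isomorphism class of the resulting $\lat$-set and hence does not affect the associated formal sum.
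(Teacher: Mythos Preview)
Your proposal is correct. The paper does not supply its own proof of this lemma---it explicitly leaves it to the reader---so there is nothing to compare against beyond confirming that your argument is the intended unwinding of Proposition~\ref{conjiso}: identify $\Delta_{i,j}$ with disjoint union of $\lat$-sets, and observe that transitive orbit decompositions (and hence the associated subgroups $H_l = (\lat/\lat_l)^*$) are preserved under disjoint union.
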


\begin{cor}\label{padicsum}
Let $\sum_j a_j p^j$ be the $p$-adic expansion of $m$, then the inclusion
$\prod_{j} \Sigma_{p^j}^{\times a_j} \subseteq \Sigma_m$
induces a surjection
\[
\Prod{j} \Sum_{p^j}(\qz)^{\times a_j} \twoheadrightarrow \Sum_m(\qz).
\]
\end{cor}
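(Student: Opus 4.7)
The inclusion $\prod_j \Sigma_{p^j}^{\times a_j} \subseteq \Sigma_m$ factors as an iterated composite of inclusions of the form $\Delta_{i,j}$, so by iterating Lemma \ref{summap} the induced map on conjugacy classes $\prod_j \Sum_{p^j}(\qz)^{\times a_j} \to \Sum_m(\qz)$ sends a tuple $(T_{j,k})_{j,\, 1 \leq k \leq a_j}$ with $T_{j,k} \in \Sum_{p^j}(\qz)$ to the concatenation $\bigoplus_{j,k} T_{j,k}$. To prove surjectivity I need to show that every formal sum $\bigoplus_i H_i \in \Sum_m(\qz)$ can be partitioned into sub-sums, with exactly $a_j$ sub-sums of total order $p^j$ for each $j$. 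Since $\qz \cong \QZ{n}$ is $p$-primary, every finite subgroup $H_i \subset \qz$ has order $p^{e_i}$ for some $e_i \geq 0$, so the claim reduces to a purely combinatorial statement about multisets of $p$-powers.

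The combinatorial claim is: given any finite multiset $\{p^{e_1}, \ldots, p^{e_r}\}$ summing to $m = \sum_j a_j p^j$, the indices can be partitioned into $\sum_j a_j$ parts so that exactly $a_j$ parts have element sum $p^j$. I would prove this by strong induction on $r$. Let $e = \min_i e_i$ and let $c$ be its multiplicity. Since $p^e$ divides $m$, we have $a_j = 0$ for $j < e$; reducing $m = \sum_i p^{e_i}$ modulo $p^{e+1}$ kills every term with $e_i > e$ and gives $m \equiv c p^e \pmod{p^{e+1}}$, while the $p$-adic expansion gives $m \equiv a_e p^e \pmod{p^{e+1}}$, so $c \equiv a_e \pmod p$. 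Hence $c \geq a_e$ with $p \mid (c - a_e)$.

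I would then split into two cases. If $a_e \geq 1$, pull out one $H_{i_0}$ with $|H_{i_0}| = p^e$ as a singleton part (matching one of the $a_e$ required copies of $p^e$) and apply the induction hypothesis to the remaining multiset, whose sum has $p$-adic expansion obtained by decrementing $a_e$. If $a_e = 0$, then $c \geq p$, so I bundle $p$ of the exponent-$e$ subgroups into a single composite object of value $p^{e+1}$; the resulting multiset has size $r - p + 1 < r$ and the same total $m$, so the induction hypothesis produces a partition that lifts back to one of the original multiset by unbundling. The main obstacle is the modular identity $c \equiv a_e \pmod p$, which is what makes the $a_e = 0$ case go through: it guarantees that enough minimum-exponent subgroups are present to form a bundle of size $p$. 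Once that is in hand, the induction runs cleanly and surjectivity follows.
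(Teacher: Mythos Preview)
Your argument is correct. The paper's own proof is a single line, ``Follows from Lemma \ref{summap},'' which identifies the induced map as concatenation of formal sums and tacitly treats the surjectivity as evident; you have supplied the combinatorial content that the paper leaves to the reader. Your induction on the number $r$ of summands, pivoting on the congruence $c \equiv a_e \pmod p$ for the multiplicity $c$ of the minimal exponent $e$, is a clean way to organize this: the case $a_e \geq 1$ peels off a singleton block, and the case $a_e = 0$ uses $c \geq p$ to coalesce $p$ minimal terms into a single term of the next exponent, strictly decreasing $r$ in both cases. The unbundling step is sound because replacing one $p^{e+1}$-valued placeholder by its $p$ constituent $p^e$-terms does not change the block's total order. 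One could phrase the same idea iteratively---at each level $e$ set aside $a_e$ singletons and group the remaining exponent-$e$ terms in $p$'s---but your recursive formulation is equivalent and arguably cleaner.
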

\begin{proof}
Follows from Lemma \ref{summap}.
\end{proof}

Finally, we will use this lemma to give a description $\Cl_n(\Sigma_m, C_0)/I_{\tr}$, where $I_{\tr} \subset \Cl_n(\Sigma_m, C_0)$ is the image of the sum of the transfer maps 
\[
\Oplus{i,j}\Tr_{C_0} \colon \Cl_n(\Sigma_i \times \Sigma_j, C_0) \to \Cl_n(\Sigma_m, C_0)
\]
along the inclusions $\Delta_{i,j}$ for all $i,j>1$ with $i+j = m$.

\begin{prop}
There is a canonical isomorphism
\[
\Cl_n(\Sigma_m,C_0)/I_{\tr} \cong \Prod{\Sub_m(\qz)} C_0.
\]
\end{prop}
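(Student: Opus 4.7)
The plan is to exploit Proposition \ref{conjiso} to identify
\[
\Cl_n(\Sigma_m,C_0) \;\cong\; \Prod{\Sum_m(\qz)} C_0,
\]
and to recognize $\Sub_m(\qz) \subset \Sum_m(\qz)$ as the set of formal sums consisting of a single summand, i.e.\ the image of $\hom(\lat,\Sigma_m)^{\trans}_{/\sim}$ under the bijection (cf.\ Corollary \ref{subiso}). Restriction along this inclusion defines a surjective ring map
\[
\rho \colon \Cl_n(\Sigma_m,C_0) \longrightarrow \Prod{\Sub_m(\qz)} C_0,
\]
and the claim reduces to showing $\ker(\rho) = I_{\tr}$.

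For the inclusion $I_{\tr} \subseteq \ker(\rho)$, I would start from the transfer formula \eqref{transfereqn}. Fix a transitive $[\al]\in\hom(\lat,\Sigma_m)_{/\sim}$, corresponding to an element of $\Sub_m(\qz)$, and let $H = \Delta_{i,j}(\Sigma_i\times\Sigma_j) \subset \Sigma_m$ with $i,j>0$ and $i+j=m$. A coset $gH \in (\Sigma_m/H)^{\im \al}$ requires $g^{-1}\al g$ to take values in $H$, and since $H$ preserves the partition $\um = \um_i \sqcup \um_j$, no transitive $\lat$-action on $\um$ can be conjugate into $H$. Hence $(\Sigma_m/H)^{\im\al}=\emptyset$, so every transfer of the form $\Tr_{C_0}(f)$ vanishes on $[\al]$, proving that the image of the transfer map is annihilated by $\rho$.

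For the reverse inclusion $\ker(\rho)\subseteq I_{\tr}$, I would argue on indicator functions. Given a non-transitive $[\al]$, the corresponding $\lat$-set $\um$ decomposes nontrivially, so $\al$ is conjugate to some $\al'\colon \lat \to \Sigma_m$ whose image factors through $H=\Sigma_i\times\Sigma_j$ for appropriate $i,j>0$ with $i+j=m$. By Lemma \ref{lemH} and the discussion following it, the transfer
\[
\Tr_{C_0}\bigl(1_{[\al']}\bigr) \in \Cl_n(\Sigma_m,C_0)
\]
takes a strictly positive integer value at $[\al]$ and vanishes on every other conjugacy class in $\hom(\lat,\Sigma_m)_{/\sim}$. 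Since $C_0$ is a $\Q$-algebra, that integer is a unit, so a suitable $C_0$-multiple of this transfer recovers $1_{[\al]}$, and hence $1_{[\al]}\in I_{\tr}$. Any element of $\ker(\rho)$ is a $C_0$-linear combination of indicators $1_{[\al]}$ ranging over non-transitive classes, so it lies in $I_{\tr}$.

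The main technical point is the computation of $\Tr_{C_0}(1_{[\al']})$ — in particular, ensuring that it is supported exactly on $[\al]$ and that its value there is a positive integer, not merely nonzero; this is precisely the content of Lemma \ref{lemH} and the paragraph that follows it, together with the fact that $\sim_H$-conjugacy forces $\sim_{\Sigma_m}$-conjugacy. Everything else is routine bookkeeping through the identifications of Proposition \ref{conjiso}.
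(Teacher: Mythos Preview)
Your proposal is correct and follows essentially the same approach as the paper's proof: identify $\Cl_n(\Sigma_m,C_0)$ with $C_0$-valued functions on $\Sum_m(\qz)$ via Proposition~\ref{conjiso}, then show that $I_{\tr}$ is exactly the ideal of functions supported on the non-transitive classes, using the emptiness of $(\Sigma_m/(\Sigma_i\times\Sigma_j))^{\im\al}$ for transitive $\al$ in one direction and Lemma~\ref{lemH} together with the invertibility of positive integers in $C_0$ in the other.
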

\begin{proof}
Proposition \ref{conjiso} implies that we have a canonical isomorphism
\[
\Cl_n(\Sigma_m,C_0) \cong \Prod{\Sum_m(\qz)} C_0.
\]
We will show that $I_{\tr} \subset \Cl_n(\Sigma_m,C_0)$ consists of the functions supported on the set of sums of subgroups with more than one summand.

First we will show that functions with support not contained in the set of sums with more than one summand cannot be hit by the transfer map. The definition of the transfer $\Tr_{C_0}$ is as a sum over elements that conjugate a conjugacy class in $[\lat \to G]$ into the subgroup $H$. In our case $G = \Sigma_{m}$ and $H = \Sigma_i \times \Sigma_j$. If $\lat \to \Sigma_m$ is transitive then every conjugate of the map is also transitive and so no conjugate can land inside $\Sigma_i \times \Sigma_j$.

Now let $\al \colon \lat \to \Sigma_m$ be a non-transitive map. Thus $[\al]$ corresponds to a sum of subgroups with more than one summand. Therefore there exists $i,j > 0$ with $i+j = m$ such that $\al$ is conjugate to a map $[\beta]$ that lands in $\Sigma_i \times \Sigma_j$. The discussion after Lemma \ref{lemH} implies that $\Tr_{C_0}(1_{[\beta]})$ is concentrated on $[\al]$ and there it is a non-zero natural number (which is invertible in $C_0$). Thus $I_{\tr}$ contains the factor of $\Cl_n(\Sigma_m,C_0)$ corresponding to $[\al]$.
\end{proof}


By Theorem \ref{thmd}, there is a commutative diagram
\[
\xymatrix{\Oplus{i,j}E^0(B\Sigma_i \times B\Sigma_j)) \ar[r]^-{\oplus \Tr_E} \ar[d] & E^0(B\Sigma_m) \ar[d] \\ \Oplus{i,j}\Cl_n(\Sigma_i \times \Sigma_j, C_0) \ar[r]^-{\oplus \Tr_{C_0}} & \Cl_n(\Sigma_m, C_0),}
\]
where the sum is over all $i,j>0$ such that $i+j = m$,
Thus, we have an induced map of $E^0$-algebras
\[
\chi \colon E^0(B\Sigma_m)/I_{\tr} \to \Cl_n(\Sigma_m,C_0)/I_{\tr} \cong \Prod{\Sub_{p^k}(\qz)} C_0
\]
that we will continue to call $\chi$. Theorem \ref{thmc} above implies that this map gives rise to an isomorphism
\[
C_0 \otimes_{E^0} E^0(B\Sigma_m)/I_{\tr} \lra{\cong} \Cl_n(\Sigma_m,C_0)/I_{\tr}.
\]
Assume that $H \in \Sub_{p^k}(\qz)$ corresponds to the transitive conjugacy class $[\beta \colon \lat \to \Sigma_{p^k}]$. We define $\chi_H$ to be the composite
\begin{equation} \label{chih}
\chi_H \colon E^0(B\Sigma_m)/I_{\tr} \to \Cl_n(\Sigma_m,C_0)/I_{\tr} \lra{\pi_H} C_0.
\end{equation}
Since the quotient map $\Cl_n(\Sigma_m,C_0) \to \Cl_n(\Sigma_m,C_0)/I_{\tr}$ is just a projection, the maps $\chi_H$ and $\chi_{[\beta]}$ are related by the following commutative diagram:
\[
\xymatrix{E^0(B\Sigma_{p^k}) \ar[r] \ar[dr]_{\chi_{[\beta]}} & E^0(B\Sigma_m)/I_{\tr} \ar[d]^{\chi_H} \\ & C_0.}
\]

\section{A theorem of Ando-Hopkins-Strickland}

An important ingredient in understanding the relationship between power operations and character theory is a result of Ando, Hopkins, and Strickland that gives an algebro-geometric interpretation of a special case of the power operation in terms of Lubin-Tate theory. Their result indicates that there is a connection between power operations and the $\Isog(\qz)$-action on $C_0$. 

One starting point for making the connection between algebraic geometry and power operations is Strickland's theorem \cite{etheorysym} which gives an algebro-geometric interpretation of $E^0(B\Sigma_{p^k})/I_{\tr}$. 

\begin{thm} \cite[Theorem 9.2]{etheorysym} \label{strickland}
There is a canonical isomorphism of $E^0$-algebras
\[
E^0(B\Sigma_{p^k})/I_{\tr} \cong \Sect_{\Sub_{p^k}(\Gu)}.
\]
\end{thm}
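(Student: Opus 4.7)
The plan is to exhibit the isomorphism by constructing a canonical subgroup scheme of order $p^k$ of $\Gu$ over $E^0(B\Sigma_{p^k})/I_{\tr}$, invoking the universal property of Corollary \ref{ltsub} to produce an $E^0$-algebra map
\[
\Sect_{\Sub_{p^k}(\Gu)} \to E^0(B\Sigma_{p^k})/I_{\tr},
\]
and then verifying this map is an isomorphism by comparing the two sides as finitely generated free $E^0$-modules.

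The first and most delicate step is producing the subgroup scheme. Applying the power operation $P_{p^k}$ from Section \ref{etheory} to the generator $x \in E^0(BS^1) \cong E^0 \powser{x}$ supplied by Proposition \ref{S1} yields, via the Kunneth isomorphism, a power series
\[
P_{p^k}(x) \in E^0(B\Sigma_{p^k}) \powser{x}.
\]
Pulling back along $\iota \colon * \to B\Sigma_{p^k}$ recovers the $p^k$-th power map $x \mapsto x^{p^k}$, so after Weierstrass preparation we may write $P_{p^k}(x) = u(x) g(x)$ with $u(x)$ a unit and $g(x)$ a distinguished polynomial of degree $p^k$. The vanishing of $g(x)$ cuts out a closed subscheme $D_k \subset \Gu$ of degree $p^k$ over $\Spf(E^0(B\Sigma_{p^k}))$. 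The multiplicativity of $P_{p^k}$ together with the commutative diagram \eqref{powerdiagram} expresses the failure of $g(x_1 +_{\Gu} x_2)$ to lie in the ideal $(g(x_1), g(x_2))$ in terms of the images of the transfers along the maps $\Delta_{i,j} \colon \Sigma_i \times \Sigma_j \hookrightarrow \Sigma_{p^k}$ for $i+j=p^k$ with $i,j>0$. Reducing modulo $I_{\tr}$ therefore kills precisely this obstruction and promotes $D_k$ to a subgroup scheme of $\Gu$ of order $p^k$.

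To finish, I would show the resulting map is an isomorphism via Nakayama. Both sides are finitely generated free $E^0$-modules: the source by Corollary \ref{ltsub}, the target because $E^0(B\Sigma_{p^k})$ is free by \cite[Theorem 3.2]{etheorysym} and one can check using the additive splittings provided by the wreath product inclusions that $I_{\tr}$ is a direct summand. Reducing both rings modulo the maximal ideal $\m \subset E^0$, the universal deformation specializes to $\F$ over $\ka$, and the structure of $\Sub_{p^k}(\F)$ over $\ka$ can be described explicitly using the proof of Corollary \ref{ltsub}; on the other side, the explicit description of $E^0(B\Sigma_{p^k})/I_{\tr}$ modulo $\m$ can be extracted from the computations of \cite{etheorysym}. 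Matching these two descriptions on the canonical generator coming from $g(x)$ gives the isomorphism after Nakayama.

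The main obstacle is the second step: pinning down that $I_{\tr}$ is \emph{exactly} the ideal cutting out the subgroup-scheme condition on $D_k$. This requires a careful accounting of the additive identities satisfied by $P_{p^k}$ from \cite[Chapters I and VIII]{bmms}, in particular the interaction between the external product structure on power operations and the wreath-product transfers, together with an inductive argument on $k$ that keeps track of how the divisor $D_k$ interacts with the forgetful-type maps $\Sub_{p^k}(\Gu) \to \Sub_{p^{k-1}}(\Gu/H)$ coming from quotients by subgroups.
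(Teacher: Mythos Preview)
Your approach differs genuinely from the paper's. The paper never touches power operations here: it uses the identification $\Spf E^0(BU(p^k)) \cong \Div_{p^k}(\Gu)$ together with the permutation representation $s_{p^k} \colon \Sigma_{p^k} \to U(p^k)$ to pull the universal divisor back to $E^0(B\Sigma_{p^k})$, and then detects the subgroup condition rationally by embedding both $\Sect_{\Sub_{p^k}(\Gu)}$ and $E^0(B\Sigma_{p^k})/I_{\tr}$ into $\prod_{\Sub_{p^k}(\qz)} C_0$ via character theory. A diagram chase yields an injection $\Sect_{\Sub_{p^k}(\Gu)} \hookrightarrow E^0(B\Sigma_{p^k})/I_{\tr}$, with surjectivity deferred to the literature. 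Your route instead manufactures the candidate subgroup scheme directly from $P_{p^k}(x)$ and invokes the universal property of $\Sub_{p^k}(\Gu)$; this is closer in spirit to how Ando--Hopkins--Strickland later \emph{use} Strickland's theorem, and it imports the Goerss--Hopkins--Miller $E_\infty$-structure as a logical input that the paper's argument avoids.

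That said, your justification of the central step---that $D_k$ becomes a subgroup scheme modulo $I_{\tr}$---is not right as written. Multiplicativity of $P_{p^k}$ and Diagram~\eqref{powerdiagram} are not what force $g(x_1 +_{\Gu} x_2) \in (g(x_1), g(x_2))$; Diagram~\eqref{powerdiagram} concerns restriction of $P_m$ along $\Sigma_i \times \Sigma_j \subset \Sigma_m$ and says nothing about the formal group addition. The correct argument is: (i) by naturality of $P_{p^k}$ along the multiplication $\mu \colon S^1 \times S^1 \to S^1$, one has $g(x_1 +_{\Gu} x_2) = P_{p^k}(x_1 +_{\Gu} x_2)$ in $E^0(B\Sigma_{p^k})\powser{x_1,x_2}$; and (ii) since $P_{p^k}/I_{\tr}$ is a \emph{ring} homomorphism (this is exactly the role of $I_{\tr}$, as the paper records just before Equation~\eqref{additivetotal}), applying it to the power series $x_1 +_{\Gu} x_2$ yields a power series in $g(x_1), g(x_2)$ with zero constant term, hence lying in $(g(x_1), g(x_2))$. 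Your Nakayama endgame is reasonable in outline, but the freeness of $E^0(B\Sigma_{p^k})/I_{\tr}$ over $E^0$ (equivalently, that $I_{\tr}$ is a summand) is itself one of the nontrivial inputs from \cite{etheorysym}, not something checked by inspection.
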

\begin{proof}[idea of the proof]
It is worth understanding the origin of this isomorphism. Let 
\[
\Div_{p^k}(\Gu) \colon \Comprings_{E^0/} \to \text{Set}
\]
be the functor that assigns to $j \colon E^0 \to R$ the set of effective divisors on $j^*\Gu$. These divisors are just subschemes of $j^*\Gu$ of the form
\[
\Spf(R\powser{x}/(a_0 +a_{1}x + \ldots + a_{p^k-1}x^{p^k-1}+x^{p^k})),
\]
where $a_i$ is an element of the maximal ideal of $R$. Since a subgroup scheme has an underlying divisor, $\Sub_{p^k}(\Gu)$ is a closed subscheme of $\Div_{p^k}(\Gu)$. Proposition 8.31 in \cite{fsfg} gives a canonical isomorphism of formal schemes
\[
\Spf E^0(BU(p^k)) \cong \Div_{p^k}(\Gu),
\] 
where $U(p^k)$ is the unitary group.


It is not hard to check that the standard representation $s_{p^k} \colon \Sigma_{p^k} \to U(p^k)$ fits into a commutative diagram
\[
\xymatrix{\Sect_{\Div_{p^k}(\Gu)} \ar[r]^-{\cong} \ar@{->>}[d]   &  E^0(BU(p^k)) \ar[d]^{s_{p^k}^*}\\  \Sect_{\Sub_{p^k}(\Gu)} \ar@{^{(}->}[d] & E^0(B\Sigma_{p^k})/I_{\tr}  \ar@{^{(}->}[d] \\ \Prod{\Sub_{p^k}(\qz)} C_0 \ar[r]^-{\cong}  &  \Cl_n(\Sigma_{p^k},C_0)/I_{\tr}.}
\]
The map 
\[
\Sect_{\Sub_{p^k}(\Gu)} \to \Prod{\Sub_{p^k}(\qz)} C_0 
\]
is just the map to the base change 
\[
\Sect_{\Sub_{p^k}(\Gu)} \to C_0 \otimes_{E^0} \Sect_{\Sub_{p^k}(\Gu)} \cong \Sect_{\Sub_{p^k}(\qz)} \cong \Prod{\Sub_{p^k}(\qz)} C_0.
\]
and it is an injection since $\Sect_{\Sub_{p^k}(\Gu)}$ is a finitely generated free $E^0$-module (Corollary \ref{ltsub}).

The map $E^0(B\Sigma_{p^k})/I_{\tr} \to \Cl_n(\Sigma_{p^k},C_0)/I_{\tr}$ is also induced by base change and it is an injection because Theorem 8.6 of \cite{etheorysym} states that $E^0(B\Sigma_{p^k})/I_{\tr}$ is a free $E^0$-module.

A diagram chase in the commutative diagram above gives us an injective map
\[
\Sect_{\Sub_{p^k}(\Gu)} \hookrightarrow E^0(B\Sigma_{p^k})/I_{\tr}.
\]
Proving that this map is an isomorphism requires some work. We refer the reader to \cite{subgroups} or \cite{genstrickland} to see two different ways that this can be accomplished.
\end{proof}


The isomorphism in Theorem \ref{strickland} is built in such a way that the following diagram commutes
\begin{equation} \label{subgroups}
\xymatrix{E^0(B\Sigma_{p^k})/I_{\tr} \ar[r]^-{\cong} \ar[d]  & \Sect_{\Sub_{p^k}(\Gu)} \ar[d] \\ \Cl_n(\Sigma_{p^k},C_0)/I_{\tr} \ar[r]^-{\cong} \ar[d]_-{\pi_H} & \Prod{\Sub_{p^k}(\qz)} C_0 \ar[d]^-{\pi_H} \\ C_0 \ar[r]^{=} & C_0.}
\end{equation}
The composite of the left vertical arrows is $\chi_H$ from Equation \ref{chih}. The composite of the right vertical arrows factors through $\Sect_{\Level(\qz,\Gu)}$ and the resulting map $\Sect_{\Sub_{p^k}(\Gu)} \to \Sect_{\Level(\qz,\Gu)}$ depending on $H \subset \qz$ is the map of moduli problems sends
\[
(l,i,\tau) \mapsto (l(H) \subset \G, i, \tau).
\]

Recall that the additive power operation applied to $BA$ for $A$ a finite abelian group is the ring map
\[
P_{p^k}/I_{\tr} \colon E^0(BA) \to E^0(BA) \otimes_{E^0} E^0(B\Sigma_{p^k})/I_{\tr}.
\]
The domain and codomain both admit interpretations in terms of moduli problems over Lubin-Tate space. Applying $\Spf(-)$ gives a map
\[
\Sub_{p^k}(\Gu) \times_{\LT} \Hom(A^*,\Gu) \to \Hom(A^*,\Gu).
\]
There is an obvious guess for what this map might do when applied to a complete local ring $R$. An object in the groupoid
\[
\Sub_{p^k}(\Gu)(R) \times_{\LT(R)} \Hom(A^*,\Gu)(R)
\] 
is a deformation of $\F$ to $R$, $(\G, i, \tau)$, equipped with a subgroup scheme of order $p^k$, $H \subset \G$ and a homomorphism $A^* \to \G$. We need to produce a deformation of $\F$ equipped with a map from $A^*$. This can be accomplished by taking the composite
\[
A^* \to \G \to \G/H
\]
and recalling that $\G/H$ is a deformation in a canonical way. This is the content of Proposition 3.21 of \cite{ahs}:
\begin{thm} \cite[Proposition 3.21]{ahs} \label{ahs}
The power operation $P_{p^k}/I_{\tr}$ is the ring of functions on the map of moduli problems
\[
\Sub_{p^k}(\Gu) \times_{\LT} \Hom(A^*,\Gu) \to \Hom(A^*,\Gu)
\]
that, when applied to a complete local ring $(R,m)$, sends a deformation equipped with subgroup of order $p^k$ and homomorphism $A^* \to \G$
\[
(H \subset \G, A^* \to \G, i, \tau)
\]
to the tuple
\[
(A^* \to \G \to \G/H, i \circ \sigma^k, \tau/H).
\]
\end{thm}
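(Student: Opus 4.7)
The plan is to check the asserted formula in three stages: (i) translate it into an equality of morphisms of corepresentable moduli problems, (ii) reduce to the case $A^* = \Z$ where $P_{p^k}$ can be computed explicitly on the canonical coordinate of $\Gu$, and (iii) identify the resulting norm expression with the quotient map $q \colon \Gu \to \Gu/H_{\mathrm{univ}}$. By Proposition \ref{eabhom} and Theorem \ref{strickland}, applying $\Spf(-)$ to $P_{p^k}/I_{\tr}$ yields a morphism
\[
\mu \colon \Sub_{p^k}(\Gu) \times_{\LT} \Hom(A^*, \Gu) \to \Hom(A^*, \Gu),
\]
and I write $\nu$ for the claimed target morphism. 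Both $\mu$ and $\nu$ are natural in $A$: for $\mu$ by functoriality of $P_{p^k}$ together with the Kunneth isomorphism of Proposition \ref{abcomputation}, and for $\nu$ by post-composition with the quotient homomorphism. Since every finite abelian $p$-group $A^* = \prod_i C_{p^{k_i}}$ gives $\Hom(A^*,\Gu) = \prod_i \Gu[p^{k_i}]$ as a closed subscheme of $\Gu^r$, it suffices to verify $\mu = \nu$ on a single universal coordinate, i.e., in the case $A^* = \Z$, where $\Hom(\Z,\Gu) = \Gu$ and the universal pair is $(H \subset \Gu,\, x)$ over $\Sect_{\Sub_{p^k}(\Gu)}$ with $x$ the coordinate.

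The core computation in this reduced case is the identification
\[
P_{p^k}(x) \bmod I_{\tr} \;=\; \Prod{h \in H_{\mathrm{univ}}} (x +_{\Gu} h) \;\in\; E^0\powser{x} \hotimes_{E^0} \Sect_{\Sub_{p^k}(\Gu)}.
\]
This is obtained by applying the total power operation $\Po_{p^k}$ to the line-bundle class $x \in E^0(BS^1)$ and restricting along the standard permutation representation $s_{p^k} \colon \Sigma_{p^k} \to U(p^k)$ from the sketch of Theorem \ref{strickland}: the total power of $x$ lies in $E^0((BS^1)^{p^k} \times_{\Sigma_{p^k}} E\Sigma_{p^k})$ and restricts along the diagonal to the product of $p^k$ external copies of $x$, while under Strickland's isomorphism $s_{p^k}^*$ identifies the coordinates on $\Sect_{\Div_{p^k}(\Gu)}$ with the universal subgroup $H_{\mathrm{univ}} \subset \Gu$. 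By the universal property of $\Gu/H_{\mathrm{univ}}$ (equivalently, by the norm formula for finite flat isogenies of formal groups), this norm product equals $q^*(x')$ for $x'$ a coordinate on $\Gu/H_{\mathrm{univ}}$, so on the universal point $\mu$ sends $(H \subset \Gu, x)$ to $q(x) \in (\Gu/H)(R)$. Equipping $\Gu/H_{\mathrm{univ}}$ with the canonical deformation structure $(i \circ \sigma^k, \tau/H)$ constructed around diagram \eqref{psidiagram} then matches $\nu$ exactly.

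The main obstacle is the identification $P_{p^k}(x) \bmod I_{\tr} = \Prod{h \in H_{\mathrm{univ}}} (x +_{\Gu} h)$ in the second stage: the formal definition of $\Po_{p^k}$ in terms of extended powers does not produce this polynomial directly, and the identification relies on complex-orientable input, namely that the $p^k$-fold extended power on the Thom class of the tautological line bundle restricts along $s_{p^k}$ to the formal product over the roots of the universal effective divisor of degree $p^k$. Granting this (either via a direct Atiyah-Hirzebruch argument on $BU(p^k)$, or via the complex-orientable power-operation machinery of Ando, which is the original approach of Ando-Hopkins-Strickland), the remainder of the argument, including the reduction from general abelian $A$ back up from the $A^* = \Z$ case, is bookkeeping using the naturality of both $\mu$ and $\nu$ already established.
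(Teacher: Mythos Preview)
The paper does not supply its own proof of this statement; Theorem~\ref{ahs} is quoted directly as \cite[Proposition~3.21]{ahs} and then used as a black box for Proposition~\ref{AHSrestricted}. There is no argument in the paper to compare your proposal against.

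That said, your sketch is the correct outline of the Ando--Hopkins--Strickland argument. The reduction to $A^*=\Z$ via naturality in $A$ together with the fact that $P_{p^k}/I_{\tr}$ is a ring map is sound, and you correctly isolate the one substantive step as the norm identity $P_{p^k}(x)\bmod I_{\tr}=\prod_{h\in H_{\mathrm{univ}}}(x+_{\Gu}h)$, which is precisely what the complex-orientable power-operation machinery (Ando's norm coherence) supplies. One small correction: the canonical deformation structure $(i\circ\sigma^k,\tau/H)$ on $\G/H$ is constructed in the paragraph following diagram~\eqref{frob}, not around diagram~\eqref{psidiagram}; the latter concerns isogenies of the constant group $\qz$ and plays no role here.
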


This theorem can be used to understand the relationship between $P_{p^k}/I_{\tr}$ applied to finite abelian groups and the character maps. Recall the construction of $\chi_{[\al]} \colon E^0(BG) \to C_0$ in Equation \eqref{charactermap}.

\begin{prop} \label{AHSrestricted}
Let $H \subset \qz$ be a subgroup of order $p^k$, let $\al \colon \lat \to A$ be a group homomorphism, and let $\phi_H \colon \qz \to \qz$ be an isogeny such that $\ker(\phi_H) = H$. There is a commutative diagram
\[
\xymatrix{E^0(BA) \ar[r]^-{P_{p^k}/I_{\tr}} \ar[d]_{\chi_{[\al \phi_{H}^{*}]}} & E^0(BA) \otimes_{E^0} E^0(B\Sigma_{p^k})/I_{\tr} \ar[d]^{\chi_{[\al]} \otimes \chi_H} \\ C_0 \ar[r]^-{\phi_H} & C_0.}
\]
\end{prop}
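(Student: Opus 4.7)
The plan is to apply $\Spf$ to turn the diagram into one of (fine) moduli problems and chase a single test object $(l \colon \qz \to \G, i, \tau) \in \Level(\qz, \Gu)(R)$ through both routes, using the moduli-theoretic descriptions we have of each map.

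First I would translate the four arrows into the language of moduli. Example \ref{characterab} describes the character map $\chi_{[\al]}$ on $E^0(BA)$ as the map $(l, i, \tau) \mapsto (A^* \xrightarrow{\al^*} \qz \xrightarrow{l} \G, i, \tau)$ into $\Hom(A^*, \Gu)$; the same description applies to $\chi_{[\al \phi_H^*]}$ once we observe that, under the canonical identification $\lat^{**} \cong \lat$, we have $(\al \phi_H^*)^* = \phi_H \al^*$. Diagram \eqref{subgroups} identifies $\chi_H$ with $(l, i, \tau) \mapsto (l(H) \subset \G, i, \tau)$ into $\Sub_{p^k}(\Gu)$. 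Theorem \ref{ahs} realizes $P_{p^k}/I_{\tr}$ geometrically as the ``quotient'' map
\[
(K \subset \G,\, f \colon A^* \to \G,\, i,\, \tau) \;\longmapsto\; (A^* \xrightarrow{f} \G \xrightarrow{q} \G/K,\, i \circ \sigma^k,\, \tau/K).
\]
Equation \eqref{isogaction} tells us the $\phi_H$-action on $\Level(\qz, \Gu)$ sends $(l, i, \tau)$ to $(l/H \circ \psi_H^{-1},\, i \circ \sigma^k,\, \tau/H)$, which after applying functions is the $\phi_H$ on $C_0$ appearing at the bottom of the square.

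With all four descriptions in hand, I chase the test point both ways. Going right then down produces
\[
(A^* \xrightarrow{\al^*} \qz \xrightarrow{l} \G \xrightarrow{q} \G/l(H),\, i \circ \sigma^k,\, \tau/l(H)),
\]
while going down then right produces
\[
(A^* \xrightarrow{\phi_H \al^*} \qz \xrightarrow{l/H \circ \psi_H^{-1}} \G/H,\, i \circ \sigma^k,\, \tau/H).
\]
The key algebraic step is to simplify $l/H \circ \psi_H^{-1} \circ \phi_H$ using (i) the defining commutative square \eqref{psidiagram}, which gives $\psi_H^{-1} \phi_H = q_H$, and (ii) the universal property of the quotient $l/H$, which gives $l/H \circ q_H = q \circ l$. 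Together these yield $l/H \circ \psi_H^{-1} \circ \phi_H = q \circ l$, and identifying $l(H)$ with $H$ (the notational abuse of Section \ref{LT}) exhibits the two deformations as equal.

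The main obstacle I anticipate is purely notational: three different incarnations of $H$ (as a subgroup of $\qz$, as its image under $l$ inside $\G$, and as the kernel of $\phi_H$) together with repeated appeals to Pontryagin duality have to be tracked carefully. All the substantive ingredients --- the geometric description of additive power operations (Theorem \ref{ahs}), the moduli-theoretic formula for the character map on abelian groups (Example \ref{characterab}), the formula \eqref{isogaction} for the $\Isog(\qz)$-action, and the square \eqref{psidiagram} --- are already in place, so once the bookkeeping is fixed the verification is essentially immediate.
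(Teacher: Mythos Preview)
Your proposal is correct and follows essentially the same route as the paper: both reduce to $\Sect_{\Level(\qz,\Gu)}$, translate each arrow into its moduli-theoretic description via Example~\ref{characterab}, Diagram~\eqref{subgroups}, Theorem~\ref{ahs}, and Equation~\eqref{isogaction}, and then verify equality of the two resulting deformations using $\psi_H^{-1}\phi_H = q_H$ from Diagram~\eqref{psidiagram} together with $l/H \circ q_H = q \circ l$. Your explicit handling of $(\al\phi_H^*)^* = \phi_H\al^*$ via double duality is in fact cleaner than the paper's exposition, which contains a typo at this step.
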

\begin{proof}
Since the image of the character map lands in $\Sect_{\Level(\qz,\Gu)}$, it suffices to prove that
\[
\xymatrix{E^0(BA) \ar[r]^-{P_{p^k}/I_{\tr}} \ar[d]_{\chi_{[\al \phi_{H}^{*}]}} & E^0(BA) \otimes_{E^0} E^0(B\Sigma_{p^k})/I_{\tr} \ar[d]^{\chi_{[\al]} \otimes \chi_H} \\ \Sect_{\Level(\qz,\Gu)} \ar[r]^-{\phi_H} & \Sect_{\Level(\qz,\Gu)}.}
\]
commutes. Recall the definition of $\psi_H$ from Diagram \eqref{psidiagram}. Now applying Example \ref{characterab} to the maps $\chi_{[\al]}$ and $\chi_{[\al \phi_{H}^*]}$, applying the discussion around Diagram \eqref{subgroups} to $\chi_H$, and applying Theorem \ref{ahs} to the top arrow, we just need to check that a certain diagram of moduli problems commutes. Going around the bottom direction gives
\[
(l, i, \tau) \mapsto ((l/H)\psi_{H}^{-1},i \sigma^{k},\tau/H) \mapsto ((l/H)\psi_{H}^{-1}(\al \phi_{H}^*)^*,i \sigma^{k},\tau/H)
\]
and going around the top direction gives
\[
(l, i, \tau) \mapsto (l \al^*, i, \tau) \times (H \subset \G, i, \tau) \mapsto (A^* \lra{l \al^*} \G \to \G/H, i \sigma^{k},\tau/H).
\]
We want to show that the two resulting deformations are equal. Taking the quotient with respect to $H \subset \qz$ and its image in $\G$ the level structure $l$ gives the commutative diagram
\[
\xymatrix{\qz \ar[r]^l \ar[d]_{q_H} & \G \ar[d] \\ \qz/H \ar[r]^{l/H} & \G/H.}
\]
Thus the composite $A^* \lra{l \al^*} \G \to \G/H$ is equal to $(l/H)q_H\al^*$. Now
\[
(l/H)\psi_{H}^{-1}(\al \phi_{H}^*)^* = (l/H)\psi_{H}^{-1}\phi_{H}^*\al^* = (l/H)q_H\al^*
\]
by Diagram \eqref{psidiagram}, so we are finished.
\end{proof}


\section{The character of the power operation} \label{secmain} 

With all of these tools in hand, we can finally get down to business. We'd like to construct a ``power operation" on generalized class functions that is compatible with the total power operation on $E$ through the character map of Hopkins, Kuhn, and Ravenel. We can say this diagramatically: we'd like to construct an operation
\[ 
\Cl_n(G,C_0) \to \Cl_n(G \times \Sigma_m, C_0)
\]
making the diagram
\[
\xymatrix{E^0(BG) \ar[r]^-{P_m} \ar[d]_-{\chi} & E^0(BG \times B\Sigma_m) \ar[d]^-{\chi} \\ \Cl_n(G,C_0) \ar[r]^{} & \Cl_n(G \times \Sigma_m, C_0)}
\]
commute. Why would we like to do this? Since the ring of generalized class functions is just the $C_0$-valued functions on a set, such a formula should have a particularly simple form. Ideally it would only include information from group theory and information about endomorphisms of the ring $C_0$ and this turns out to be the case. Further, Theorem \ref{thmc} above implies that class functions knows quite a bit about $E^0(BG)$. For instance, the rationalization of $E^0(BG)$ sits inside class functions as the $\Aut(\qz)$-invariants. We will begin by describing the formula for the power operation on class functions as well as some of its consequences and then we will give a description of the proof of the fact that it is compatible with the power operations for $E$-theory.

We will produce more than one power operation on generalized class functions. Let $\Sub(\qz)$ be the set of finite subgroups of $\qz$. Let $\pi \colon \Isog(\qz) \to \Sub(\qz)$ be the surjective map sending an endoisogeny of $\qz$ to its kernel. The map $\pi$ makes $\Isog(\qz)$ into an $\Aut(\qz)$-torsor over $\Sub(\qz)$ and we will write $\Gamma(\Sub(\qz), \Isog(\qz))$ for the set of sections. For each $\phi \in \Gamma(\Sub(\qz), \Isog(\qz))$,
\[
\xymatrix{\Isog(\qz) \ar[d]_{\pi} \\ \Sub(\qz), \ar@/_/[u]_{\phi}} 
\]
we will produce an operation
\[
P_{m}^{\phi} \colon \Cl_n(G, C_0) \to \Cl_n(G \times \Sigma_m, C_0)
\] 
compatible with the power operation on $E$ through the character map. 


Given a subgroup $H \subset \qz$, let $\phi_H \colon \qz \to \qz$ be the corresponding isogeny of $\qz$, so $\phi_H \in \Isog(\qz)$ with $\ker(\phi_H) = H$. Let $f \in \Cl_n(G, C_0)$ be a generalized class function. To define $P_{m}^{\phi}$ we only need to give a value for $P_{m}^{\phi}(f)$ on a conjugacy class $[\lat \to G \times \Sigma_m]$. Proposition \ref{conjiso} implies that this conjugacy class corresponds to a pair $([\al \colon \lat \to G],\oplus H_i)$. We set
\[
P_{m}^{\phi}(f)([\al],\Oplus{i} H_i) = \Prod{i}\big (f([\al \circ \phi_{H_i}^{*}])\phi_{H_i}\big ).
\]
This formula may look like it has come out of the blue. Let us verify that it makes sense in the case that we understand best:

\begin{prop} \label{AHSchar}
Let $A$ be a finite abelian group and let $\phi \in \Gamma(\Sub(\qz), \Isog(\qz))$, then there is a commutative diagram
\[
\xymatrix{E^0(BA) \ar[rr]^-{P_{p^k}/I_{\tr}} \ar[d]_{\chi} & & E^0(BA) \otimes_{E^0} E^0(B\Sigma_{p^k})/I_{\tr} \ar[d]^{\chi \otimes \chi} \\ \Cl_n(A,C_0) \ar[rr]^-{P_{p^k}^{\phi}/I_{\tr}} & & \Cl_n(A,C_0) \otimes_{C_0} \Cl_n(\Sigma_{p^k},C_0)/I_{\tr}.}
\]
\end{prop}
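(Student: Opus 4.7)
The plan is to verify the square coordinate-by-coordinate on the right-hand side. Since $A$ is abelian and since the class function ring factors as a finite product, there is a canonical identification
\[
\Cl_n(A,C_0) \otimes_{C_0} \Cl_n(\Sigma_{p^k},C_0)/I_{\tr} \cong \Prod{([\al],H)} C_0,
\]
where $[\al]$ ranges over $\hom(\lat,A)_{/\sim} = \hom(\lat,A)$ and $H$ ranges over $\Sub_{p^k}(\qz)$, using Proposition \ref{conjiso} together with the description of $\Cl_n(\Sigma_{p^k},C_0)/I_{\tr}$ established in the previous section. It therefore suffices to fix a pair $([\al],H)$ and show that the two composite maps $E^0(BA) \to C_0$ obtained by first traversing the square and then projecting to the $([\al],H)$-factor coincide.

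Next I would trace each composite explicitly. Starting from $x \in E^0(BA)$ and going along the bottom, the character map produces the function $[\gamma] \mapsto \chi_{[\gamma]}(x)$, and then the definition of $P_{p^k}^{\phi}$ evaluated at the transitive class corresponding to $([\al],H)$ gives $\chi_{[\al \phi_H^*]}(x)\,\phi_H \in C_0$. Going along the top, one applies $P_{p^k}/I_{\tr}$ and then the projection of the character map onto the $([\al],H)$-factor, which, by Equation \eqref{chih} and the construction of the Kunneth character map, is exactly $\chi_{[\al]} \otimes \chi_H$. The required equality
\[
(\chi_{[\al]} \otimes \chi_H)\bigl(P_{p^k}/I_{\tr}(x)\bigr) \;=\; \phi_H\bigl(\chi_{[\al \phi_H^*]}(x)\bigr)
\]
is then exactly the commutative square of Proposition \ref{AHSrestricted}.

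Two pieces of bookkeeping would need to be confirmed along the way. First, the bottom horizontal map $P_{p^k}^{\phi}/I_{\tr}$ is well defined: under the identification of $\Cl_n(\Sigma_{p^k},C_0)/I_{\tr}$ with functions on $\Sub_{p^k}(\qz)$, the formula for $P_{p^k}^{\phi}$ at a non-transitive class $([\al],\oplus_i H_i)$ with more than one summand is simply discarded, and on the remaining transitive classes the single-subgroup formula applies verbatim. Second, one must confirm that the formula respects the base change $\Cl_n(A,C_0) \otimes_{C_0} \Cl_n(\Sigma_{p^k},C_0)/I_{\tr}$; this is automatic because $P_{p^k}^{\phi}$ is $C_0$-linear in the first factor by construction.

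The main obstacle is thus not this final reduction, which is essentially bookkeeping, but rather the algebro-geometric identity encoded in Proposition \ref{AHSrestricted}; that input in turn depends on Theorem \ref{ahs} of Ando-Hopkins-Strickland identifying the additive power operation with the quotient-by-$H$ map on deformations, together with the compatibility of the $\Isog(\qz)$-action on $\Sect_{\Level(\qz,\Gu)}$ (captured by Diagram \eqref{psidiagram}) with this quotient construction. With Proposition \ref{AHSrestricted} in hand, the present statement follows by packaging together one instance of that proposition for each pair $([\al],H)$.
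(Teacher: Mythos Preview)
Your proposal is correct and follows essentially the same approach as the paper: reduce to a single factor $([\al],H)$, unwind the formula for $P_{p^k}^{\phi}/I_{\tr}$ on that factor, and invoke Proposition~\ref{AHSrestricted}. One small slip: the remark that ``$P_{p^k}^{\phi}$ is $C_0$-linear in the first factor'' is not accurate (the action of $\phi_H$ on $C_0$ is nontrivial, and in any case the power operation is multiplicative rather than additive), but this does not affect the argument since the identification of the target with $\Cl_n(A,C_0)\otimes_{C_0}\Cl_n(\Sigma_{p^k},C_0)/I_{\tr}$ is just the K\"unneth isomorphism for class functions.
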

\begin{proof}
Let $([\al \colon \lat \to A],H)$ correspond to a conjugacy class $[\lat \to A \times \Sigma_{p^k}]$ which is transitive on the factor $\Sigma_{p^k}$. Let $f \in \Cl_n(A,C_0)$, then the definition of $P_{p^k}^{\phi}/I_{\tr}$ applied to $f$ is
\[
P_{p^k}^{\phi}/I_{\tr}(f)([\al],H) = f([\al \phi_{H}^*])\phi_H.
\]
Thus the value of $P_{p^k}^{\phi}/I_{\tr}(f)$ on the conjugacy class $([\al],H)$ is determined by the value of $f$ on $[\al \phi_{H}^*]$. Hence, it suffices to prove that the diagram
\[
\xymatrix{E^0(BA) \ar[r]^-{P_{p^k}/I_{\tr}} \ar[d]_{\chi_{[\al \phi_{H}^{*}]}} & E^0(BA) \otimes_{E^0} E^0(B\Sigma_{p^k})/I_{\tr} \ar[d]^{\chi_{[\al]} \otimes \chi_H} \\ C_0 \ar[r]^-{\phi_H} & C_0}
\]
commutes. This is Proposition \ref{AHSrestricted}.
\end{proof}


Now that we have checked our sanity, we will prove that these operations satisfy basic properties that we've come to expect from operations called power operations. We give a complete proof of the next lemma in order to help the reader get used to manipulating $P_{m}^{\phi}$.

\begin{lemma}
For all $\phi\in \Gamma(\Sub(\qz), \Isog(\qz))$, the power operation $P_{m}^{\phi}$ is natural in the group variable.
\end{lemma}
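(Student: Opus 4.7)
The plan is to unwind the definitions on both sides and show they give the same answer. Let $\rho \colon G \to G'$ be a homomorphism of finite groups. It induces a map $\rho_* \colon \hom(\lat, G)_{/\sim} \to \hom(\lat, G')_{/\sim}$ by postcomposition, and hence a map $\rho^* \colon \Cl_n(G', C_0) \to \Cl_n(G, C_0)$ given by $(\rho^*f')([\al]) = f'([\rho\al])$. Naturality of $P_m^\phi$ is the statement that for every $f' \in \Cl_n(G', C_0)$, we have
\[
P_m^\phi(\rho^* f') = (\rho \times \id_{\Sigma_m})^* \, P_m^\phi(f')
\]
as elements of $\Cl_n(G \times \Sigma_m, C_0)$.

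The first step is to identify how $\rho \times \id_{\Sigma_m}$ acts on $\hom(\lat, G \times \Sigma_m)_{/\sim}$ under the bijection
\[
\hom(\lat, G \times \Sigma_m)_{/\sim} \cong \hom(\lat, G)_{/\sim} \times \Sum_m(\qz)
\]
coming from Proposition \ref{conjiso} together with the obvious splitting on a product of groups. Since the construction in Proposition \ref{conjiso} depends only on the $\Sigma_m$-component, and $\rho \times \id_{\Sigma_m}$ acts as the identity there, the map $(\rho \times \id)_*$ takes a class $([\al], \oplus_i H_i)$ to $([\rho\al], \oplus_i H_i)$.

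Now I would evaluate both sides on a class $([\al], \oplus_i H_i)$. The right-hand side unwinds directly:
\[
\big((\rho \times \id)^* P_m^\phi(f')\big)([\al], \oplus_i H_i) = P_m^\phi(f')([\rho\al], \oplus_i H_i) = \prod_i f'([\rho\al \circ \phi_{H_i}^*]) \phi_{H_i}.
\]
For the left-hand side, applying the definition of $P_m^\phi$ and then of $\rho^*$ gives
\[
P_m^\phi(\rho^*f')([\al], \oplus_i H_i) = \prod_i (\rho^*f')([\al \circ \phi_{H_i}^*]) \phi_{H_i} = \prod_i f'([\rho \circ \al \circ \phi_{H_i}^*]) \phi_{H_i}.
\]
The two expressions agree term by term, so naturality holds. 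There is no real obstacle here; the only thing to be careful about is matching the bijection of Proposition \ref{conjiso} to the product decomposition, which is a direct consequence of the fact that a $\lat$-set structure on $\um$ together with a commuting $G$-action is the same as a $\lat$-set decomposition of $\um$ paired with a $G$-valued map out of $\lat$.
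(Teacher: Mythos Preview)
Your proof is correct and follows essentially the same approach as the paper: both unwind the definition of $P_m^\phi$ on a class $([\al],\oplus_i H_i)$ and check that applying $\rho^*$ before or after yields the same product $\prod_i f'([\rho\al\circ\phi_{H_i}^*])\phi_{H_i}$. You add a brief justification of how $(\rho\times\id_{\Sigma_m})_*$ acts under the bijection of Proposition~\ref{conjiso}, which the paper leaves implicit, but otherwise the arguments are identical.
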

\begin{proof}
Let $\gamma \colon G \to K$ be a group homomorphism so that 
\[
\gamma^* \colon \Cl_n(K,C_0) \to \Cl_n(G,C_0)
\]
is defined by $(\gamma^* f)([\al]) = f([\gamma \al])$ for $[\al \colon \lat \to G]$. We wish to show that 
\[
P_{m}^{\phi}(\gamma^* f) = (\gamma \times \id_{\Sigma_m})^*P_{m}^{\phi}(f).
\]
This follows because
\begin{align*}
P_{m}^{\phi}(\gamma^* f)([\al], \oplus_i H_i) &= \Prod{i} (\gamma^* f)([\al \phi_{H_i}^*]) \phi_{H_i} \\
&= \Prod{i} f([\gamma \al \phi_{H_i}^*]) \phi_{H_i} \\
&= P_{m}^{\phi}(f)([\gamma \al], \oplus_i H_i) \\
&= (\gamma \times \id_{\Sigma_m})^*P_{m}^{\phi}(f).
\end{align*}
\end{proof}

Given two power operations on class function $P^{\phi}_{i}$ and $P^{\phi}_{j}$, their external product is the map
\[
P^{\phi}_{i} \times P^{\phi}_{j} \colon \Cl_n(G,C_0) \to \Cl_n(G \times \Sigma_i \times G \times \Sigma_j, C_0)
\]
given by the formula
\[
(P^{\phi}_{i} \times P^{\phi}_{j})(f)(([\al_1], \Oplus{i}H_i),([\al_2],\Oplus{i}K_i)) = P^{\phi}_{i}(f)([\al_1], \Oplus{i}H_i)P^{\phi}_{j}(f)([\al_2], \Oplus{i}K_i).
\]
Let $\Delta \colon G \to G \times G$ be the diagonal map. Restricting $P^{\phi}_{i} \times P^{\phi}_{j}$ along the diagonal gives us a map
\[
\Delta^*(P^{\phi}_{i} \times P^{\phi}_{j}) \colon \Cl_n(G,C_0) \to \Cl_n(G \times G \times \Sigma_i \times \Sigma_j, C_0)
\]
that we will denote by $\Delta^*(P^{\phi}_{i} \times P^{\phi}_{j})$. The following lemma is an analogue of the identity involving power operations in Diagram \eqref{powerdiagram}. We leave the proof to the reader.

\begin{lemma} \label{powerdiagram2}
Let $\phi \in \Gamma(\Sub(\qz), \Isog(\qz))$ and let $i,j>0$ with $i+j = m$, then restriction along the inclusion $\Delta_{i,j} \colon \Sigma_i \times \Sigma_j \subset \Sigma_m$ induces the commutative diagram
\[
\xymatrix{\Cl_n(G,C_0) \ar[r]^-{P^{\phi}_{m}} \ar[dr]_<<<<<<{\Delta^*(P^{\phi}_{i} \times P^{\phi}_{j})} & \Cl_n(G \times \Sigma_m, C_0) \ar[d]^{\Delta_{i,j}^{*}} \\ & \Cl_n(G \times \Sigma_i \times \Sigma_j, C_0).}
\]
\end{lemma}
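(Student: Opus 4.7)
The plan is to verify commutativity by directly evaluating both compositions on an arbitrary generalized class function and an arbitrary conjugacy class, and observing that the product-over-summands form of the formula for $P_{m}^{\phi}$ makes the result essentially tautological once one sets up notation properly.

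Let $f \in \Cl_n(G,C_0)$. A conjugacy class in $\hom(\lat, G \times \Sigma_i \times \Sigma_j)_{/\sim}$ corresponds, via Proposition \ref{conjiso} applied separately to the $\Sigma_i$ and $\Sigma_j$ factors, to a triple $([\al],\oplus_k H_k, \oplus_l K_l)$ with $\al \colon \lat \to G$, $\oplus_k H_k \in \Sum_i(\qz)$, and $\oplus_l K_l \in \Sum_j(\qz)$. So it suffices to check that $\Delta_{i,j}^{*}P_{m}^{\phi}(f)$ and $\Delta^*(P_{i}^{\phi}\times P_{j}^{\phi})(f)$ agree on each such triple.

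For the top-right composition, Lemma \ref{summap} identifies the image of $([\al],\oplus_k H_k, \oplus_l K_l)$ along $\id_G\times \Delta_{i,j}$ with $([\al],\oplus_k H_k \oplus \oplus_l K_l)$ in $\hom(\lat,G\times\Sigma_m)_{/\sim}$. Plugging into the defining formula for $P_{m}^{\phi}$ yields
\[
\Delta_{i,j}^{*}P_{m}^{\phi}(f)([\al],\oplus_k H_k, \oplus_l K_l) \;=\; \Prod{k} f([\al\,\phi_{H_k}^{*}])\,\phi_{H_k}\cdot\Prod{l} f([\al\,\phi_{K_l}^{*}])\,\phi_{K_l}.
\]
For the diagonal composition, unravel the definition of the external product to obtain
\[
(P_{i}^{\phi}\times P_{j}^{\phi})(f)\bigl(([\al_1],\oplus_k H_k),([\al_2],\oplus_l K_l)\bigr) = P_{i}^{\phi}(f)([\al_1],\oplus_k H_k)\cdot P_{j}^{\phi}(f)([\al_2],\oplus_l K_l),
\]
and then restriction along $\Delta \colon G \to G\times G$ sets $\al_1=\al_2=\al$. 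Inserting the defining formulas for $P_{i}^{\phi}$ and $P_{j}^{\phi}$ produces exactly the same product of factors as above.

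Since the two compositions agree on every conjugacy class, they are equal as maps $\Cl_n(G,C_0) \to \Cl_n(G\times\Sigma_i\times\Sigma_j,C_0)$. There is no real obstacle here: the only non-formal input is the combinatorial identification in Lemma \ref{summap} of the restriction map on conjugacy classes induced by $\Delta_{i,j}$, after which the multiplicativity built into the definition $P_{m}^{\phi}(f)([\al],\oplus_i H_i)=\prod_i f([\al\,\phi_{H_i}^{*}])\phi_{H_i}$ distributes the product over the two blocks of summands and yields the identity automatically.
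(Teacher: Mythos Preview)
Your proof is correct and is precisely the direct verification the paper has in mind; the paper itself states ``We leave the proof to the reader'' for this lemma, and your computation---identifying the conjugacy class via Lemma~\ref{summap} and then observing that the product formula for $P_{m}^{\phi}$ splits over the two blocks of summands---is exactly the intended argument.
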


Finally we can state the result that we have been working towards:

\begin{thm} \label{mainthm}
Let $\phi \in \Gamma(\Sub(\qz), \Isog(\qz))$ and let $G$ be a finite group. There is a commutative diagram 
\[
\xymatrix{E^0(BG) \ar[r]^-{P_m} \ar[d]_{\chi} & E^0(BG \times B\Sigma_m) \ar[d]^{\chi} \\ \Cl_n(G,C_0) \ar[r]^-{P_{m}^{\phi}} & \Cl_n(G \times \Sigma_m, C_0).}
\]
\end{thm}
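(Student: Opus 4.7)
The plan is to verify the commutativity of the square at each conjugacy class separately. Since $\Cl_n(G \times \Sigma_m, C_0)$ is a product of copies of $C_0$ indexed by $\hom(\lat, G \times \Sigma_m)_{/\sim}$, and by \cref{conjiso} every such conjugacy class is specified by a pair $([\al], \Oplus{i \in I} H_i)$ with $\sum_i |H_i| = m$, it suffices to check the equality of the two composites at each such pair. I will proceed by induction on $|I|$.

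For the inductive step, suppose $|I| > 1$ and decompose $I = I_1 \sqcup I_2$ with both parts nonempty. Setting $i = \sum_{I_1} |H_l|$ and $j = \sum_{I_2} |H_l|$, \cref{summap} lifts the class along $\Delta_{i,j}$ to the class $([\al], \Oplus{I_1} H_l, \Oplus{I_2} H_l)$ in $\hom(\lat, G \times \Sigma_i \times \Sigma_j)_{/\sim}$. Applying \eqref{powerdiagram} gives $\Delta_{i,j}^* P_m(x) = \Delta^*(P_i(x) \times P_j(x))$, and the Kunneth isomorphism together with the definition of $\chi$ expresses the value of $\chi(P_m(x))$ at the lifted class as the product of $\chi_{[(\al, \Oplus{I_1}H_l)]}(P_i(x))$ and $\chi_{[(\al, \Oplus{I_2}H_l)]}(P_j(x))$. \cref{powerdiagram2} produces the analogous factorization of $P_m^\phi(\chi(x))$, and the inductive hypothesis applied to $P_i$ and $P_j$ concludes the step.

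The base case $|I| \leq 1$ is the heart of the argument. If $|I| = 0$ then $m = 0$ and both sides are $1$; assume $|I| = 1$, so that $H \subset \qz$ has order $m = p^k$ and corresponds to a transitive homomorphism $\gamma_H \colon \lat \to \Sigma_{p^k}$. Pick $k$ large enough that $\al$ factors through $\al \colon \lat_{p^k} \to G$. Naturality of $P_{p^k}$ in the space variable applied to $B\al$ gives
\[
\chi_{[(\al, \gamma_H)]}(P_{p^k}(x)) = \chi_{[(\id, \gamma_H)]}(P_{p^k}(\al^*(x))).
\]
Since $\gamma_H$ is transitive, the character $\chi_{[(\id, \gamma_H)]}$ annihilates the image of the transfers from proper products $\Sigma_i \times \Sigma_j$, so it factors through $E^0(B\lat_{p^k}) \otimes_{E^0} E^0(B\Sigma_{p^k})/I_{\tr}$, and under the Kunneth isomorphism it equals $\chi_{[\id]} \otimes \chi_H$. \cref{AHSchar} then identifies the resulting expression with $\chi_{[\al \phi_H^*]}(x) \cdot \phi_H$, which is precisely the definition of $P_{p^k}^\phi(\chi(x))([\al], H)$.

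I expect the main obstacle to be the bookkeeping in the base case: verifying that restriction along the transitive class $[(\id, \gamma_H)]$ really does decompose as $\chi_{[\id]} \otimes \chi_H$ under Kunneth (using \cref{subiso} to match transitive conjugacy classes with $\Sub_{p^k}(\qz)$), and checking that the isogeny $\phi_H$ emerging from the Ando-Hopkins-Strickland description is compatible with the isogeny chosen by the section $\phi$. A secondary subtlety handled by the $k=0$ case of \cref{AHSchar} is the implicit fact that the image of $\chi$ is contained in the $\Aut(\qz)$-fixed subring of $\Cl_n(G,C_0)$, which is tacitly needed whenever $\phi_H$ happens to be an automorphism.
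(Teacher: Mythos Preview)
Your argument is correct and uses the same essential ingredients as the paper---the power-operation identity \eqref{powerdiagram}, its class-function analogue \cref{powerdiagram2}, and \cref{AHSrestricted}---but it is organized rather differently. The paper proceeds by three global reductions: first to $m=p^k$ via the injection of \cref{lem1}, then to abelian $G$ via the injection of \cref{lem2}, and finally by induction on $k$ using the injection of \cref{lem3}, with base case $k=0$ handled separately. You instead work pointwise at a single conjugacy class and induct on the number of summands $|I|$; your inductive step collapses the paper's first and third reductions into one use of \eqref{powerdiagram}/\cref{powerdiagram2}, and your base case absorbs the paper's abelian reduction by invoking naturality of $P_{p^k}$ along $B\bar\alpha$ rather than the cube built from \cref{lem2}. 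The payoff of your route is that Lemmas~\ref{lem1}--\ref{lem3} are never needed; the paper's route, by contrast, makes the three logically distinct reductions visible and keeps the induction variable tied to the height of the symmetric group.

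Two small points. First, in your base case you write ``pick $k$ large enough that $\alpha$ factors through $\lat_{p^k}$,'' but $k$ is already fixed by $m=p^k$; you mean to factor $\alpha$ through some $\latpk[']$ with $k'$ large, and since \cref{AHSrestricted} applies to any finite abelian $A$ this is harmless. Second, your worry about the section is unfounded: \cref{AHSrestricted} is stated for an \emph{arbitrary} isogeny $\phi_H$ with kernel $H$, so feeding in the value of your chosen section gives exactly what you need. Your observation that the $|I|=1$, $H=e$ case encodes the $\Aut(\qz)$-invariance of the image of $\chi$ is exactly the content of the paper's separate $k=0$ base case.
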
 

It may seem unsatisfying that any choice of section gives us a power operation on generalized class functions that is compatible with the power operation on Morava $E$-theory. This is essentially a consequence of the fact that $C_0$ is an $\Aut(\qz)$-Galois extension of $\Q \otimes E^0$. It turns out that the choice disappears after taking the $\Aut(\qz)$-invariants of the ring of generalized class functions!

\begin{example}
As we have seen, when $(\F, \ka) = (\hat{\G}_m, \F_p)$, $E = K_p$. In this case $n=1$ and $\qz = \Q_p/\Z_p$. Hence there is a canonical element in $\Gamma(\Sub(\qz), \Isog(\qz))$, the section sending $\qz[p^k]$ to the multiplication by $p^k$ map on $\qz$. It turns out that, in this case, the action of these isogenies on $C_0$ is trivial. Thus the formula for $P_{m}^{\phi}$ simplifies for this choice of section. 
\end{example}

The proof of Theorem \ref{mainthm} has three steps. We will first reduce to the case that $m = p^k$, then we will reduce to the case that $G$ is abelian and finally we will perform an induction on $k$. Each step will require a certain map between class functions to be injective. We present these three injections now as three lemmas.

\begin{lemma} \label{lem1}
Let $\sum_j a_j p^j$ be the $p$-adic expansion of $m$. The inclusion
$\prod_{j} \Sigma_{p^j}^{\times a_j} \subseteq \Sigma_m$ induces an injection
\[
\Cl_n(\Sigma_{m},C_0) \hookrightarrow \bigotimes_{j} \Cl_n(\Sigma_{p^j},C_0)^{\otimes a_j}.
\]
\end{lemma}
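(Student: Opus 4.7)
The strategy is to translate the whole statement through Proposition \ref{conjiso} so that both rings become literal products of copies of $C_0$ indexed by sets of conjugacy classes, and then identify the restriction map with pullback along a surjection of these sets.

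First I would unpack both sides. By Proposition \ref{conjiso} the conjugacy classes $\hom(\lat,\Sigma_m)_{/\sim}$ are canonically identified with $\Sum_m(\qz)$, so that
\[
\Cl_n(\Sigma_m,C_0) \;\cong\; \Prod{\Sum_m(\qz)} C_0.
\]
Since $\hom(\lat,K_1\times K_2)_{/\sim}\cong \hom(\lat,K_1)_{/\sim}\times\hom(\lat,K_2)_{/\sim}$ and $\Cl_n(K,C_0)$ is by definition a product of copies of $C_0$ indexed by the set of conjugacy classes, the Künneth-style identification $\Cl_n(K_1\times K_2,C_0)\cong \Cl_n(K_1,C_0)\otimes_{C_0}\Cl_n(K_2,C_0)$ holds, and iterating it yields
\[
\bigotimes_j \Cl_n(\Sigma_{p^j},C_0)^{\otimes a_j} \;\cong\; \Cl_n\!\Big(\textstyle\prod_j \Sigma_{p^j}^{\times a_j},\,C_0\Big) \;\cong\; \Prod{\prod_j \Sum_{p^j}(\qz)^{\times a_j}} C_0,
\]
with all tensor products taken over $C_0$.

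Next I would identify the map of the lemma with pullback along the corresponding map of indexing sets. Restriction of a $\Sigma_m$-conjugation-invariant, $C_0$-valued function on $\hom(\lat,\Sigma_m)$ to the subgroup $H=\prod_j \Sigma_{p^j}^{\times a_j}$ is automatically $H$-conjugation-invariant, so restriction factors through the map of quotients $\hom(\lat,H)_{/\sim_H}\to \hom(\lat,\Sigma_m)_{/\sim}$. Under Proposition \ref{conjiso} this map is exactly the canonical map
\[
\prod_j \Sum_{p^j}(\qz)^{\times a_j} \;\longrightarrow\; \Sum_m(\qz)
\]
of Corollary \ref{padicsum}, and the restriction homomorphism between rings of $C_0$-valued functions is the precomposition with this map.

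Finally, Corollary \ref{padicsum} asserts that this map of indexing sets is surjective, and pullback of $C_0$-valued functions along a surjection of sets is manifestly injective, which gives the lemma. The argument is essentially bookkeeping built on results already proved; the only thing that requires care is verifying that the Künneth identification of tensor products with products of $C_0$'s really sends the restriction map to the pullback along the Corollary \ref{padicsum} map, but this is forced by the naturality of Proposition \ref{conjiso} in the group variable.
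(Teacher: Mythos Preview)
Your argument is correct and is exactly the paper's approach: the paper's proof consists of the single sentence ``This follows immediately from Corollary \ref{padicsum},'' and your proposal simply unpacks that sentence, identifying both sides as $C_0$-valued functions on the relevant indexing sets via Proposition \ref{conjiso} and observing that restriction is pullback along the surjection of Corollary \ref{padicsum}.
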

\begin{proof}
This follows immediately from Corollary \ref{padicsum}.
\end{proof}

We leave the proof of the next lemma to the reader.
\begin{lemma} \label{lem2}
Let $G$ be a finite group. The product of the restriction maps to abelian subgroups of $G$ is an injection
\[
\Cl_n(G,C_0) \hookrightarrow \Prod{A \subset G} \Cl_n(A,C_0),
\]
where the product ranges over all abelian subgroups of $G$.
\end{lemma}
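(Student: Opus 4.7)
The plan is to unwind the definitions and exploit the fact that $\lat = \Z_p^n$ is abelian, so every continuous homomorphism $\al \colon \lat \to G$ has abelian image. Concretely, since $\hom(\lat, G)$ is identified with $n$-tuples of pairwise commuting $p$-power order elements of $G$, the subgroup $\im(\al) \subset G$ is a finite abelian $p$-subgroup. This means that $\al$ factors as
\[
\lat \lra{\bar{\al}} \im(\al) \lra{\iota} G,
\]
where $\iota$ is the inclusion of an abelian subgroup of $G$.

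I would then recall that the restriction map $\iota^* \colon \Cl_n(G,C_0) \to \Cl_n(A,C_0)$ associated to an inclusion $\iota \colon A \hookrightarrow G$ is given by precomposition on conjugacy classes, namely $(\iota^* f)([\beta]) = f([\iota \beta])$ for $\beta \colon \lat \to A$. Applying this to the factorization above, one has the identity $(\iota^* f)([\bar{\al}]) = f([\al])$.

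Injectivity is then immediate: suppose $f \in \Cl_n(G, C_0)$ lies in the kernel of the product of all restriction maps to abelian subgroups, so that $\iota^* f = 0$ for every abelian $A \subset G$. Given any $[\al] \in \hom(\lat, G)_{/\sim}$, set $A = \im(\al)$ and let $\bar{\al}$ be the corresponding homomorphism $\lat \to A$. Then
\[
f([\al]) = (\iota^* f)([\bar{\al}]) = 0,
\]
so $f = 0$.

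There is no real obstacle here; the only subtle point is that the factorization through the image passes to conjugacy classes cleanly, which holds because conjugation in $G$ preserves the image and the resulting class in $A$ is well-defined up to conjugation by elements of $A$ (and only its class in $\hom(\lat,A)_{/\sim}$ is needed to recover $f([\al])$). The essential content of the lemma is thus just that $\lat$ is abelian, so homomorphisms out of $\lat$ never see non-abelian subgroups of $G$.
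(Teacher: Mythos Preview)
Your argument is correct and is exactly the one the paper intends (the author explicitly leaves this proof to the reader). The key observation is precisely that $\lat$ is abelian, so any $\al\colon\lat\to G$ factors through the abelian subgroup $\im(\al)$, and evaluating the restriction there recovers $f([\al])$.

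One small comment: your closing remark is slightly more than you need and is a bit off as stated. Conjugation in $G$ does \emph{not} preserve $\im(\al)$ in general; a different representative $g\al g^{-1}$ factors through the conjugate subgroup $g\,\im(\al)\,g^{-1}$, not through $\im(\al)$. But this is harmless: you only need to show $f([\al])=0$ for each class, and for that it suffices to choose \emph{one} representative $\al$, set $A=\im(\al)$, and observe $(\iota^*f)([\bar\al])=f([\al])$. Since the product ranges over all abelian subgroups, the particular $A$ you land in is always among them. No compatibility across representatives is required.
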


\begin{lemma} \label{lem3}
For all $k>0$, there is an injection
\[
\Cl_n(\Sigma_{p^{k}},C_0) \hookrightarrow \Cl_n(\Sigma_{p^{k-1}}^{\times p},C_0) \times \Cl_n(\Sigma_{p^{k}},C_0)/I_{\tr},
\]
where the map to the left factor is the restriction along $\Sigma_{p^{k-1}}^{\times p} \subset \Sigma_{p^k}$ and the map to the right factor is the quotient map.
\end{lemma}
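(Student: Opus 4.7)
The plan is to unwind everything using Proposition \ref{conjiso} and reduce the statement to a small combinatorial partition lemma about multisets of powers of $p$. By Proposition \ref{conjiso}, $\Cl_n(\Sigma_{p^k},C_0) \cong \prod_{\Sum_{p^k}(\qz)} C_0$, and by Corollary \ref{subiso} the subset of transitive conjugacy classes is identified with $\Sub_{p^k}(\qz)$. The description of $I_{\tr}$ given in the proof of the proposition preceding Theorem \ref{strickland} shows that the quotient $\Cl_n(\Sigma_{p^k},C_0) \to \Cl_n(\Sigma_{p^k},C_0)/I_{\tr}$ is literally projection onto the coordinates indexed by the single-summand sums. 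So if $f$ lies in the kernel of both maps in the statement, the quotient condition forces $f(H) = 0$ for every single subgroup $H$ of order $p^k$, and it remains only to verify $f(T) = 0$ on the non-transitive $T = \bigoplus_i H_i \in \Sum_{p^k}(\qz)$, i.e.\ those with at least two summands (equivalently, with every $|H_i| \leq p^{k-1}$).

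Iterating Lemma \ref{summap} along the inclusion $\Sigma_{p^{k-1}}^{\times p} \hookrightarrow \Sigma_{p^k}$, the restriction on class functions corresponds to pullback along the concatenation map
\[
\Sum_{p^{k-1}}(\qz)^{\times p} \to \Sum_{p^k}(\qz), \qquad (T_1,\ldots,T_p) \mapsto T_1 \oplus \cdots \oplus T_p.
\]
The vanishing of the restriction of $f$ then gives $f(T) = 0$ for every $T$ in the image of this map, so it suffices to show every non-transitive $T$ lies in the image. Equivalently, the multiset of subgroups $\{H_i\}$ appearing in $T$ must admit a partition into $p$ submultisets whose total orders each equal $p^{k-1}$.

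The core of the proof is therefore the following partition lemma, which I expect to be the only substantive step: for $S \geq 1$ and $j \geq 0$, any multiset of powers of $p$, each at most $p^j$, whose elements sum to $S p^j$ can be partitioned into $S$ submultisets each summing to $p^j$. I would prove it by induction on $j$, the base $j=0$ being trivial. For the inductive step, place each summand equal to $p^j$ in its own singleton submultiset; the remaining summands are all $\leq p^{j-1}$ and sum to a multiple of $p^{j-1}$ divisible by $p$, so the inductive hypothesis partitions them into the appropriate number of submultisets each summing to $p^{j-1}$, which can then be bundled in groups of $p$ to produce submultisets of sum $p^j$. Applied with $S = p$ and $j = k-1$ to the multiset of orders $\{|H_i|\}$ (all of which are at most $p^{k-1}$ by non-transitivity), this exhibits $T$ as $T_1 \oplus \cdots \oplus T_p$ in the image of the concatenation map, giving $f(T) = 0$ and finishing the proof. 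Everything else in the argument is a direct unwinding of definitions.
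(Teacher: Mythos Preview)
Your proof is correct and follows the same approach as the paper: identify the map as $C_0$-valued functions on the set map $\Sub_{p^k}(\qz) \coprod \Sum_{p^{k-1}}(\qz)^{\times p} \to \Sum_{p^k}(\qz)$ and show this is surjective. The paper simply asserts the surjectivity, whereas you supply and prove the partition lemma that makes it explicit.
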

\begin{proof}
Applying Proposition \ref{conjiso} and Corollary \ref{subiso} as well as Lemma \ref{summap}, we see that this map is the $C_0$-valued functions on the map of sets
\[
\Sub_{p^k}(\qz) \coprod \Sum_{p^{k-1}}(\qz)^{\times p} \to \Sum_{p^k}(\qz)
\]
sending a subgroup in the first component to itself and the $p$-tuple of sums of subgroups in the second component to their sum. This is a surjective map of sets.
\end{proof}

The rest of this section constitutes a proof of Theorem \ref{mainthm}. We begin with the reduction to $m = p^k$. Consider the commutative diagram 
\[
\xymatrix{E^0(BG) \ar[r]^-{P_m} \ar[d] & E^0(BG \times B\Sigma_m) \ar[r] \ar[d] & E^0(BG \times \prod_{j} B\Sigma_{p^j}^{a_j}) \ar[d] \\ \Cl_n(G,C_0)  \ar[r]^-{P_{m}^{\phi}}  & \Cl_n(G \times \Sigma_m, C_0) \ar@{^{(}->}[r] & \Cl_n(G \times \prod_{j} \Sigma_{p^j}^{a_j}, C_0).}
\]
The composite along the top is the external product of power operations $\Delta^*(\prod_{j}P_{p_j}^{\times a_j})$. The bottom composite is the external product $\Delta^*(\prod_{j}(P_{p_j}^{\phi})^{\times a_j})$. Since the bottom right arrow is an injection by Lemma \ref{lem1}, to prove that the left hand square commutes, it suffices to prove the large square commutes. The large square is built only using power operations of the form $P_{p^k}$ and $P_{p^k}^{\phi}$.

Now we describe the reduction to finite abelian groups. Assume that the diagram
\[
\xymatrix{E^0(BA) \ar[r]^-{P_{p^k}} \ar[d]_{\chi} & E^0(BA \times B\Sigma_{p^k}) \ar[d]^{\chi} \\ \Cl_n(A,C_0) \ar[r]^-{P_{p^k}^{\phi}} & \Cl_n(A \times \Sigma_{p^k}, C_0)}
\]
commutes for any choice of $\phi$ and any finite abelian group. Consider the cube
\[\resizebox{\columnwidth}{!}{
\xymatrix{& E^0(BG) \ar[ld]_{P_{p^k}} \ar'[d][dd] \ar[rr] & & \Prod{A \subseteq G} E^0(BA) \ar[ld]^{\prod P_{p^k}} \ar[dd] \\
E^0(BG \times B\Sigma_{p^k}) \ar[dd] \ar[rr] & &  \Prod{A\subseteq G} E^0(BA \times B\Sigma_{p^k}) \ar[dd] \\
& \Cl_n(G,C_0) \ar[ld]_{P_{p^k}^{\phi}} \ar@{^{(}->}'[r][rr] & &  \Prod{A \subseteq G}\Cl_n(A,C_0) \ar[ld]^{\prod P_{p^k}^{\phi}} \\
\Cl_n(G \times \Sigma_{p^k},C_0) \ar@{^{(}->}[rr]  &&  \Prod{A\subseteq G}\Cl_n(A \times \Sigma_{p^k},C_0).}
}\]
We do not claim that this cube commutes yet. Since Lemma \ref{lem2} implies that the horizontal arrows of the bottom face are injections, a diagram chase shows that the left face commutes if the right face commutes. This reduces us to the case of finite abelian groups.

Now we come to the inductive part of the proof. The base case of our induction, $k=0$, is the following proposition:

\begin{prop}
For each section $\phi \in \Gamma(\Sub(\qz), \Isog(\qz))$, there is a commutative diagram
\[
\xymatrix{E^0(BA) \ar[r]^-{P_1} \ar[d]_{\chi} & E^0(BA) \ar[d]^{\chi} \\ \Cl_n(A,C_0) \ar[r]^-{P_{1}^{\phi}} & \Cl_n(A, C_0).}
\]
\end{prop}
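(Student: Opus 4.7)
The plan is to observe that both power operations degenerate in the case $m=1$ and then to use the fact that the image of the character map lies in the $\Aut(\qz)$-invariants.

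First I would compute both sides of the square explicitly. Since $\Sigma_1$ is the trivial group, $B\Sigma_1 \simeq *$ and the total power operation $\Po_1$ is, by construction, the identity map; after restricting along the diagonal and identifying $E^0(BA \times B\Sigma_1) \cong E^0(BA)$, the top map $P_1$ is the identity on $E^0(BA)$. On the class-function side, the set $\hom(\lat, A \times \Sigma_1)_{/\sim}$ is in bijection with pairs $([\al], \{e\})$, where $\{e\} \subset \qz$ is the unique formal sum of subgroups with total order $1$. The chosen section $\phi$ assigns to the trivial subgroup an isogeny $\phi_{\{e\}} \in \Isog(\qz)$ with trivial kernel, i.e.\ an element $\phi_{\{e\}} \in \Aut(\qz)$. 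The defining formula then reads
\[
P_{1}^{\phi}(f)([\al],\{e\}) = f([\al \, \phi_{\{e\}}^{*}])\,\phi_{\{e\}} = (f \cdot \phi_{\{e\}})([\al]),
\]
so $P_1^{\phi}$ is simply the right action of $\phi_{\{e\}}$ on $\Cl_n(A, C_0)$.

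Next I would argue that the character map lands in the $\Aut(\qz)$-fixed points, so that this action is trivial on $\chi(E^0(BA))$. Since $C_0$ is the rationalization of the Drinfeld ring and hence a $\Q$-algebra, the map $\chi$ factors as
\[
E^0(BA) \longrightarrow \Q \otimes E^0(BA) \longrightarrow \Cl_n(A, C_0).
\]
Applying Theorem \ref{thmc}, the second arrow is the inclusion $\Q \otimes E^0(BA) \cong \Cl_n(A,C_0)^{\Aut(\qz)} \hookrightarrow \Cl_n(A,C_0)$ obtained by taking $\Aut(\qz)$-fixed points of the isomorphism $C_0 \otimes_{E^0} E^0(BA) \cong \Cl_n(A,C_0)$ (the action being on the left $C_0$-tensor factor, which obviously fixes everything in $1 \otimes E^0(BA)$). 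Thus for every $x \in E^0(BA)$ and every $\psi \in \Aut(\qz)$, we have $\chi(x) \cdot \psi = \chi(x)$. Specializing to $\psi = \phi_{\{e\}}$ yields $P_1^{\phi}(\chi(x)) = \chi(x) = \chi(P_1(x))$, which is the commutativity claimed. There is no real obstacle here; the only subtlety is verifying that the character map does land in $\Aut(\qz)$-invariants, and this is immediate from the $\Q$-linearity of $C_0$ combined with Theorem \ref{thmc}.
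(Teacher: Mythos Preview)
Your proof is correct, but it follows a genuinely different route from the paper's.

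The paper argues directly at the level of moduli problems: after noting $P_1=\id$ and writing $P_1^\phi(f)([\al])=f([\al\phi_e^*])\phi_e$ with $\phi_e\in\Aut(\qz)$, it reduces to checking that the composite $\Sect_{\Level(\qz,\Gu)}\lra{\phi_e}\Sect_{\Level(\qz,\Gu)}$ with $\chi_{[\al\phi_e^*]}$ equals $\chi_{[\al]}$, and verifies this by tracing a deformation $(l,i,\tau)$ through both routes (essentially the $H=e$ case of the computation in Proposition~\ref{AHSrestricted}). By contrast, you recognize $P_1^\phi$ as the right action of the single element $\phi_{\{e\}}\in\Aut(\qz)$ on class functions, and then invoke the $\Aut(\qz)$-equivariance of $C_0\otimes\chi$ (equivalently Theorem~\ref{thmc}) to conclude that the image of $\chi$ is pointwise fixed by this action.

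Your approach is more conceptual and in fact works verbatim for any finite group $G$, not just abelian $A$; it also buys you the base case essentially for free from HKR. The paper's hands-on computation, on the other hand, is self-contained and rehearses in the easiest case the exact moduli-theoretic manipulation that drives Proposition~\ref{AHSrestricted} and the inductive step, so it has pedagogical value in the flow of Section~\ref{secmain}. Logically there is no circularity in your route: the equivariance of $C_0\otimes\chi$ is part of the HKR package and is stated in the paper before this proposition.
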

\begin{proof}
Recall that $P_1$ is the identity map and let $e \subset \qz$ be the trivial subgroup so $\phi_e$ is an automorphism of $\qz$. The power operation $P_{1}^{\phi}$ is defined by
\[
P_{1}^{\phi}(f)([\al]) = f([\al \phi_e])\phi_e.
\]
Thus, working one factor at a time, it suffices to show that the diagram
\[
\xymatrix{E^0(BA) \ar[r]^-{\id} \ar[d]_{\chi_{[\al \phi_{e}^*]}} & E^0(BA) \ar[d]^{\chi_{[\al]}} \\ \Sect_{\Level(\qz,\Gu)} \ar[r]^{\phi_e} &\Sect_{\Level(\qz,\Gu)}}
\]
commutes. Now the proof follows the same lines as the proof of Proposition \ref{AHSrestricted}. Since $H=e$, the map $\psi_H$ of Diagram \eqref{psidiagram} is just $\phi_e$. Applying Example \ref{characterab} to each of the vertical arrows and Equation \eqref{isogaction} to the bottom arrow, we see that, on the level of moduli problems, going around the bottom direction gives
\[
(l, i, \tau) \mapsto (l\phi_{e}^{-1},i,\tau) \mapsto (l\phi_{e}^{-1}(\al \phi_{e}^*)^*,i,\tau)
\]
and going around the top direction gives
\[
(l, i, \tau) \mapsto (l \al^*, i, \tau).
\]
Since $l\phi_{e}^{-1}(\al \phi_{e}^*)^* = l\phi_{e}^{-1} \phi_{e} \al^* = l\al^*$, we are done.
\end{proof}

We use the following commutative diagram in order to be able to apply our induction hypothesis 
\[
\xymatrix{E^0(BA \times B\Sigma_{p^k}) \ar[d]^{\chi} \ar[r] & E^0(BA \times B\Sigma_{p^{k-1}}^{\times p}) \times (E^0(BA) \otimes_{E^0} E^0(B\Sigma_{p^{k}})/I_{\tr})  \ar[d] \\  \Cl_n(A \times \Sigma_{p^k}, C_0) \ar@{^{(}->}[r] & \Cl_n(A \times \Sigma_{p^{k-1}}^{\times p}, C_0) \times (\Cl_n(A,C_0) \otimes_{C_0} \Cl_n(\Sigma_{p^k}, C_0)/I_{\tr}),}
\]
where both of the horizontal arrows are the product of the restriction and quotient maps and the bottom horizontal arrow is an injection by Lemma \ref{lem3}.

Now Diagram \eqref{powerdiagram} and Lemma \ref{powerdiagram2} imply that it is suffices to prove that the two diagrams
\[
\xymatrix{E^0(BA) \ar[rr]^-{\Delta^*(P_{p^{k-1}}^{\times p})} \ar[d] & & E^0(BA) \otimes_{E^0} E^0(B\Sigma_{p^{k-1}}^{\times p}) \ar[d] \\ \Cl_n(A,C_0) \ar[rr]^-{\Delta^*((P_{p^{k-1}}^{\phi})^{\times p})} & & \Cl_n(A,C_0) \otimes_{C_0} \Cl_n(\Sigma_{p^{k-1}}^{\times p},C_0)}
\]
and
\[
\xymatrix{E^0(BA) \ar[r]^-{P_{p^k}/I_{\tr}} \ar[d] & E^0(BA) \otimes_{E^0} E^0(B\Sigma_{p^k})/I_{\tr} \ar[d] \\ \Cl_n(A,C_0) \ar[r]^-{P_{p^k}^{\phi}/I_{\tr}} & \Cl_n(A,C_0) \otimes_{C_0} \Cl_n(\Sigma_{p^k},C_0)/I_{\tr}}
\]
commute. The first commutes by the induction hypothesis and the second commutes by Proposition \ref{AHSchar}.

\section{Loose ends}
Although the author prefers burnt ends to loose ends, in this final section we will try to wrap up some loose ends. We will describe the restriction of $P_{m}^{\phi}$ to $\Aut(\qz)$-invariant class functions, we will discuss the relationship between the power operations $P_{m}^{\phi}$ and the stabilizer group action, and we will describe some of the changes necessary to produce a total power operation on class functions.

In order to show that $\Aut(\qz)$-invariant class functions are sent to $\Aut(\qz)$-invariant class functions by $P_{m}^{\phi}$, we need the following lemma that describes the $\Aut(\qz)$-action induced on $\Sum_m(\qz)$ through the isomorphism of Proposition \ref{conjiso}.

In order to show that $\Aut(\qz)$-invariant class functions are sent to $\Aut(\qz)$-invariant class functions by $P_{m}^{\phi}$, we need to understand the $\Aut(\qz)$-action induced on $\Sum_m(\qz)$ through the isomorphism of Proposition \ref{conjiso}. The automorphism $\gamma \in \Aut(\qz)$ acts on $\hom(\lat,\Sigma_m)_{/ \sim}$ by $\gamma \cdot [\al] = [\gamma^* \al]$. The corresponding action of $\Aut(\qz)$ on $\Sum_{m}(\qz)$ through the isomorphism of Proposition \ref{conjiso} is given by
\[
\gamma \cdot \oplus_i H_i = \oplus_i \gamma H_i.
\]
To see this, recall from the proof of Proposition \ref{conjiso} that $H_i = (\lat/\lat_i)^* \subset \qz$, where $\lat_i$ is the stabilizer of the transitive $\lat$-set $\um_i$. The stabilizer of the $\lat$-set structure on $\um_i$ given by acting through $\gamma^*$ is $(\gamma^*)^{-1}\lat_i$ and there is a canonical isomorphism
\[
(\lat/(\gamma^*)^{-1}\lat_i)^* \cong \gamma((\lat/\lat_i)^*) = \gamma H_i.
\]

\begin{prop} \label{invariants}
For any section $\phi \in \Gamma(\Sub(\qz), \Isog(\qz))$, the power operation 
\[
P_{m}^{\phi} \colon \Cl_n(G,C_0) \to \Cl_n(G \times \Sigma_m,C_0)
\]
sends $\Aut(\qz)$-invariant class functions to $\Aut(\qz)$-invariant class functions and the resulting map
\[
\Cl_n(G,C_0)^{\Aut(\qz)} \to \Cl_n(G \times \Sigma_m,C_0)^{\Aut(\qz)}
\]
is independent of the choice of $\phi$.
\end{prop}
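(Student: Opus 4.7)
The plan is to check both claims by direct pointwise computation on a class $([\al], \oplus_i H_i) \in \hom(\lat, G)_{/\sim} \times \Sum_m(\qz)$. The inputs are the right action of $\Aut(\qz)$ on $\Cl_n(-,C_0)$ from Section \ref{sec:characters}, namely $(g\gamma)([\beta]) = g([\beta\gamma^*])\gamma$, together with the formula just derived for the induced action on pairs: $\gamma$ sends $([\al], \oplus_i H_i)$ to $([\al\gamma^*], \oplus_i \gamma H_i)$. For $f \in \Cl_n(G,C_0)^{\Aut(\qz)}$, invariance is equivalent to $f([\al\gamma^*]) = f([\al])\gamma^{-1}$ for all $\gamma$ and $\al$.

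The key algebraic observation is that for every $\gamma \in \Aut(\qz)$ and every finite $H \subset \qz$, the isogenies $\phi_{\gamma H}\gamma$ and $\phi_H$ both have kernel $H$, so they differ by a unique automorphism $\delta_{H,\gamma} \in \Aut(\qz)$ with $\phi_{\gamma H}\gamma = \delta_{H,\gamma}\phi_H$. Dualizing yields $\gamma^*\phi_{\gamma H}^* = \phi_H^*\delta_{H,\gamma}^*$. To verify invariance of $P_m^\phi(f)$, I would expand
\[
(P_{m}^{\phi}(f)\gamma)([\al],\oplus_i H_i) = \Bigl(\prod_i f([\al\gamma^*\phi_{\gamma H_i}^*])\phi_{\gamma H_i}\Bigr)\gamma,
\]
substitute the dual identity, and use invariance of $f$ to absorb each $\delta_{H_i,\gamma}^*$ from the argument of $f$ into a trailing $\delta_{H_i,\gamma}^{-1}$ acting on $C_0$. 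Combined with the identity $\delta_{H_i,\gamma}^{-1}\phi_{\gamma H_i} = \phi_{H_i}\gamma^{-1}$ (obtained by left-multiplying the defining relation for $\delta_{H_i,\gamma}$ by its inverse), the $i$-th factor simplifies to $f([\al\phi_{H_i}^*])\phi_{H_i}\gamma^{-1}$. Since $\Aut(\qz)$ acts on $C_0$ by ring maps, the $\gamma^{-1}$'s pull out of the product and cancel the trailing $\gamma$, leaving $P_{m}^{\phi}(f)([\al], \oplus_i H_i)$.

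For independence of $\phi$, given two sections $\phi, \phi' \in \Gamma(\Sub(\qz), \Isog(\qz))$, for each finite $H \subset \qz$ there is a unique $a_H \in \Aut(\qz)$ with $\phi_H' = a_H\phi_H$. Then $(\phi'_{H_i})^* = \phi_{H_i}^*a_{H_i}^*$, so by invariance of $f$ the $i$-th factor of $P_{m}^{\phi'}(f)([\al], \oplus_i H_i)$ rewrites as $f([\al\phi_{H_i}^*])a_{H_i}^{-1}a_{H_i}\phi_{H_i} = f([\al\phi_{H_i}^*])\phi_{H_i}$, agreeing factorwise with $P_{m}^{\phi}(f)([\al], \oplus_i H_i)$.

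I expect the main thing to keep careful track of is the interaction of the right $\Aut(\qz)$-action on $C_0$ with the multiplication in $C_0$ and with composition of isogenies; the crucial point, used twice, is that the action is by ring maps, so automorphism actions commute with products and can be pulled freely through $\prod_i$.
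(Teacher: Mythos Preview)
Your proposal is correct and follows essentially the same approach as the paper's proof: both arguments hinge on the observation that $\phi_{\gamma H}\gamma$ and $\phi_H$ differ by an automorphism (your $\delta_{H,\gamma}$, the paper's $\sigma_i$), and both then use invariance of $f$ to absorb that automorphism. The only cosmetic difference is that the paper distributes the trailing $\gamma$ into the product first and uses invariance in the form $f([\beta\sigma^*])\sigma = f([\beta])$, whereas you keep $\gamma$ outside and use the equivalent form $f([\beta\sigma^*]) = f([\beta])\sigma^{-1}$, cancelling at the end; the independence-of-$\phi$ argument is identical in both.
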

\begin{proof}
This is a calculation. Recall that if $f \in \Cl_n(G, C_0)$ and $\gamma \in \Aut(\qz)$, then
\[
(f \cdot \gamma)([\al]) =  (f([\al \gamma^*]))\gamma.
\]
Let $f \in \Cl_n(G,C_0)^{\Aut(\qz)} $, let $\phi$ be a section, and let $\gamma \in \Aut(\qz)$, then
\begin{align*}
(P_{m}^{\phi}(f) \cdot \gamma)([\al],\oplus H_i) &= P_{m}^{\phi}(f)(\gamma \cdot ([\al],\oplus H_i))\gamma \\
&=  P_{m}^{\phi}(f)( ([\al  \gamma^*],\oplus \gamma H_i))\gamma\\
&=  \big (\Prod{i} f([\al  \gamma^* \phi_{\gamma H_i}^*])\phi_{\gamma H_i} \big ) \gamma\\
&= \Prod{i} \big ( f([\al  \gamma^* \phi_{\gamma H_i}^*])\phi_{\gamma H_i}  \gamma \big ).
\end{align*}
Now notice that, for each subgroup $H_i$, there is an automorphism $\sigma_i$ making the following diagram
\[
\xymatrix{\qz \ar[r]^{\gamma} \ar[d]_{\phi_{H_i}} & \qz \ar[d]^{\phi_{\gamma H_i}} \\ \qz \ar[r]^{\sigma_i} & \qz}
\]
commute. This implies that $\phi_{\gamma H_i} \gamma = \sigma_i \phi_{H_i}$. Applying this to the last equality gives
\begin{align*}
\Prod{i} \big ( f([\al  \gamma^* \phi_{\gamma H_i}^*])\phi_{\gamma H_i}  \gamma \big ) &= \Prod{i} \big ( f([\al  \phi_{H_i}^* \sigma_{i}^*]) \sigma_i \phi_{H_i} \big ) \\
&= \Prod{i}\big ( f([\al  \phi_{H_i}^*])  \phi_{H_i} \big ) \\
&= P_{m}^{\phi}(f)([\al], \oplus H_i),
\end{align*}
where the second equality follows from the fact that $f$ is $\Aut(\qz)$-invariant.

Now assume that $\phi, \xi \in \Gamma(\Sub(\qz), \Isog(\qz))$ are two sections. Then for each $H \subset \qz$ there exists $\sigma_H \in \Aut(\qz)$ such that $\xi_H = \sigma_H \phi_H$. Now assume $f \in \Cl_n(G,C_0)^{\Aut(\qz)}$, then
\begin{align*}
P_{m}^{\xi}(f)([\al],\oplus H_i) &= \Prod{i} f([\al \xi_{H_i}^*]) \xi_{H_i} \\
&= \Prod{i} f([\al \phi_{H_i}^* \sigma_{H_i}^*]) \sigma_{H_i} \phi_{H_i} \\
&= \Prod{i} f([\al \phi_{H_i}^*])\phi_{H_i}\\
&= P_{m}^{\phi}(f)([\al],\oplus H_i).
\end{align*}
This proves that the resulting power operation
\[
\Cl_n(G,C_0)^{\Aut(\qz)} \to \Cl_n(G \times \Sigma_m,C_0)^{\Aut(\qz)}
\]
is independent of the choice of section.
\end{proof}

Theorem \ref{thmc} states that there is an isomorphism
\[
\Q \otimes E^0(BG) \cong \Cl_n(G,C_0)^{\Aut(\qz)}.
\]
Thus we have produced a ``rational power operation"
\[
P_{m}^{\Q} \colon \Q \otimes E^0(BG) \to \Q \otimes E^0(BG \times B\Sigma_m)
\]
and, at the same time, given a formula for it.

\begin{prop}
For any $\phi  \in \Gamma(\Sub(\qz), \Isog(\qz))$, the diagonal action of the stabilizer group $\Aut(\F/\ka)$ on generalized class functions commutes with the power operation $P_{m}^{\phi}$. That is, for $s \in \Aut(\F/\ka)$, there is a commutative diagram
\[
\xymatrix{\Cl_n(G,C_0) \ar[r]^-{P_{m}^{\phi}} \ar[d]_{s} & \Cl_n(G \times \Sigma_m,C_0) \ar[d]^{s} \\ \Cl_n(G,C_0) \ar[r]^-{P_{m}^{\phi}} & \Cl_n(G \times \Sigma_m,C_0).}
\]
\end{prop}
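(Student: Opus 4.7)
The plan is to unwind the definition of both sides and reduce the identity to a statement about the compatibility of the $\Aut(\F/\ka)$-action and the $\Isog(\qz)$-action on $C_0$. The diagonal action of $s \in \Aut(\F/\ka)$ on $\Cl_n(G, C_0)$ is defined pointwise by $(s \cdot f)([\al]) = s \cdot f([\al])$, where the action on $C_0$ is induced from the action on $\Level(\qz, \Gu)$ described in \eqref{stabaction}. Because $s$ acts on $C_0$ by ring homomorphisms, expanding both sides of the desired equation at a conjugacy class $([\al], \oplus_i H_i)$ reduces the claim to verifying, for each summand and each $c = f([\al \phi_{H_i}^*]) \in C_0$, the identity $(s \cdot c)\phi_{H_i} = s \cdot (c \phi_{H_i})$. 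Thus the claim amounts to showing that the left action of $\Aut(\F/\ka)$ on $C_0$ commutes with the right action of $\Isog(\qz)$ on $C_0$.

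Both actions on $C_0$ are obtained by passing to functions on $\Level(\qz, \Gu)$ and then rationalizing, so it suffices to check that the two corresponding actions on the moduli problem $\Level(\qz, \Gu)$ commute. I will compute both $s \cdot (\phi_H \cdot (l, i, \tau))$ and $\phi_H \cdot (s \cdot (l, i, \tau))$ directly using \eqref{stabaction} and \eqref{isogaction}. Both results take the form $(l/H \circ \psi_H^{-1},\, i \circ \sigma^k,\, ?)$, with the first two coordinates unaffected by $s$, so the computation reduces to the equality $((i\sigma^k)^* s)(\tau/H) = ((i^* s)\tau)/H$ between two isomorphisms $\pi^*(\G/H) \to i^*(\sigma^k)^*\F$.

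The key tool is the naturality of the relative Frobenius applied to $s \in \Aut(\F/\ka)$: the universal-property construction in \eqref{frob} yields the identity $\Frob^k \circ s = ((\sigma^k)^* s) \circ \Frob^k$. Base changing along $i$ and combining with the defining property of $\tau/H$, namely $(i^*\Frob^k)\tau = (\tau/H)(\pi^*q)$, gives
\[
(((i^*s)\tau)/H)(\pi^*q) = (i^*\Frob^k)(i^*s)\tau = ((i\sigma^k)^*s)(i^*\Frob^k)\tau = ((i\sigma^k)^*s)(\tau/H)(\pi^*q).
\]
Since $\pi^*q$ is an epimorphism of formal groups, the uniqueness clause in the first isomorphism theorem then forces $((i^*s)\tau)/H = ((i\sigma^k)^*s)(\tau/H)$, as required.

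I expect the main obstacle to be purely bookkeeping: one has to track several base changes ($i^*$, $(\sigma^k)^*$, and their composite $(i\sigma^k)^*$) and keep straight which actions are on the left versus on the right. The conceptual content is clean, however: Frobenius is canonical enough to commute with automorphisms of $\F/\ka$ up to the obvious pullback, and this is precisely what makes the stabilizer action on the target of $\tau$ compatible with the quotient construction that defines the isogeny action.
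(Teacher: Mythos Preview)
Your proposal is correct and follows essentially the same route as the paper: reduce to the commutation of the $\Aut(\F/\ka)$- and $\Isog(\qz)$-actions on $C_0$, pass to the moduli problem $\Level(\qz,\Gu)$, and observe that everything comes down to the identity $((i\sigma^k)^*s)(\tau/H) = ((i^*s)\tau)/H$. The only cosmetic difference is in the final step: you invoke naturality of the relative Frobenius via its universal property and cancel the epimorphism $\pi^*q$, whereas the paper checks the same Frobenius square by choosing a coordinate and computing with power series; both arguments establish the same fact.
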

\begin{proof}
Recall that $\Aut(\F/\ka)$ acts on $C_0$ on the right through ring maps and that $\Isog(\qz)$ acts on $C_0$ on the right through ring maps. Tracing through the diagram in the proposition, we want to show that, for $f \in \Cl_n(G,C_0)$ and $s \in \Aut(\F/\ka)$,
\[
\Prod{i} f([\al \phi_{H_i}^*])s \phi_{H_i} = \Prod{i} f([\al \phi_{H_i}^*]) \phi_{H_i}s.
\]
Thus it suffices to show that the actions of $\Aut(\F/\ka)$ and $\Isog(\qz)$ on $C_0$ commute. Since $C_0 = \Q \otimes \Sect_{\Level(\qz,\Gu)}$, it suffices to show that the actions commute on $\Sect_{\Level(\qz,\Gu)}$ and this is a question about moduli problems over Lubin-Tate space. 

Recall the formulas of Equations \eqref{stabaction} and \eqref{isogaction}. Since $\Aut(\F,\ka)$ does not affect the data of $\G$ or $i$ in a deformation, it suffices to show that
\[
((i \sigma^k)^*s)(\tau/H)  = (i^*s \tau)/H
\]
and this is a statement about isogenies of formal groups over $\ka$. Unwrapping this equality, we want to show that the diagram
\[
\xymatrix{ \pi^* \G \ar[r]^-{\tau} \ar[d] & i^*\F \ar[r]^-{i^*s} \ar[d]_{i^*\Frob^k} & i^*\F  \ar[d]^{i^*\Frob^k} \\ (\pi^* \G)/H \ar[r]_-{\tau/H} & (i \sigma^k)^* \F \ar[r]_-{(i \sigma^k)^*s} & (i \sigma^k)^* \F  }
\]
commutes. The left hand square commutes by construction and the right hand square is $i^*$ applied to the square
\[
\xymatrix{\F \ar[r]^-{s} \ar[d]_-{\Frob^k} & \F \ar[d]^-{\Frob^k} \\ (\sigma^k)^*\F \ar[r]^-{(\sigma^k)^*s} & (\sigma^k)^*\F.}
\]
It is not hard to see that this commutes. Choose a coordinate so that we have formal group laws $x +_{\F} y$ and $x+_{(\sigma^k)^*\F} y$ and automorphisms of formal group laws $f_s(x)$ and $f_{(\sigma^k)^*s}(x)$ such that
\[
(x +_{\F} y)^{p^k} = x^{p^k}+_{(\sigma^k)^*\F} y^{p^k}
\]
and
\[
(f_s(x))^{p^k} = f_{(\sigma^k)^*s}(x^{p^k}).
\]
Then the induced diagram of $\ka$-algebras
\[
\xymatrix{\ka \powser{x} & \ka \powser{x} \ar[l] \\ \ka \powser{x} \ar[u] & \ka \powser{x} \ar[l] \ar[u]}
\]
sends
\[
\xymatrix{f_{(\sigma^k)^*s}(x^{p^k}) = (f_{s}(x))^{p^k} & x^{p^k} \ar@{|->}[l] \\ f_{(\sigma^k)^*s}(x) \ar@{|->}[u] & x \ar@{|->}[l] \ar@{|->}[u]}
\]
and thus commutes.
%
\end{proof}

Finally, we describe a few of the changes necessary to construct a total power operation, rather than just a power operation, on generalized class functions and prove that it is compatible with the total power operation on Morava $E$-theory. Just as in the case of $P_{m}^{\phi}$, the construction of the total power operation on generalized class functions
\[
\Po_{m}^{\phi} \colon \Cl_n(G,C_0) \to \Cl_n(G \wr \Sigma_m,C_0)
\]
depends on the choice of a section $\phi \in \Gamma(\Sub(\qz), \Isog(\qz))$. 

Recall Diagram \eqref{psidiagram}, it plays a more important role in the definition of $\Po_{m}^{\phi}$. For $H \subset \qz$, let $\lat_H = (\qz/H)^*$ so that we have an inclusion $q_{H}^* \colon \lat_H \hookrightarrow \lat$. 

Consider the set
\[
\Sum_m(\qz,G) = \{\oplus_i(H_i, [\al_i])| H_i \subset \qz, \sum_i|H_i|=m, \text{ and } [\al_i \colon \lat_{H_i} \to G]\}.
\]
When $G = e$, this is $\Sum_m(\qz)$. Proposition \ref{conjiso} generalizes to give the following proposition.
\begin{prop}
There is a canonical bijection
\[
\hom(\lat, G \wr \Sigma_m)_{/\sim} \cong \Sum_m(\qz,G).
\]
\end{prop}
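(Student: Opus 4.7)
The plan is to generalize the proof of Proposition \ref{conjiso}. The key observation is that the wreath product $G \wr \Sigma_m$ is the automorphism group of the free right $G$-set of rank $m$, namely $\um \times G$ with $G$ acting on the second factor by right translation. Consequently, a homomorphism $\al \colon \lat \to G \wr \Sigma_m$ is the same data as a left $\lat$-action on $\um \times G$ commuting with the right $G$-action, and two such homomorphisms are conjugate if and only if the corresponding $(\lat, G)$-bisets are isomorphic. Thus $\hom(\lat, G \wr \Sigma_m)_{/\sim}$ is in canonical bijection with isomorphism classes of $(\lat,G)$-bisets whose underlying right $G$-set is free of rank $m$.

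Given such a biset $X$, I would first pass to the quotient $X/G \cong \um$ by the right $G$-action; this inherits a left $\lat$-action. Decompose $\um = \coprod_i \um_i$ into transitive $\lat$-orbits and let $X_i \subset X$ be the preimage of $\um_i$, so each $X_i$ is a transitive $\lat$-equivariant principal $G$-bundle over $\um_i$. Choose a basepoint $x_i \in \um_i$, let $\lat_i \subseteq \lat$ denote its stabilizer (well-defined since $\lat$ is abelian), and choose a trivialization of the fiber of $X_i$ over $x_i$. Then $\lat_i$ acts on that fiber through $G$-equivariant automorphisms, which are given by left multiplication by elements of $G$; this produces a homomorphism $\al_i \colon \lat_i \to G$. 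Changing either the basepoint within $\um_i$ or the trivialization replaces $\al_i$ by a $G$-conjugate, so the conjugacy class $[\al_i]$ is well-defined.

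Setting $H_i = (\lat/\lat_i)^* \subset \qz$ as in the proof of Proposition \ref{conjiso}, Pontryagin duality identifies $\lat_i$ with the annihilator of $H_i$ in $\lat$, which is exactly $(\qz/H_i)^* = \lat_{H_i}$. So $[\al_i]$ may be viewed as a conjugacy class $\lat_{H_i} \to G$, and assembling this data yields an element $\oplus_i (H_i, [\al_i]) \in \Sum_m(\qz,G)$; the size condition $\sum_i |H_i| = \sum_i |\lat/\lat_i| = \sum_i |\um_i| = m$ is automatic. For the inverse, given $\oplus_i (H_i, [\al_i])$, I would form the $(\lat, G)$-biset $\coprod_i \lat \times_{\lat_{H_i}} G$, where $\lat_{H_i}$ acts on the left of $G$ through $\al_i$ and on the right of $\lat$ by translation. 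This has $\sum_i [\lat : \lat_{H_i}] = \sum_i |H_i| = m$ right $G$-orbits, and the isomorphism class depends only on the conjugacy class $[\al_i]$.

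The main obstacle, and the step I expect to require the most care, is verifying that these two constructions are mutually inverse while tracking precisely the equivalence relations on each side. Concretely, one must check that the classification of transitive $\lat$-equivariant $G$-torsors over a transitive $\lat$-set with stabilizer $\lat_i$ is exactly $\hom(\lat_i, G)/\text{conjugation}$ (independent of basepoint and trivialization), and that the decomposition into $\lat$-orbits is compatible with the formal direct sum $\oplus_i$ up to reordering of the summands, so that the resulting data lands in $\Sum_m(\qz, G)$ as defined rather than under some finer or coarser equivalence. Once this bookkeeping is settled, the bijection follows from the $G$-equivariant orbit-stabilizer theorem applied one $\lat$-orbit at a time, exactly parallel to the argument in Proposition \ref{conjiso}.
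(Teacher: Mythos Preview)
Your proposal is correct and is precisely the generalization the paper has in mind: the paper itself gives no detailed proof of this proposition, only remarking that Proposition~\ref{conjiso} generalizes to yield it. Your biset interpretation of $G \wr \Sigma_m$ as $\Aut_G(\um \times G)$ and the ensuing orbit-by-orbit analysis is the natural way to carry out that generalization, and the bookkeeping you flag (well-definedness of $[\al_i]$ up to conjugacy, identification $\lat_i = \lat_{H_i}$ via Pontryagin duality, and matching the formal-sum equivalence) is exactly what needs to be checked.
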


Using this proposition, we define $\Po_{m}^{\phi}$ as follows: 
\[
\Po_{m}^{\phi}(f)(\oplus_i(H_i,[\al_i])) = \Prod{i} f([\al_i \psi_{H_i}^{*}])\phi_{H_i}
\]
for $f \in \Cl_n(G,C_0)$.

The main result of \cite{cottpo} is the following:
\begin{thm} \label{main2} \cite[Theorem 9.1]{cottpo}
Let $\phi \in \Gamma(\Sub(\qz), \Isog(\qz))$ and let $G$ be a finite group. There is a commutative diagram 
\[
\xymatrix{E^0(BG) \ar[r]^-{\Po_m} \ar[d]_{\chi} & E^0(BG \wr \Sigma_m) \ar[d]^{\chi} \\ \Cl_n(G,C_0) \ar[r]^-{\Po_{m}^{\phi}} & \Cl_n(G \wr \Sigma_m, C_0).}
\]
\end{thm}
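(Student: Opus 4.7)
The plan is to adapt the three-step argument for Theorem \ref{mainthm}---reduce to $m = p^k$, reduce to abelian $G$, and induct on $k$---from the $BG \times B\Sigma_m$ setting to the wreath-product setting of $BG \wr \Sigma_m$. Each step requires upgrading a combinatorial ingredient from products to wreath products; the substantive new input appears only in the inductive base case, where we need a wreath-product generalization of Strickland's theorem.

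For Step 1, the bijection $\hom(\lat, G \wr \Sigma_m)_{/\sim} \cong \Sum_m(\qz, G)$ combined with a wreath-product analogue of Corollary \ref{padicsum} gives an injection
\[
\Cl_n(G \wr \Sigma_m, C_0) \hookrightarrow \bigotimes_j \Cl_n(G \wr \Sigma_{p^j}, C_0)^{\otimes a_j}
\]
from restriction along $G \wr \prod_j \Sigma_{p^j}^{\times a_j} \subseteq G \wr \Sigma_m$. The wreath-product form of \cite[VIII.1.1]{bmms}, using the identification $G \wr (\Sigma_i \times \Sigma_j) \cong (G \wr \Sigma_i) \times (G \wr \Sigma_j)$, expresses the restriction of $\Po_m$ in terms of $\Po_i$ and $\Po_j$, and the analogous identity for $\Po_m^\phi$ is immediate from its defining formula. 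For Step 2, because every $\al_i \colon \lat_{H_i} \to G$ appearing in a class in $\Sum_{p^k}(\qz, G)$ has finite abelian $p$-image, restriction along the inclusions $A \wr \Sigma_{p^k} \subseteq G \wr \Sigma_{p^k}$ for $A \subseteq G$ abelian yields an injection $\Cl_n(G \wr \Sigma_{p^k}, C_0) \hookrightarrow \prod_A \Cl_n(A \wr \Sigma_{p^k}, C_0)$, and the cube diagram chase from Theorem \ref{mainthm}'s proof handles the reduction. Step 3 inducts on $k$ with base case $k = 0$ following the same moduli-theoretic argument that handled $P_1^\phi$ in the proof of Theorem \ref{mainthm}, and with inductive step built on the wreath-product analogue of Lemma \ref{lem3}, namely the injection $\Cl_n(A \wr \Sigma_{p^k}, C_0) \hookrightarrow \Cl_n((A \wr \Sigma_{p^{k-1}})^{\times p}, C_0) \times \Cl_n(A \wr \Sigma_{p^k}, C_0)/J_{\tr}$; compatibility on the first factor then follows from the induction hypothesis applied to $\Po_{p^{k-1}}$ together with the identification $A \wr \Sigma_{p^{k-1}}^{\times p} \cong (A \wr \Sigma_{p^{k-1}})^{\times p}$.

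The hard part is compatibility on the second factor $\Cl_n(A \wr \Sigma_{p^k}, C_0)/J_{\tr}$---the wreath-product analogue of Proposition \ref{AHSchar}. This is where the proof departs from the present paper: it requires a generalization of Strickland's Theorem \ref{strickland} identifying $E^0(BA \wr \Sigma_{p^k})/J_{\tr}$ with the ring of functions on the fibre product $\Sub_{p^k}(\Gu) \times_{\LT} \Hom(A^*, \Gu)$---precisely the result of \cite{genstrickland} that the simplified exposition here has avoided. Granting this identification, a wreath-product analogue of Theorem \ref{ahs} realizes $\Po_{p^k}/J_{\tr}$ on the moduli-problem side as the map sending $(H \subset \G,\ f \colon A^* \to \G,\ i,\ \tau)$ to $(A^* \lra{f} \G \to \G/H,\ i\sigma^k,\ \tau/H)$. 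With this moduli-theoretic description of the total power operation, the final compatibility is a diagram chase on $\LT$ moduli problems directly parallel to the proof of Proposition \ref{AHSrestricted}, with Diagram \eqref{psidiagram} applied to rewrite $(l/H) \circ \psi_H^{-1} \circ \phi_H \circ \al^* = (l/H) \circ q_H \circ \al^*$ and so match the two composites.
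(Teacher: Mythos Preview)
Your three-step outline matches the paper's own treatment almost verbatim: the paper does not prove Theorem \ref{main2} in detail but sketches exactly the reduction you propose, pointing to \cite[Theorem 8.4]{cottpo} for the wreath-product analogue of Theorem \ref{ahs} and to the main result of \cite{genstrickland} for the generalized Strickland theorem identifying $E^0(BA \wr \Sigma_{p^k})/J_{\tr}$.

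There is one substantive slip in your final paragraph. The scheme $\Sub_{p^k}(\Gu) \times_{\LT} \Hom(A^*, \Gu)$ is the one underlying the \emph{non-wreath} target $E^0(BA) \otimes_{E^0} E^0(B\Sigma_{p^k})/I_{\tr}$, not $E^0(BA \wr \Sigma_{p^k})/J_{\tr}$; these rings are not isomorphic, and the description you give of $\Po_{p^k}/J_{\tr}$ on moduli is literally the description of $P_{p^k}/I_{\tr}$ from Theorem \ref{ahs}. The transitive conjugacy classes in $A \wr \Sigma_{p^k}$ are pairs $(H, [\al \colon \lat_H \to A])$ with $\lat_H = (\qz/H)^*$, so after dualizing one obtains a map $A^* \to \qz/H$ rather than $A^* \to \qz$. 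Accordingly the moduli problem for $E^0(BA \wr \Sigma_{p^k})/J_{\tr}$ must record a subgroup $H \subset \G$ together with a homomorphism from $A^*$ into the \emph{quotient} $\G/H$ (equivalently, in the formulation of \cite{genstrickland}, certain subgroups of $\G$ of larger order equipped with a level structure from $A$). The wreath analogue of Theorem \ref{ahs} in \cite[Theorem 8.4]{cottpo} then has source $\Hom(A^*, \Gu)$ as before but a genuinely different target scheme, and the final diagram chase uses $\psi_H$ to identify $l/H \colon \qz/H \to \G/H$ with a level structure $\qz \to \G/H$, matching the definition of $\Po_{p^k}^{\phi}$ via $[\al_i \psi_{H_i}^*]$ rather than $[\al \phi_{H_i}^*]$. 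Once you correct the moduli description, your argument goes through as stated.
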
 

To prove Theorem \ref{main2}, we need generalizations of the ingredients that were required for the proof of Theorem \ref{mainthm}. We require a generalization of Theorem \ref{ahs} to the additive total power operation (Equation \eqref{additivetotal}) applied to abelian groups:
\[
\Po_m \colon E^0(BA) \to E^0(BA\wr\Sigma_m)/J_{\tr}.
\]
This is provided by \cite[Theorem 8.4]{cottpo}. It makes use of a generalization of Theorem \ref{strickland} to rings of the form $E^0(BA \wr \Sigma_{p^k})/J_{\tr}$, which is the main result of \cite{genstrickland}.

%
%
%
\bibliographystyle{amsalpha}
\bibliography{mybib}



\end{document}